\newtheorem{theorem}{Theorem}
\newtheorem{lemma}{Lemma}
\newtheorem{proposition}{Proposition}
\newtheorem{corollary}{Corollary}
\theoremstyle{definition}
\newtheorem{definition}{Definition}
\newtheorem{remark}{Remark}
\begin{document}

\title{Impact of dissipation ratio on vanishing viscosity solutions of the Riemann problem for chemical flooding model}

\author{F. Bakharev\footnote{Chebyshev Laboratory, St. Petersburg State University, 14th Line 29b, 199178,  Saint-Petersburg, Russia. E-mail: f.bakharev@spbu.ru.},
A. Enin\footnote{Chebyshev Laboratory, St. Petersburg State University, 14th Line 29b, 199178 Saint-Petersburg, Russia. E-mail: a.enin@spbu.ru.},
Yu. Petrova\footnote{Chebyshev Laboratory, St. Petersburg State University, 14th Line 29b, 199178,  Saint-Petersburg, Russia. 
Instituto de Matemática Pura e Aplicada, Estrada Dona Castorina, 110, 
Jardim Botanico | CEP 22460-320, 
Rio de Janeiro, RJ - Brasil. 
E-mail: j.p.petrova@spbu.ru, yulia.petrova@impa.br },
N. Rastegaev\footnote{Leonhard Euler International Mathematical Institute (SPbU Department), 14th Line 29b, 199178, Saint-Petersburg, Russia. E-mail:  n.v.rastegaev@spbu.ru.}}
\renewcommand{\today}{}
\maketitle
\abstract{
The solutions for a Riemann problem arising in chemical flooding models are studied using vanishing viscosity as an admissibility criterion. We show that when the flow function depends non-monotonically on the concentration of chemicals, non-classical undercompressive shocks appear. The monotonic dependence of the shock velocity on the ratio of dissipative coefficients is proven. For that purpose we provide the classification of the phase portraits for the travelling wave dynamical systems and analyze the saddle-saddle connections.

\medskip

\noindent {\bf Keywords. } Travelling waves, conservation laws, chemical flooding, Riemann problem, undercompressive shocks.

\noindent {\bf 2000 Mathematics Subject Classification. } Primary 35L65; secondary 35L67, 76L05.
}

\maketitle

\section{Introduction}
\label{sec:intro}
In this paper we study the non-uniqueness of vanishing viscosity admissible solutions for a Riemann problem for the system of conservation laws ($x\in\mathbb{R}$, $t>0$):
\begin{equation}\label{eq:main_system}
\begin{cases} 
s_t + f(s, c)_x = 0, \\
(cs + a(c))_t + (cf(s,c))_x  = 0,
\end{cases}
\end{equation}
with initial data 
\begin{equation}
\label{eq:Riemann-problem}
    (s,c)(x,0)=
    \begin{cases}
        (s^L,c^L),& \text{if } x\leq 0,\\
        (s^R,c^R),& \text{if } x>0.
    \end{cases}
\end{equation}
This system is often used to describe the displacement of oil in porous media by a hydrodynamically active chemical agent (polymer, surfactant, etc). Thus, the water phase saturation $s=s(x,t)$ and the chemical agent concentration $c=c(x,t)$ are assumed to take values in the interval $[0,1]$. These upper and lower bounds for $s$ and $c$ in fact follow from the structure of the water fractional flow function $f$ (the Buckley–Leverett function~\cite{BL1942}), which is commonly assumed to be $S$-shaped of $s$ for every $c$ (see full list of assumptions (F1)-(F4) in Section \ref{subsec:restrictions}). In this paper for simplicity we study a specific Riemann problem $(s^L, c^L)=(1, 1)$ and $(s^R, c^R)=(0, 0)$ and discuss some possible generalizations in Section \ref{sec:generalisations}. Function $a$ describes the adsorption of the chemical agent and is assumed to be increasing and concave.

Consider the dissipative analogue of system \eqref{eq:main_system}, see e.g.~\cite[Chapter 5]{TheBook}:
\begin{equation}\label{eq:main_system_dissipation}
\begin{cases} 
s_t + f(s, c)_x = \varepsilon_c (A(s,c)s_x)_x, \\
(cs + \alpha)_t + (cf(s,c))_x  = \varepsilon_c (c A(s,c)s_x)_x+\varepsilon_d c_{xx},\\
\varepsilon_r \alpha_t = a(c)-\alpha,
\end{cases}
\end{equation}
where $\varepsilon_c$, $\varepsilon_d$ and $\varepsilon_r$ are the dimensionless capillary pressure, diffusion, and relaxation time, respectively; $A(s,c)$ is the capillary pressure function (bounded and separated from zero) and $\alpha=\alpha(x,t)$ is the dynamic adsorption term. In current paper for simplicity we  will consider the situation with $\varepsilon_d$ or $\varepsilon_r$ equal to zero. Some discussion concerning the case with three non-zero dissipative parameters can be found in Section \ref{subsec_6_2}. Let us note that parameters $\varepsilon_c$, $\varepsilon_d$ and $\varepsilon_r$ are usually assumed to be small because of their nature. These coefficients appear after rescaling the corresponding system in domain of finite size and contain the distance between inlet and outlet in the denominator. Tending this distance to infinity leads us to the Riemann problem and yields simultaneous tending of three coefficients to zero while their ratios remain constant. We say that discontinuities (shocks) in solutions of \eqref{eq:main_system} are admissible if they are the limits of solutions of \eqref{eq:main_system_dissipation} when dimensionless groups $\varepsilon_c$, $\varepsilon_d$, and $\varepsilon_r$ tend to zero. 

The case when fractional flow function $f$ depends monotonically on $c$ was studied in detail by Johansen \& Winther in~\cite{JohansenWinther} using vanishing viscosity criterion for shocks. Note that in the monotone case this admissibility criterion coincides with the other well-known criterion:  the shock is admissible if and only if  the Rankine-Hugoniot, Lax and Oleinik conditions are fullfilled (see \cite[Appendix A]{TheBook}). For non-monotonic dependence of $f$ on chemical agent $c$ these admissibility criteria are not equivalent anymore. Moreover, as it was noticed previously in paper \cite{EntovKerimov} by Entov \& Kerimov, the Lax admissibility criterion gives non-physical solutions, i.e. the agent may have no effect on the flow, which contradicts fundamental expectations (for details see example in Section~\ref{subsec:example}).

The case when flow function $f$ depends non-monotonically on $c$ was the subject of study by Shen~\cite{Shen}. In that work the system \eqref{eq:main_system} was first decoupled in Lagrangian coordinates and then regularized by adding small diffusion terms, so it gives no sufficient answer about physically meaningful viscosity solutions to the initial system. We also wish to attract more attention to the paper of Entov~\&~Kerimov~\cite{EntovKerimov}. 
Both~\cite{EntovKerimov} and~\cite{Shen} notice, that vanishing viscosity admissibility criterion 
gives solutions containing non-classical undercompressive shocks (known also as transitional waves) that depend on the ratio of small coefficients $\varepsilon_c/\varepsilon_d$ in smoothing terms.  The main aim of current paper is to generalize the ideas introduced in~\cite{EntovKerimov} to consider non-equilibrium adsorption and a wider class of non-monotonous functions $f$ and to give mathematically rigorous formulations and proofs to the assertions, which~\cite{EntovKerimov} lacks.

In the last decades there is an extensive research on undercompressive shocks for hyperbolic systems of conservation laws. They violate the standard Lax admissibility criteria and correspond to saddle-to-saddle connection for travelling wave solutions. An important feature of such waves is the sensitivity of the solution to the diffusion/dispersion terms. They appear in various contexts: balance of diffusion and dispersion in elasticity of nonlinear materials and phase transition dynamics (see survey~\cite{LeFloch1999} and book~\cite{LeFloch2002} and references therein, especially bibliographical notes), thin films theory (see e.g.~\cite{Bertozzi1999}, also a recent work on tears of wine~\cite{Bertozzi2020}), oil recovery (e.g. three-phase flow, oil-water-gas~\cite{WAG2001}), flow of pedestrians~\cite{pedestrian2005}. Transitional waves make the structure of the solution to a Riemann problem richer, serving as a bridge which joins the characteristic families (see e.g.~\cite{ScMP1996},~\cite{ELI1990}).

The paper has the following structure. Section~\ref{sec:problem-statement} contains precise assumptions on the system~\eqref{eq:main_system} and review of basic notions needed for construction of the solution to a Riemann problem. Also in~Section~\ref{subsec:example} we consider a simple example when Lax admissibilty criterion is inappropriate.  In Section~\ref{sec:Riemann-solution} we formulate the main result of the paper  (Theorem~\ref{Theorem1}). In Section~\ref{sec:shock-waves} we give detailed description of the dynamical system for travelling wave solutions of~\eqref{eq:main_system_dissipation} and give its detailed phase portrait description. In Section~\ref{sec:shock-admissibility} we give the proof  of the main result. Section~\ref{sec:generalisations} is devoted to discussions and possible generalizations of Theorem~\ref{Theorem1}. In Appendix~\ref{ap:A} we give a proof to some basic properties of trajectories for a travelling wave dynamical system.
\section{Problem statement}
\label{sec:problem-statement}

\subsection{Properties of the functions}
\label{subsec:restrictions} The following assumptions (F1)--(F4) for the fractional flow function $f$ are formulated in a strong form to avoid more technical details in proof (see Fig.\ref{fig:BL_ads}a for an example of function~$f$). Nevertheless, they can be weakened (see Section~\ref{sec:generalisations} for more detailes).
\begin{enumerate}
    \item[(F1)] $f\in C^2([0,1]^2)$; $f(0, c)=0$; $f(1, c)= 1$;
    \item[(F2)] $f_s(s, c)>0$ for $0<s<1$, $0 \leq c \leq 1$;  $f_s(0,c)=f_s(1,c)=0$;
    \item[(F3)] $f$ is $S$-shaped in $s$: for each $c \in [0,1]$ function $f(\cdot,c)$ has a unique point of inflection $s^I =  s^I(c) \in (0, 1)$, such that $f_{ss}(s, c)>0$ for $0<s<s^I$ and $f_{ss}(s, c)<0$ for $s^I<s<1$. 
    \item[(F4)] $f$ is non-monotone in $c$: $\forall s \in (0,1) \, \exists c^*(s) \in (0,1)$:
    \begin{itemize}
        \item $f_c(s, c)<0$ for $0<s<1$, $0 < c < c^*(s)$;
        \item $f_c(s, c)>0$ for $0<s<1$, $c^*(s) < c < 1$.
    \end{itemize}
    Function $c^*(s)$ clearly solves $f_c(s, c^*(s)) = 0$, thus $c^*(s)$ is continuous by the variation of the implicit function theorem.
\end{enumerate}
\begin{figure}[htbp]
    \centering
    \includegraphics[width=0.42\textwidth]{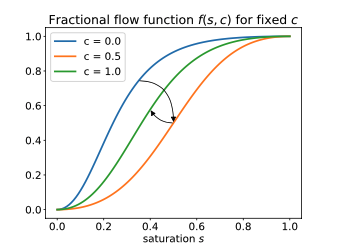}
    \includegraphics[width=0.42\textwidth]{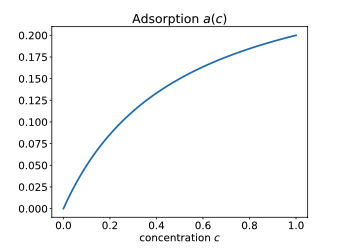}\\
    (a)\qquad\hfil\qquad (b)\hfil
    \caption{(a) example of non-monotonic dependence of $f(s,c)$ on $c$; (b) adsorption function}
    \label{fig:BL_ads}
\end{figure}

The adsorption function $a=a(c)$ is such that (see Fig.\ref{fig:BL_ads}b):
\begin{itemize}
    \item[(A1)] $a \in C^2[0,1]$, $a(0) = 0$;
    \item[(A2)] $a'(c)>0$ for $0<c<1$;
    \item[(A3)] $a''(c)<0$ for $0<c<1$.
\end{itemize}

\subsection{Review of the main notions for the Riemann problem  solution}
\label{subsec:basic-notions}

In this section we will briefly recall the main notions from~\cite{JohansenWinther} required for the construction of the solution to the Riemann problem. The general information can be found in \cite[Chapter~9]{Dafermos}; for more detailed analysis of the system~\eqref{eq:main_system} see~\cite{JohansenWinther}.

\begin{definition}
\textit{A shock wave} between points $u^-=(s^-,c^-)$ and $u^+=(s^+,c^+)$ with velocity $v$ is a discontinuous solution $u=(s,c)$ of~\eqref{eq:main_system}, defined in weak form:
\begin{align*}
    u(x,t)=
    \begin{cases}
        u^-,& x<v t,
        \\
        u^+,& x>v t,
    \end{cases}
\end{align*}
which satisfies the Rankine-Hugoniot (RH) conditions:
\begin{equation}
\label{eq:RH-1}
\begin{split}
    v[s]&=[f(s,c)],
    \\
    v[cs+a(c)]&=[cf(s,c)],
\end{split}
\end{equation}
and some admissibility criteria. Here $[q(s,c)]=q(s^-,c^-)-q(s^+,c^+)$. In this paper we assume vanishing viscosity admissibility criterion, which is explained in detail in Section \ref{sec:Riemann-solution}. \end{definition}
\begin{definition}
 A shock is called a \textit{$c$-shock} if $c^-\not=c^+$ and denoted by $u^- \xrightarrow{c\text{-shock}}u^+$.
\end{definition}

\begin{definition}
 A solution to a Riemann problem~\eqref{eq:main_system}--\eqref{eq:Riemann-problem} is called an \textit{$s$-wave} if $c^L=c^R$ and denoted by $u^L \xrightarrow{s} u^R$.
\end{definition}
For an $s$-wave the concentration $c$ stays constant for all times and the system \eqref{eq:main_system} reduces to a scalar conservation law with a well-known solution~\cite{Lax1957,Gelfand1959}. 
For $s^L>s^R$ the unique solution of the Riemann problem is given by
\begin{align*}
    s(x,t)=
    \begin{cases}
        s^L,& x/t<g(s^L),
        \\
        s, & x/t=g(s),
        \\
        s^R, & x/t>g(s^R),
    \end{cases}
\end{align*}
where $g(s)=\frac{\partial}{\partial s} f_U(s,c)$ and $f_U$ is the upper convex envelope of $f$ with respect to the interval $[s^L,s^R]$. 
The velocity of $s$-wave $u^L \xrightarrow{s} u^R$ at points $u^L$ and $u^R$ is equal to $g(s^L)$ and $g(s^R)$, respectively. 

Consider two waves $u_1\xrightarrow{a} u_2$ and $u_2\xrightarrow{b} u_3$. Let $v_f^a$ denote the final wave speed of the $a$-wave (wave speed at point $u_2$) and $v_i^b$ the initial wave speed of the $b$-wave (wave speed at point $u_2$). If $a$ is a shock wave with velocity $\sigma$, then $v_i^a=v_f^a=\sigma$. 
\begin{definition}
We say that a pair of waves $u_1\xrightarrow{a} u_2$ and $u_2\xrightarrow{b} u_3$ is compatible by speed if their combination $u_1\xrightarrow{a} u_2 \xrightarrow{b} u_3$ solves the Riemann problem with left state $u_1$ and right state $u_3$. Thus the two waves are compatible if and only if $v_f^a\leqslant v_i^b$.
\end{definition}


The solution of any Riemann problem~\eqref{eq:Riemann-problem} consists of a sequence of  waves that are compatible by speed and connect the given left state $u^L$ with the given right state $u^R$ (see \cite[Chapter~9]{Dafermos}).
The following proposition is a straightforward consequence of \cite[Lemma 5.1]{JohansenWinther} for the case $c^L> c^R$ (note that Lemma~5.1 does not assume the monotonicity of $f$  and uses only the concavity of $a$):

\begin{proposition}
\label{prop:solution-RP}
There exists $u^-=(s^-,c^L)$ and $u^{+}=(s^+,c^R)$ such that the solution to the Riemann problem~\eqref{eq:main_system}--\eqref{eq:Riemann-problem} has the following structure:
\begin{align}
\label{eq:solution-RP-1}
    u^L\xrightarrow{s} u^{-} \xrightarrow{c\text{-shock}} u^{+} \xrightarrow{s} u^R.
\end{align}
\end{proposition}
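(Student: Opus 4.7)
The plan is to follow the argument of Johansen--Winther's Lemma~5.1, verifying that its construction goes through under our weaker hypotheses. As already pointed out in the excerpt, that proof uses only the $S$-shape of $f$ in $s$ (F3) and the concavity of the adsorption $a$ (A3), never the monotonicity of $f$ in $c$. The strategy is to exploit the self-similar structure of the Riemann problem to decompose any solution into a sequence of elementary waves, and then to rule out every pattern except the claimed $s$-wave / $c$-shock / $s$-wave triple.

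First I would analyse the two characteristic families. Along the $s$-family the concentration $c$ is constant, while along the $c$-family both $s$ and $c$ vary, with characteristic speed $\lambda_c = f(s,c)/(s+a'(c))$. Any elementary wave is therefore an $s$-wave, a $c$-rarefaction, or a $c$-shock. Applying the Rankine--Hugoniot relations~\eqref{eq:RH-1} to a $c$-shock with $c^- \neq c^+$ and eliminating $v$ gives
\begin{equation*}
v \;=\; \frac{f(s^{-},c^{-})}{s^{-}+A} \;=\; \frac{f(s^{+},c^{+})}{s^{+}+A}, \qquad A \;=\; \frac{a(c^{-})-a(c^{+})}{c^{-}-c^{+}},
\end{equation*}
so the two endpoints of any $c$-shock lie on a common chord through the auxiliary point $(-A,0)$ in the $(s,f)$-plane, taken on their respective fractional-flow curves $f(\cdot,c^{\pm})$.

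Second, I would show that exactly one $c$-wave appears in the solution. Strict concavity of $a$ makes $A$ strictly monotone in $(c^-,c^+)$; this monotonicity, combined with the speed-compatibility requirement $v_f^a \leq v_i^b$, forbids two successive $c$-shocks from being chained together (the would-be intermediate state would violate the monotone ordering of speeds). A $c$-rarefaction is excluded by the analogous monotonicity argument applied to $\lambda_c$ along a $c$-integral curve: concavity of $a$ together with the $S$-shape of $f$ prevents $\lambda_c$ from being monotone on any non-degenerate arc compatible with our boundary data $c^L=1$, $c^R=0$. Consequently the concentration changes across a single $c$-shock, and the constant states flanking it differ from $u^L$ and $u^R$ only in $s$, so each is reached from $u^L$ (resp.\ reaches $u^R$) by an $s$-wave.

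Existence of the intermediate states $u^-=(s^-,c^L)$ and $u^+=(s^+,c^R)$ is then obtained by solving the chord condition above simultaneously with the speed-compatibility conditions with respect to the flanking $s$-waves. Since $A=A(c^L,c^R)$ is determined by the Riemann data, the problem reduces to finding $s^\pm$ on the $S$-shaped curves $f(\cdot,c^L)$ and $f(\cdot,c^R)$ collinear with $(-A,0)$; a standard intermediate-value argument based on the geometry of $S$-shaped graphs yields at least one such pair. The main obstacle I anticipate is the exclusion of $c$-rarefactions in the non-monotone-in-$c$ regime, where the $c$-integral curves no longer admit the transparent picture used in~\cite{JohansenWinther}; this step requires a careful joint use of (F4) and (A3). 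Uniqueness of $s^\pm$ is not asserted here and is in fact what the remainder of the paper addresses via the vanishing-viscosity admissibility criterion.
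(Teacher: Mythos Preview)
The paper itself offers no proof of this proposition beyond citing \cite[Lemma~5.1]{JohansenWinther} and noting that that lemma uses only the concavity of $a$, not the monotonicity of $f$ in~$c$. Your plan to reconstruct the Johansen--Winther argument is therefore appropriate, and your derivation of the chord condition through the point $(-d_1,0)$ (your $A$ is the paper's $d_1$; beware the clash with the capillary coefficient $A(s,c)$) is correct.

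There is, however, a genuine misconception in your sketch. You identify the exclusion of $c$-rarefactions as ``the main obstacle'' and propose to treat it via a ``careful joint use of (F4) and (A3),'' worrying that the non-monotone dependence on $c$ spoils the Johansen--Winther picture. It does not: neither (F3) nor (F4) enters this step, which is precisely the content of the paper's parenthetical remark. Parametrising the $c$-integral curve by~$c$ and using $\lambda_c = f/(s+a'(c))$ with eigenvector $r_c \propto (f_c,\lambda_c-f_s)$, one finds
\[
\frac{d\lambda_c}{dc}\bigg|_{\text{integral curve}}
= \frac{\nabla\lambda_c\cdot r_c}{\lambda_c - f_s}
= \frac{-\lambda_c\, a''(c)}{s+a'(c)} > 0,
\]
so $\lambda_c$ is strictly \emph{increasing} in $c$ along the integral curve whenever $a''<0$, independently of any hypothesis on $f$. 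A $c$-rarefaction joining $c^L>c^R$ would require $\lambda_c$ to increase as $c$ decreases, which is impossible. Your statement that concavity ``prevents $\lambda_c$ from being monotone'' is thus backwards: $\lambda_c$ \emph{is} monotone, in the direction that forces a shock. The same computation disposes of your anticipated difficulty in the non-monotone regime, and the remainder of your outline (single $c$-shock, flanking $s$-waves, existence of the collinear pair $s^\pm$) goes through as in Johansen--Winther without further obstruction.
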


The main consequence of the  Proposition~\ref{prop:solution-RP} in our case is that the construction of the solution to the Riemann problem~\eqref{eq:main_system}--\eqref{eq:Riemann-problem} is reduced to finding an ``appropriate'' $c$-shock wave, i.e. the states $u^-$, $u^+$ and the velocity of the shock $v$. There exist many admissibility criteria for shocks (see e.g. \cite[Chapter 8]{Dafermos}). As we demonstrate in Section~\ref{subsec:example} below, not all of them give physically meaningful solutions.

\subsection{Motivating example}
\label{subsec:example}

Consider the simplest model with non-monotone flow function (we call it the ``boomerang'' model): the fractional flow function $f$ decreases in $c$ from $c=0$ up to some value $c^M\in(0,1)$, and then increases from $c^M$ to $c=1$ back to the same function, i.e. $f(s,1)=f(s,0)$. For example (Fig.~\ref{fig:BL_RS_Lax}a),
\begin{align*}
    f(s,c) = \frac{s^2}{s^2+\mu(c)(1-s)^2}, \qquad \mu(c)=1+4c(1-c), \qquad c^M=0.5.
\end{align*}
In the monotone case the celebrated Lax admissibility condition~\cite[\S8.3]{Dafermos}, alongside with conditions (F1)--(F3) and (A1)--(A3), is equivalent to the vanishing viscosity criterion for the dissipative system \eqref{eq:main_system_dissipation} (with $\varepsilon_d=0$), for details see~\cite[Appendix A]{TheBook}.


The straightforward application of Lax criterion for the ``boomerang'' model gives the solution~$s$ that doesn't reflect any change of flow function~$f$ in the interval $c\in(0,1)$. As we can see from Fig.~\ref{fig:BL_RS_Lax}b the solution $s=s(x,t)$ coincides with the solution of scalar conservation law as if polymer concentration doesn't change, which contradicts the physical intuition. This observation motivates us to step back from the Lax admissibility condition and turn to a more physically appropriate admissibility criterion, i.e. vanishing viscosity criterion. 

\begin{figure}[H]
    \centering
    \includegraphics[width=0.42\textwidth]{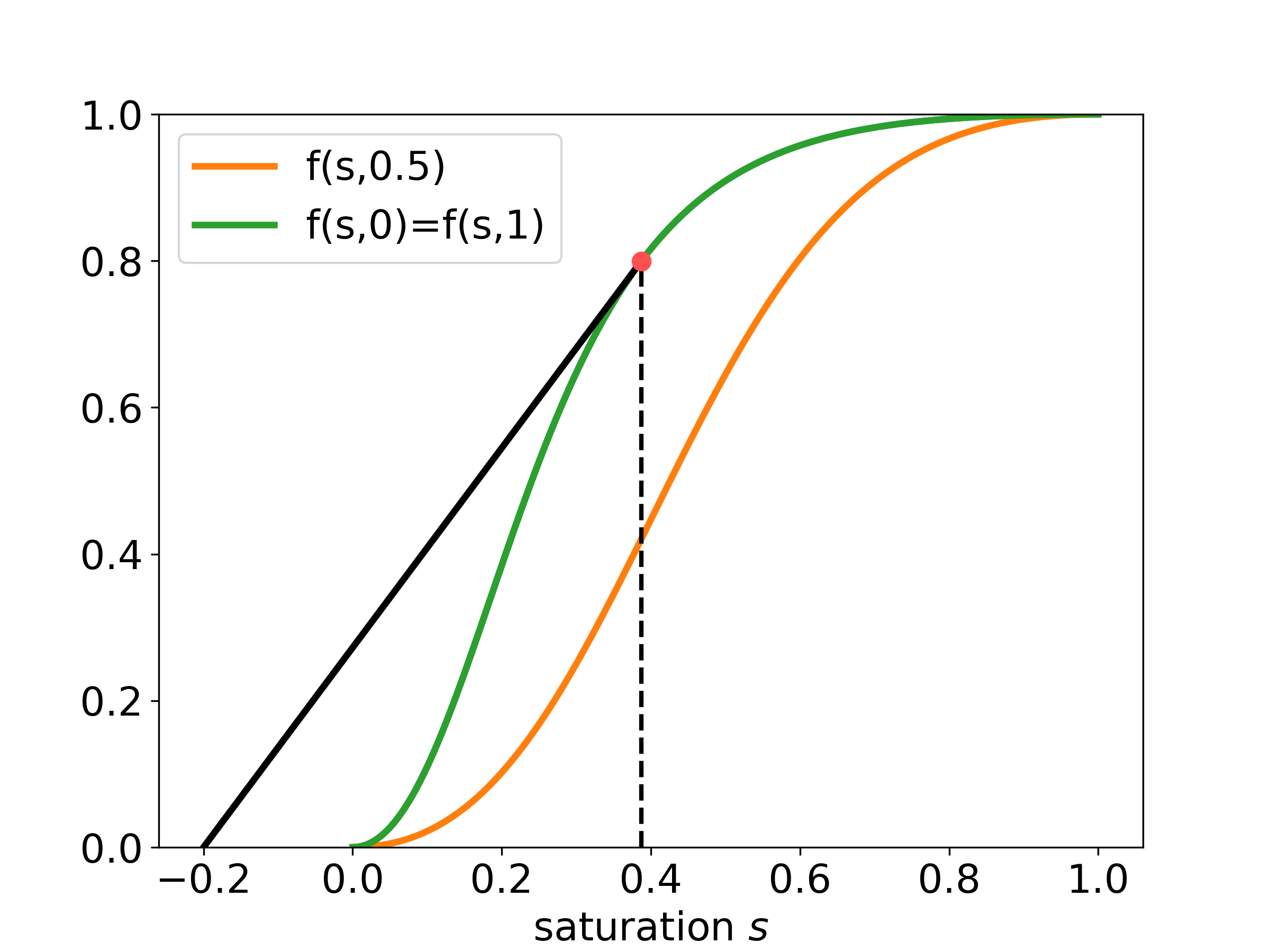}
    \includegraphics[width=0.42\textwidth]{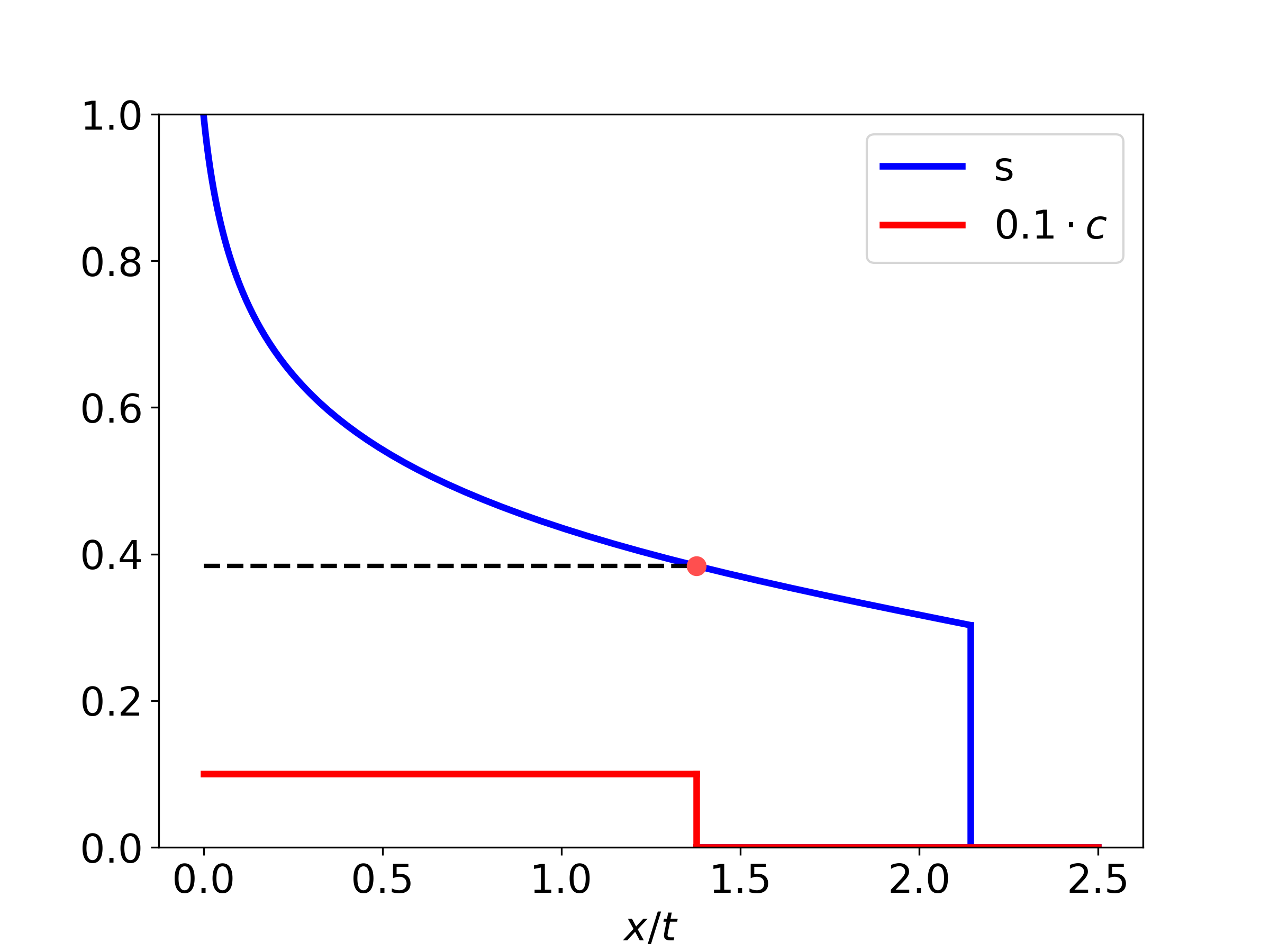}\\
    (a)\qquad\hfil \qquad(b)\hfil
    \caption{``boomerang'' model: (a) $(s,f)$ diagram; 
    (b) solution to a Riemann problem for Lax admissibility criterion}
    \label{fig:BL_RS_Lax}
\end{figure}

\section{Formulation of the main result}
\label{sec:Riemann-solution}
Let us recall (see Proposition~\ref{prop:solution-RP}) that  for the case $c^L>c^R$ the solution to a Riemann problem contains only one $c$-shock. We use the vanishing viscosity criterion and study the travelling wave solutions in two particular cases of \eqref{eq:main_system_dissipation}: 
\begin{itemize}
    \item a system with capillary term and non-equilibrium adsorption ($\varepsilon_d=0$ and $\kappa=\varepsilon_r/\varepsilon_c$ is fixed)
\begin{equation}\label{eq:main_system_smooth_cap_non-eq}
\begin{cases} 
s_t + f(s, c)_x = \varepsilon_c (A(s,c) s_x)_x, \\
(cs + \alpha)_t + (cf(s,c))_x  = \varepsilon_c (cA(s,c) s_x)_x, \\ 
\varepsilon_r \alpha_t = a(c) - \alpha;
\end{cases}
\end{equation}
    \item a system with capillary and diffusion terms ($\varepsilon_r=0$ and $\kappa=\varepsilon_d/\varepsilon_c$ is fixed)
\begin{equation}\label{eq:main_system_smooth_cap_diff}
\begin{cases} 
s_t + f(s, c)_x = \varepsilon_c (A(s,c) s_x)_x, \\
(cs + a(c))_t + (cf(s,c))_x  = \varepsilon_c (cA(s,c) s_x)_x + \varepsilon_d c_{xx}.
\end{cases}
\end{equation}   
\end{itemize}

 Below we will consider the system \eqref{eq:main_system_smooth_cap_non-eq}. However, our proofs work with minimal corrections for the system \eqref{eq:main_system_smooth_cap_diff}.

\begin{definition}
We call a $c$-shock \textbf{admissible} if it could be obtained as a limit of smooth travelling wave solutions of \eqref{eq:main_system_smooth_cap_non-eq} as $\varepsilon_{c,r} \to 0$. 
\end{definition}

The following Theorem is the main result of the paper:
\begin{theorem}\label{Theorem1}
Consider a system of conservation laws~\eqref{eq:main_system} and the dissipative system~\eqref{eq:main_system_smooth_cap_non-eq} under assumptions (F1)--(F4) and (A1)--(A3).
There exist $0<v_{\min}<v_{\max}<\infty$, such that  for every $\kappa=\varepsilon_r/\varepsilon_c\in(0, +\infty)$, there exist unique 
\begin{itemize}
    \item points $s^-(\kappa)\in[0,1]$ and $s^+(\kappa)\in[0,1]$;
    \item velocity $v(\kappa)\in[v_{\min},v_{\max}]$,
\end{itemize}
such that the $c$-shock wave, connecting $u^-(\kappa)=(s^-(\kappa),1)$ and $u^+(\kappa)=(s^+(\kappa),0)$ with velocity $v(\kappa)$, is admissible and compatible by speeds in a sequence of waves \eqref{eq:solution-RP-1}. Moreover, $v$ is monotone in $\kappa$ and continuous; $v(\kappa)\to v_{\min}$ as $\kappa \to \infty$; $v(\kappa)\to v_{\max}$ as $\kappa \to 0$.
\end{theorem}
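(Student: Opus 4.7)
The plan is to realize each admissible $c$-shock as a heteroclinic orbit of the traveling-wave system obtained from \eqref{eq:main_system_smooth_cap_non-eq}. I set $(s, c, \alpha)(x, t) = (S, C, \mathcal{A})(\xi)$ with $\xi = (x - vt)/\varepsilon_c$, integrate the two conservation equations once from $-\infty$ using the asymptotic states $(s^-, 1, a(1))$ and $(s^+, 0, 0)$, and read off the Rankine-Hugoniot relations
\begin{equation*}
v(s^- - s^+) = f(s^-, 1) - f(s^+, 0), \qquad v(s^- + a(1)) = f(s^-, 1),
\end{equation*}
as equality of integration constants. What remains is an autonomous ODE in $\xi$ in which $\kappa$ enters only through the rescaled relaxation equation $-\kappa v\, \mathcal{A}' = a(C) - \mathcal{A}$. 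The RH system has two equations in three unknowns, leaving a one-parameter family of candidate shocks which I parametrize by $v$ over an interval $[v_{\min}, v_{\max}]$ whose endpoints are dictated by $s^\pm \in [0,1]$ and by compatibility by speed with the adjacent $s$-waves of Proposition~\ref{prop:solution-RP}.

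\textbf{Existence via a shooting argument.} The phase portrait of Section~\ref{sec:shock-waves} identifies the endpoints $P^\pm = (s^\pm, c^\pm, a(c^\pm))$ as saddles with one-dimensional unstable/stable manifolds directed into the physical region for each $v$ in the relevant range. For fixed $\kappa$ I would introduce a shooting function $\Phi(v, \kappa)$ equal to the signed distance, in a suitable transversal section, between the forward orbit on the physical branch of $W^u(P^-)$ and the backward orbit on the physical branch of $W^s(P^+)$. Continuous dependence of invariant manifolds on parameters makes $\Phi$ continuous. At $v = v_{\min}$ and $v = v_{\max}$ one of the two saddles degenerates (merging with a boundary equilibrium, or the RH curve exiting the admissible region), which forces opposite signs of $\Phi$ at the endpoints. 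The intermediate value theorem then produces, for every $\kappa \in (0, \infty)$, a velocity $v(\kappa)$ giving an admissible saddle-to-saddle connection.

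\textbf{Monotonicity, uniqueness, continuity and limits.} The central difficulty is establishing that $v(\kappa)$ is both unique and strictly monotone decreasing. I would prove this by showing that $\partial_v \Phi$ and $\partial_\kappa \Phi$ carry definite, opposite signs on the level set $\{\Phi = 0\}$. The first follows from transversality of $W^u(P^-)$ and $W^s(P^+)$ at their intersection — a generic condition in a one-parameter family — which also delivers continuity of $v(\kappa)$ via the implicit function theorem. The sign of $\partial_\kappa \Phi$ is the main obstacle I anticipate: as $\kappa$ grows, the lag of $\mathcal{A}$ behind its equilibrium value $a(C)$ grows, and turning this intuition into a signed perturbation of the heteroclinic trajectory requires a careful comparison estimate between orbits at different $\kappa$ (for instance, Gronwall-type control on the deviation $\mathcal{A} - a(C)$ along the slow manifold), since the flow is genuinely three-dimensional and no obvious closed-form Melnikov integral is available.

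\textbf{Singular limits.} The boundary values $v(\kappa) \to v_{\max}$ as $\kappa \to 0$ and $v(\kappa) \to v_{\min}$ as $\kappa \to \infty$ are identified by passing to the two singular limits of the relaxation equation: $\kappa \to 0$ enforces the algebraic constraint $\mathcal{A} = a(C)$, reducing the dynamics to the equilibrium-adsorption planar traveling-wave system of~\cite{JohansenWinther}, whose saddle connection sits precisely at $v = v_{\max}$; $\kappa \to \infty$ freezes $\mathcal{A}$ on the inner scale, after which a matched-asymptotics argument selects the connection at $v = v_{\min}$. Combined with strict monotonicity and continuity, these limits exhaust $(v_{\min}, v_{\max})$ under the map $\kappa \mapsto v(\kappa)$, completing the theorem.
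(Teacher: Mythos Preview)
Your outline contains a genuine structural gap: the traveling-wave system for \eqref{eq:main_system_smooth_cap_non-eq} is \emph{not} three-dimensional. After integrating the first two conservation equations you obtain, in your notation,
\[
-vS + f(S,C) = A(S,C)S' + vd_1, \qquad -vCS - v\mathcal{A} + Cf(S,C) = C A(S,C) S' + v d_2,
\]
and subtracting $C$ times the first from the second yields the algebraic relation $\mathcal{A} = d_1 C - d_2$. This kills $\mathcal{A}$ as a dynamical variable and collapses the problem to the planar system
\[
A(s,c)\,s_\xi = f(s,c) - v(s+d_1), \qquad \kappa\, c_\xi = (v d_1)^{-1}\bigl(d_1 c - d_2 - a(c)\bigr),
\]
in which the second equation is decoupled from $s$. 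Your remark that ``the flow is genuinely three-dimensional and no obvious closed-form Melnikov integral is available'' is therefore misplaced, and the Gronwall-type argument you anticipate needing is unnecessary.

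Once the system is planar, the paper's route diverges from yours in two further respects. First, the paper parametrizes by $v$ rather than by $\kappa$: for each $v$ in the interval $(v_{\min},v_{\max})$ (defined concretely as the range over which the nullcline picture is of ``Type~II'') it proves existence and uniqueness of a $\kappa(v)$ giving a saddle--saddle connection, and then inverts. Your plan to fix $\kappa$ and shoot in $v$ never pins down what $v_{\min}$ and $v_{\max}$ actually are; in the paper they are the velocities at which the Type~II phase portrait first appears and first disappears, and your claim that ``one of the two saddles degenerates'' at both endpoints is incorrect at $v_{\min}$ (both saddles persist there; what changes is the nullcline topology). Second, monotonicity and uniqueness in the paper come from an elementary observation: the slope $s_\xi/c_\xi$, wherever positive, is strictly increasing in both $\kappa$ and $v$. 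Combined with the fact that any saddle-to-saddle orbit has positive slope throughout, this immediately rules out two distinct connecting orbits at different $\kappa$ (or different $v$) by a crossing argument. No transversality hypothesis or Melnikov computation is invoked, and the implicit function theorem is used only for continuity. Your proposed route through $\partial_v\Phi$, $\partial_\kappa\Phi$ would require you to prove transversality rather than assume it, and you acknowledge this as the main obstacle; the paper sidesteps it entirely via the planar reduction.
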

\begin{remark}
\label{rm:velocity-admiss}
    If a $c$-shock wave from $u^-$ to $u^+$ with velocity $v$ is compatible by speed in a sequence of waves~\eqref{eq:solution-RP-1} (with both $s$-waves present), then the following inequalities are satisfied:
    \begin{equation}
    \label{eq:compatibility-of-speeds}
        f_s(u^-) \leq v \leq f_s(u^+).
    \end{equation}
    Indeed, if the $s$-waves in a sequence~\eqref{eq:solution-RP-1} are smooth solutions (rarefaction waves), then~\eqref{eq:compatibility-of-speeds} is obtained by definition. If the $s$-waves are shock waves, then~\eqref{eq:compatibility-of-speeds} is a consequence of Oleinik admissibility condition for scalar conservation laws.
\end{remark}

\textbf{Example: ``boomerang'' model with vanishing viscosity criterion}

For every fixed $\kappa$ Theorem \ref{Theorem1} provides us with $v(\kappa)$, $u^-(\kappa)$ and $u^+(\kappa)$. Although the proof does not give an analytical expression for $v(\kappa)$, we can numerically calculate this function  and construct a solution to a Riemann problem for any given $\kappa$ (see Fig.~\ref{fig:BL_RS_vanvisc}abc). As $\kappa\to0$ we have $v(\kappa)\to v_{\max}$ and the solution tends to the solution obtained by Lax admissibility criterion.

\begin{figure}[H]
    \centering
    \includegraphics[width=0.31\textwidth]{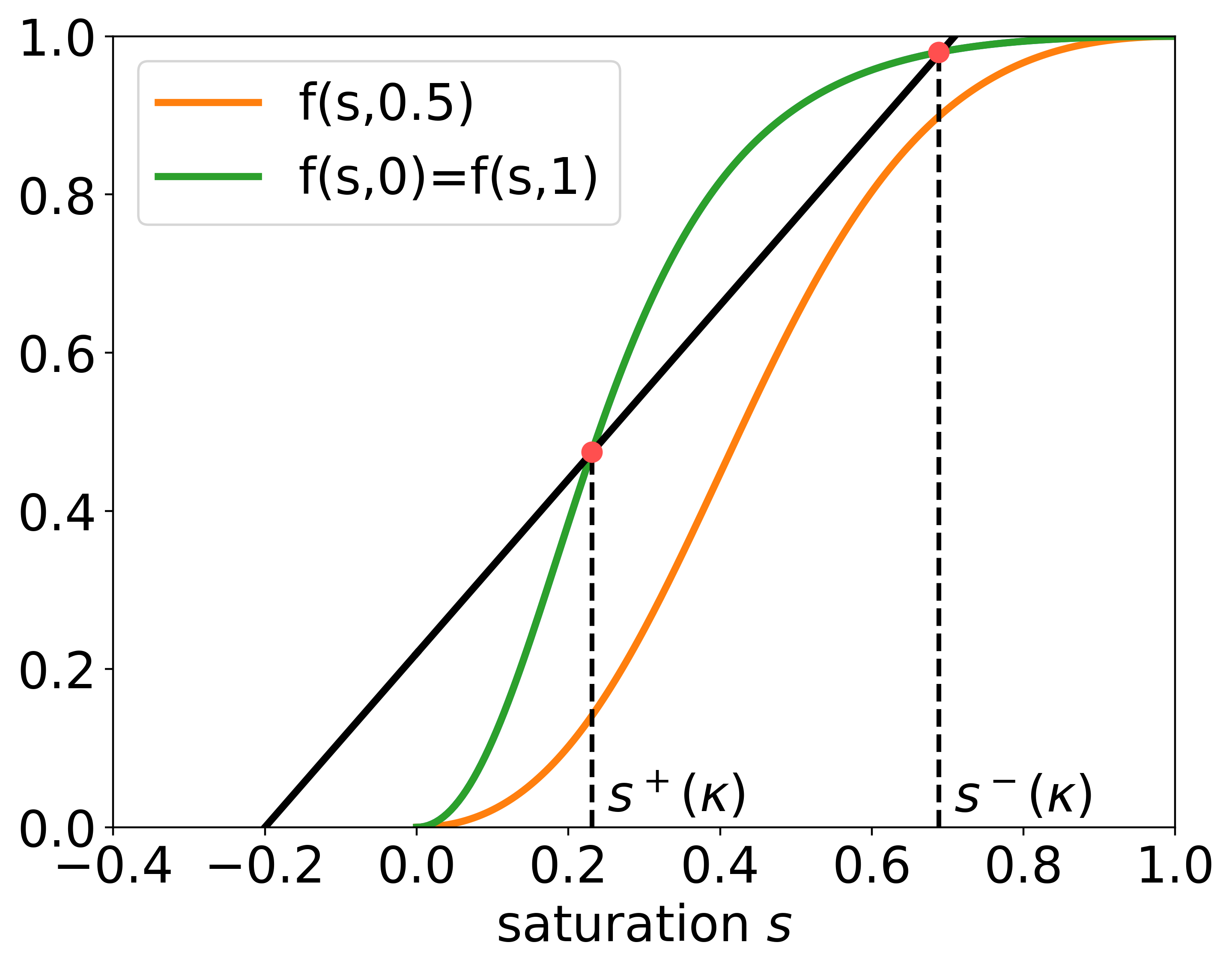}
   \includegraphics[width=0.31\textwidth]{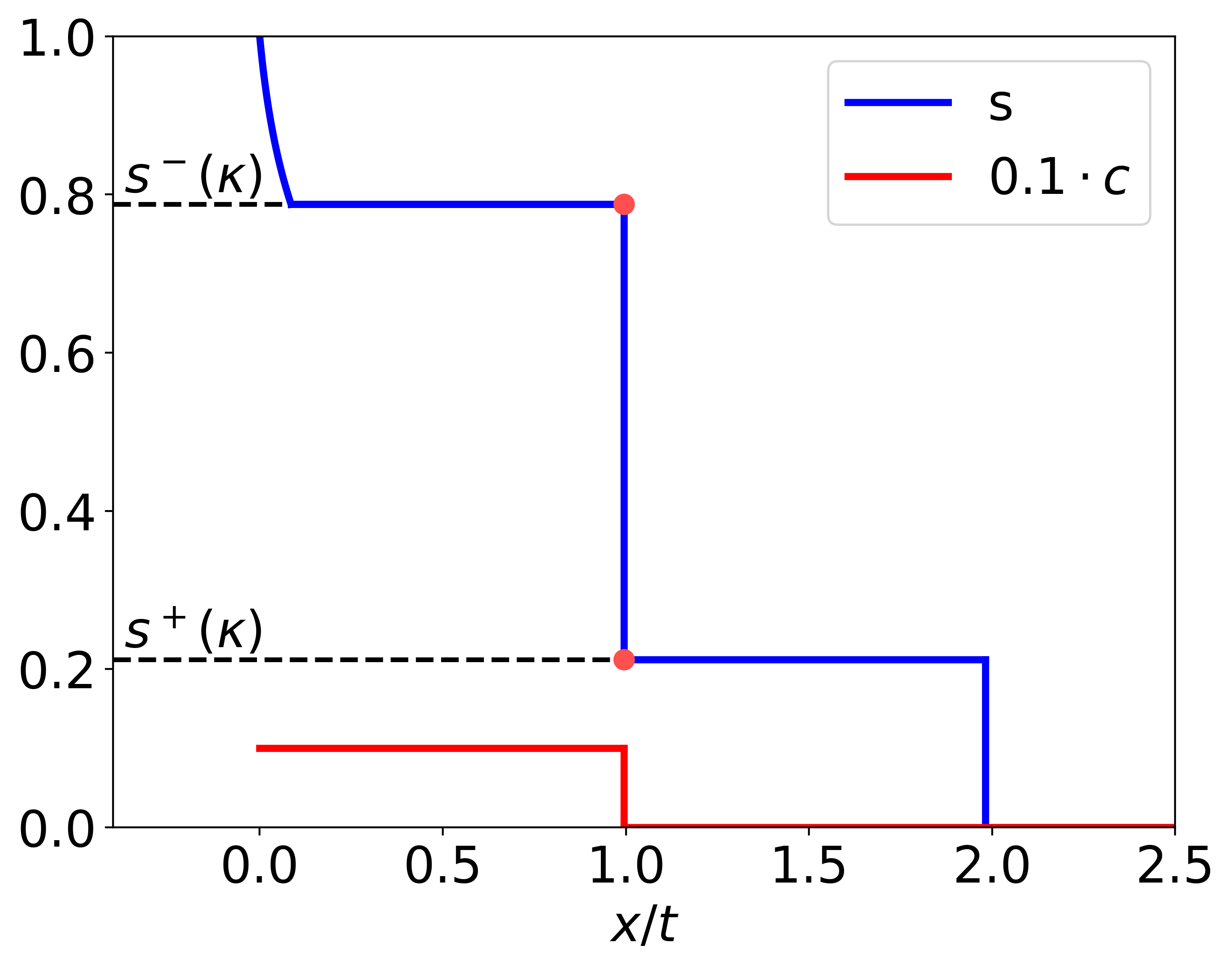}
   \includegraphics[width=0.327\textwidth]{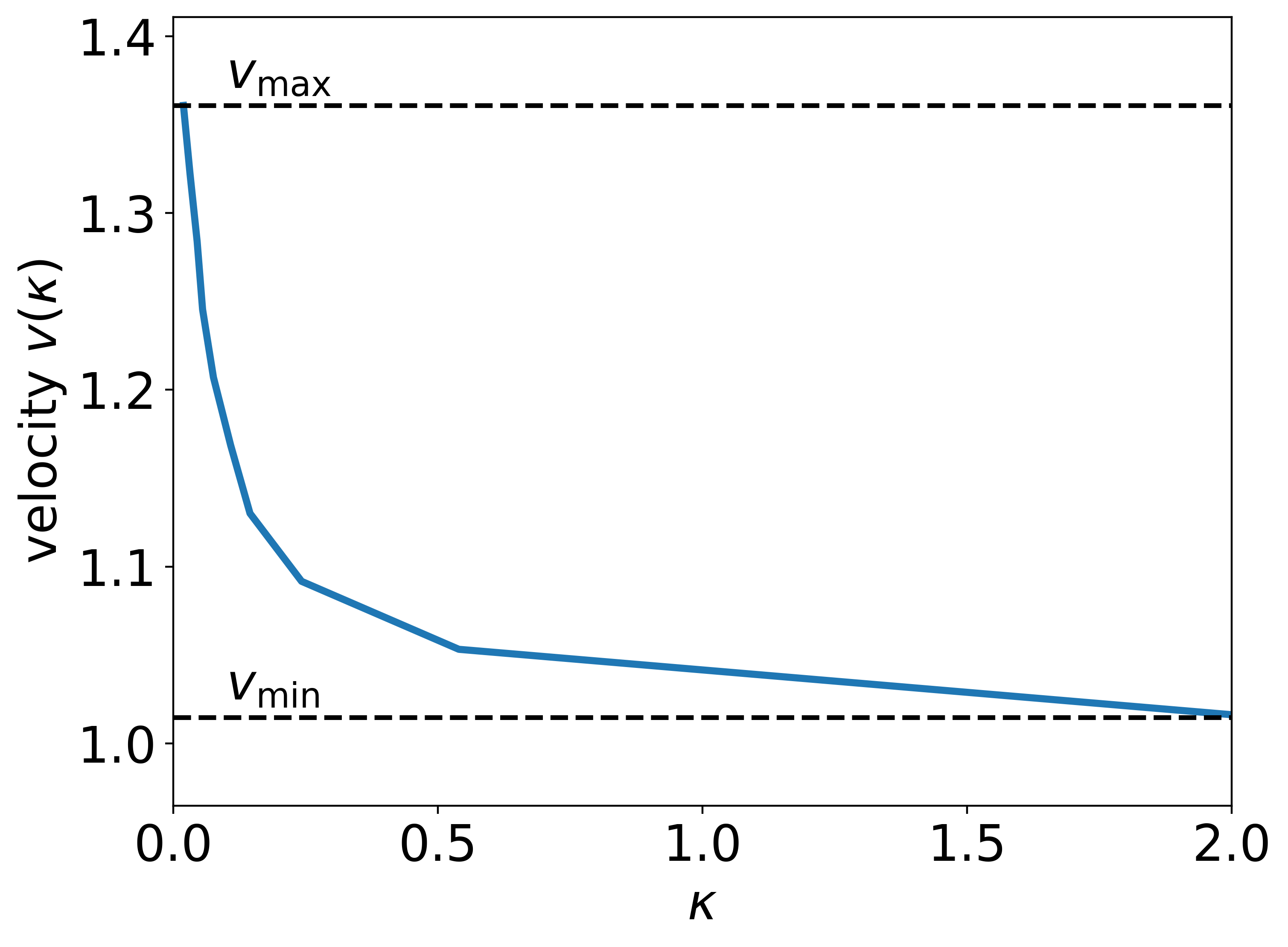}
    \\
    (a)\qquad\qquad\qquad\hfil 
    (b)\hfil\qquad\qquad\qquad 
    (c)\hfil
    \caption{``boomerang'' model: (a) $(s,f)$ diagram; 
    (b) solution to a Riemann problem for vanishing viscosity criterion  for some fixed $\kappa$; (c) graph of dependence $v$ on $\kappa$}
    \label{fig:BL_RS_vanvisc}
\end{figure}

\begin{figure}[H]
    \centering
    \qquad\qquad$\kappa=2$
    \hfill
    $\kappa=0.1$
    \hfill
    $\kappa=0.05$
    \hfill
    $\kappa=0$
    \qquad\qquad
    \qquad
    \hfill
    \\
    \includegraphics[width=0.244\textwidth]{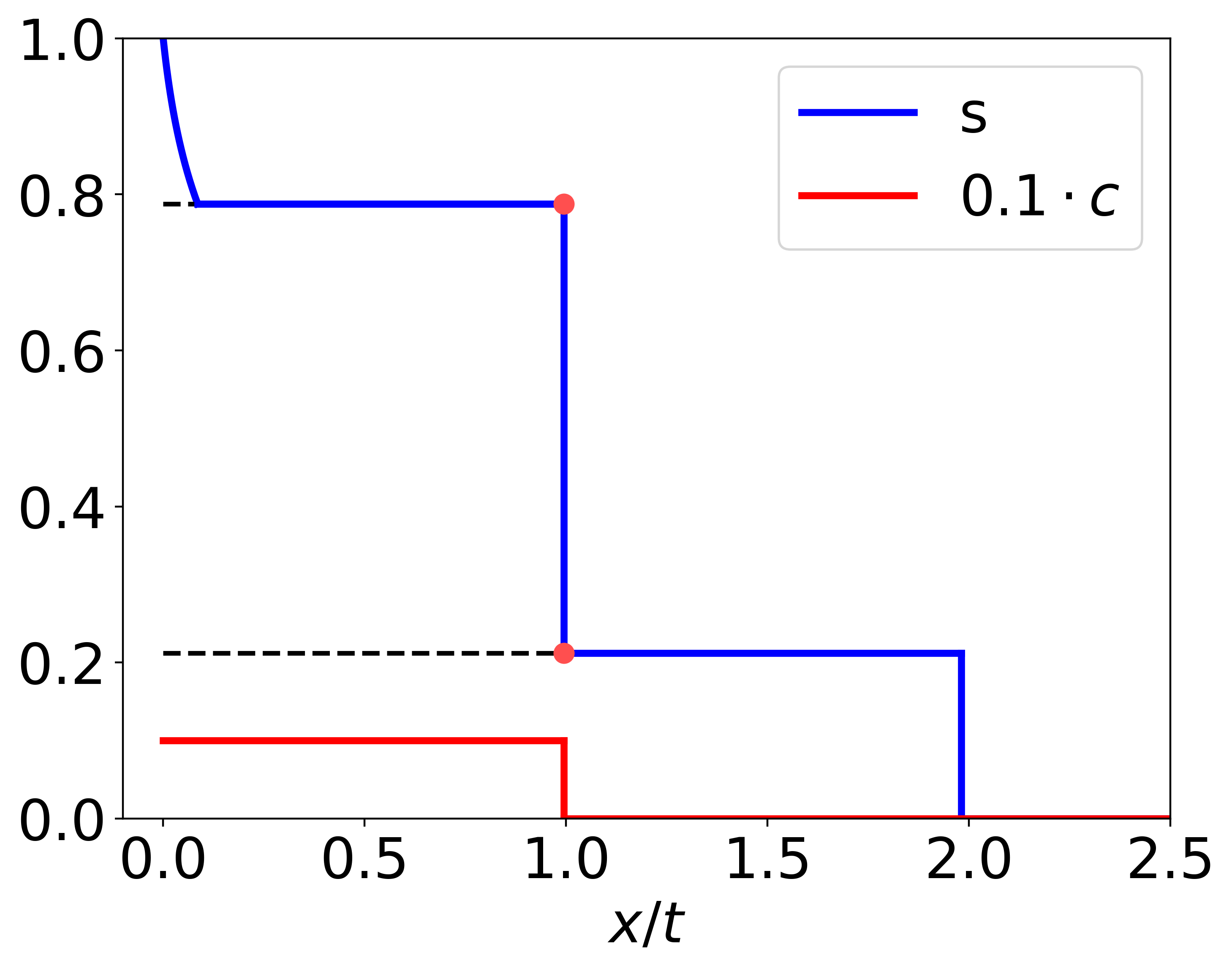}
   \includegraphics[width=0.244\textwidth]{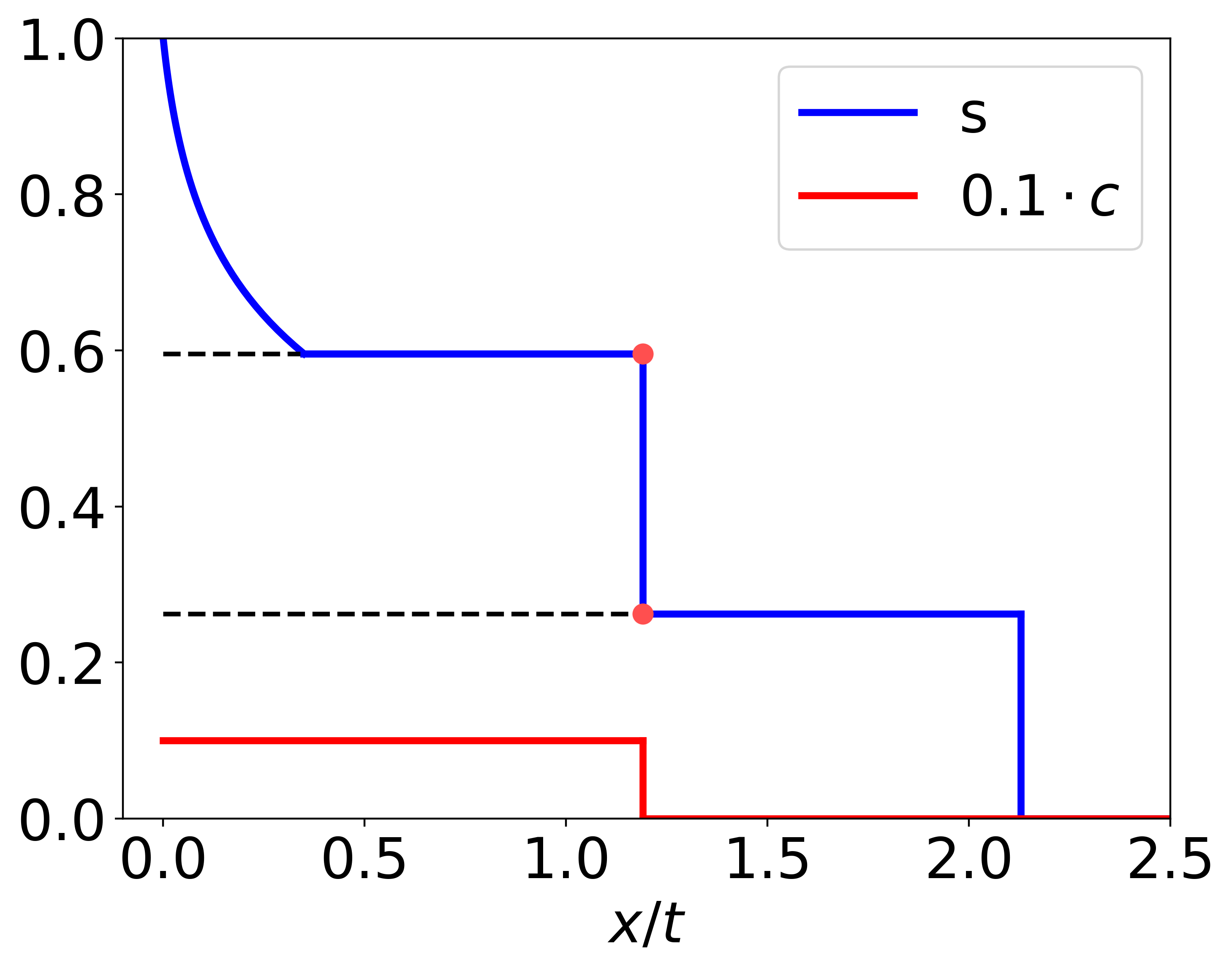}
   \includegraphics[width=0.244\textwidth]{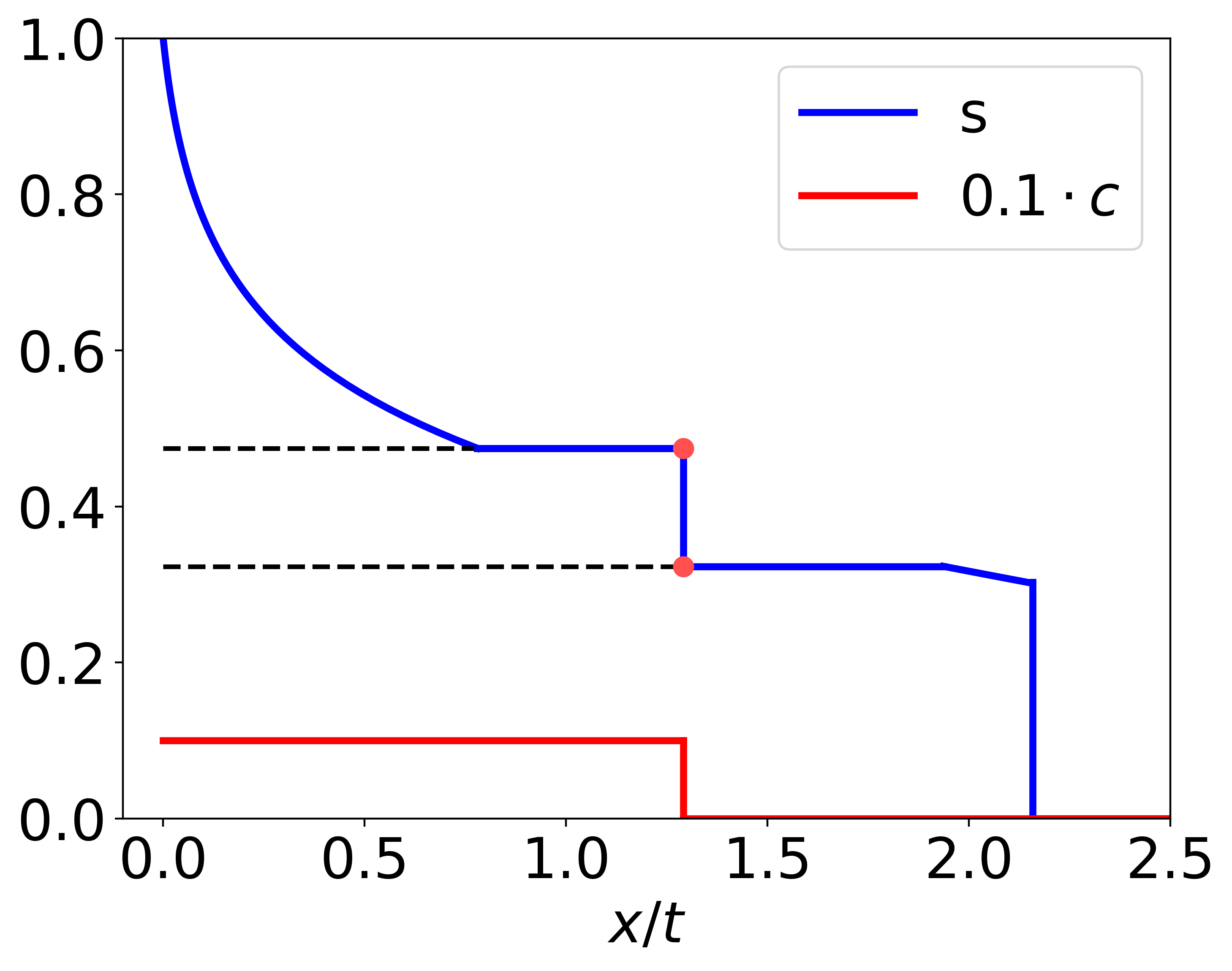}
   \includegraphics[width=0.244\textwidth]{RP_boomerang_Lax.png}
    \caption{solutions $(s,c)$ for different values of $\kappa\in\{2,0.1, 0.05, 0\}$}
    \label{fig:RP_boomerang_multiple}
\end{figure}
\section{Dynamical system for travelling wave solutions}
\label{sec:shock-waves}
\subsection{Dynamical system derivation}
\label{traveling_waves}

In this subsection we are looking for a travelling wave solution of the system \eqref{eq:main_system_smooth_cap_non-eq} connecting $(s^-, c^-)$ and $(s^+, c^+)$, i.e.
\begin{align*}
&\xi  = \varepsilon_c^{-1} (x - vt),
& &s(x,t) = s(\xi), & &c(x,t) = c(\xi), & & \alpha(x,t) = \alpha(\xi), \\
& & &s(\pm\infty) = s^\pm, & &c(\pm\infty)  = c^\pm, & &\alpha(\pm\infty) = a(c^\pm).
\end{align*}
In the above notation system \eqref{eq:main_system_smooth_cap_non-eq} can be rewritten as
\begin{equation*}
\begin{cases} 
-v s_\xi + f(s, c)_\xi = (A(s,c) s_\xi)_\xi, \\
-v (cs + \alpha)_\xi + (cf(s,c))_\xi  = (cA(s,c) s_\xi)_\xi, \\ 
-v \kappa \alpha_\xi = a(c) - \alpha,
\end{cases}
\end{equation*}
where $\kappa = \varepsilon_r/\varepsilon_c$. Integrating the first and the second equation we obtain
\begin{equation}\label{eq:almost_dyn_sys1}
\begin{cases} 
-v s + f(s, c) = A(s,c) s_\xi + vd_1, \\
-v cs - v \alpha + cf(s,c)  = cA(s,c) s_\xi + vd_2, \\ 
-v \kappa \alpha_\xi = a(c) - \alpha.
\end{cases}
\end{equation}
The values of $d_1$ and $d_2$ are obtained from the boundary conditions:
\begin{align*}
    vd_1 & = -vs^\pm + f(s^\pm, c^\pm), \\
    vd_2 & = v d_1 c^\pm - v a(c^\pm),
\end{align*}
namely
\begin{equation*}
d_1 = \dfrac{a(c^-) - a(c^+)}{c^- - c^+}, \quad d_2 =  \dfrac{c^+ a(c^-) - c^- a(c^+)}{c^- - c^+}.
\end{equation*}
Additionally, this form of boundary conditions yields us the value of $v$ and the Rankine-Hugoniot conditions \eqref{eq:RH-1}.
Multiplying the first equation of \eqref{eq:almost_dyn_sys1} by $c$, and subtracting this from the second equation in \eqref{eq:almost_dyn_sys1}, we obtain
\[
\alpha = d_1 c - d_2,
\]
thus $\alpha$ could be excluded from the system and we obtain the dynamical system
\begin{equation}\label{eq:dyn_sys_cap_non-eq}
\begin{cases} 
A(s,c) s_\xi = f(s, c) - v (s + d_1), \\
\kappa c_\xi = (vd_1)^{-1}(d_1 c - d_2 - a(c)).
\end{cases}
\end{equation}

\begin{remark}
Note that $d_1$ and $d_2$ do not depend on $v$, $\kappa$. 
Below we consider $v$ and $\kappa$ as parameters of the dynamical system~\eqref{eq:dyn_sys_cap_non-eq} and we search for pairs $(v, \kappa)$ that allow for a travelling wave solution. 
\end{remark}

\begin{remark}
The same transformations of the system~\eqref{eq:main_system_smooth_cap_diff} give us the dynamical system:
\begin{equation}\label{eq:dyn_sys_cap_diff}
\begin{cases} 
A(s,c) s_\xi = f(s, c) - v (s + d_1), \\
\kappa c_\xi = v (d_1 c - d_2 - a(c)),
\end{cases}
\end{equation}
where $\kappa = \varepsilon_d/\varepsilon_c$, which is very similar to \eqref{eq:dyn_sys_cap_non-eq}, so our results translate to it with minimal modifications in the proofs.  See Remark \ref{remark_difference_between_systems} for a better understanding why the change in monotonicity with respect to $v$ in the second equation does not translate into any significant change in proofs.
\end{remark}


\subsection{Phase portrait analysis}
\subsubsection{Phase portrait definition}

One of the instruments we utilize in the study of the dynamical system \eqref{eq:dyn_sys_cap_non-eq}
is the analysis of the phase portrait of the system. We draw phase portraits as vector fields $(s_\xi, c_\xi)$ on $\Omega = (0, 1)\times (c^+, c^-)$.  The primary aim of this section is the classification of possible portraits in order to exclude certain bad types of phase portraits that are possible in more general cases (see Remark~\ref{remark_bad_portraits}), and later to reduce the problem to studying one particular type of phase portraits that we call Type II below. 


We pay special attention to the nullclines (zeroes of the right-hand side of the dynamical system), drawing $\{(s,c): s_\xi = 0\}$ as {\bf black} curves and $\{(s,c): c_\xi = 0\}$ as {\color{red}\bf red} curves. The following observations apply to these curves (see Fig. \ref{fig:phase_portrait_1}):
\begin{itemize}
    \item since $A$ is never zero, black curves coincide with $\{(s,c): f(s,c)=v(s+d_1)\}$;
    \item since $a$ is concave (property (A3) in Section \ref{subsec:restrictions}),
    the red curves are always two lines $\{(s,c): d_1c - d_2 - a(c) = 0\} = [0,1] \times \{ c^+, c^- \}$;
    \item since $f$ is $S$-shaped for all $c$ (property (F3) in Section \ref{subsec:restrictions}), the black curves contain at most two points for each fixed concentration $c$;
    \item since $f$ changes monotonicity at most one time for each $s$ (property (F4) in Section \ref{subsec:restrictions}), the black curves contain at most two points for each fixed $s$.
\end{itemize}

\begin{figure}[H]
    \centering
    \includegraphics[width=0.6\textwidth]{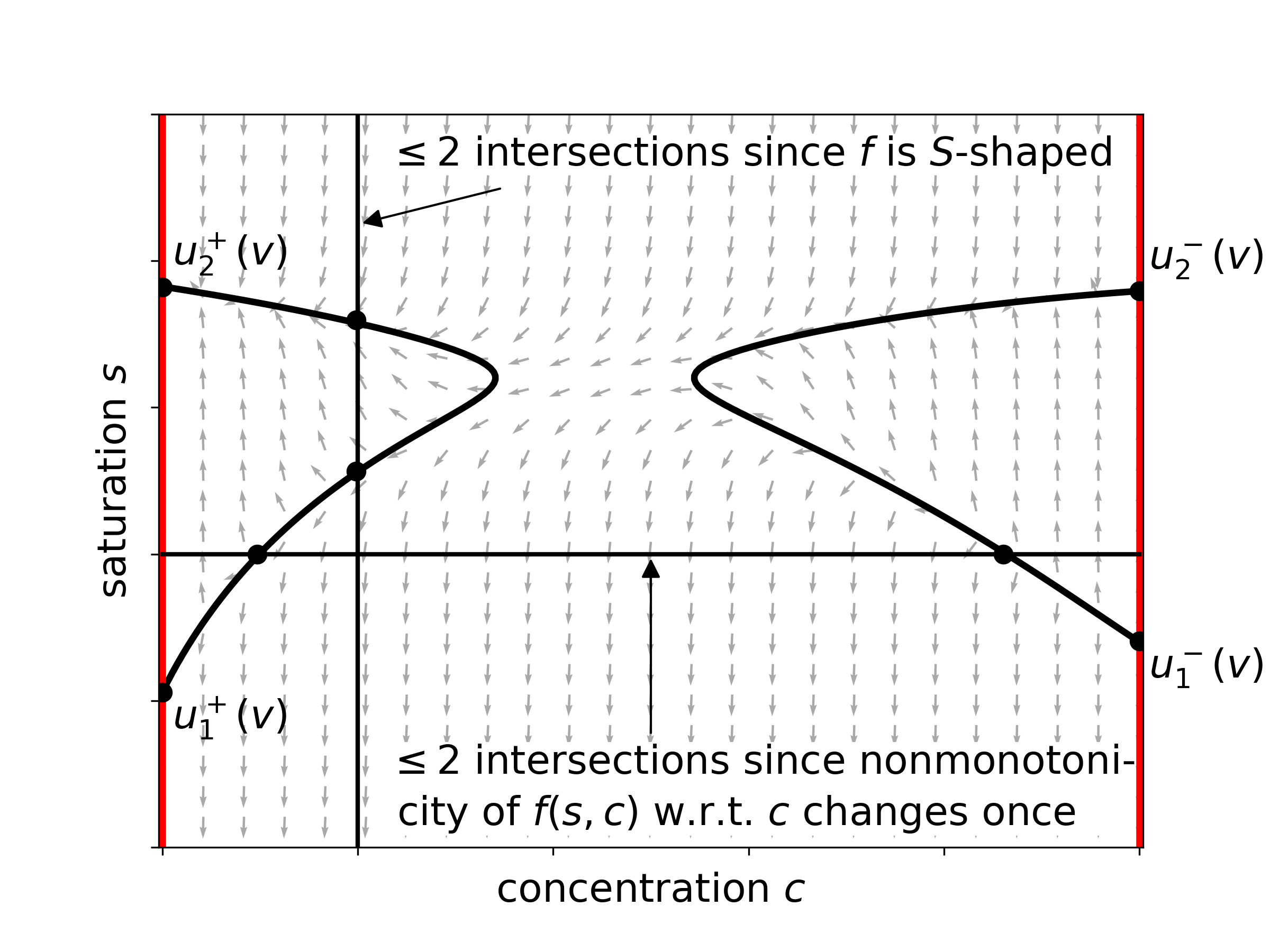}
    \caption{general properties of phase portraits}
    \label{fig:phase_portrait_1}
\end{figure}

\begin{remark}
\label{rm:critical_points}

On the intersections of black and red curves there are at most four fixed points of the dynamical system $u^\pm_{1,2}(v) := (s^\pm_{1,2}(v), c^\pm)$, where $s^\pm_1(v) \leq s^\pm_2(v)$. These critical points are in fact the only points that satisfy the Rankine-Hugoniot condition for the chosen value of $v$.

 To distinguish the type of critical points $u^\pm_{1,2}(v)$, we need to look at the eigenvalues of the Jacobian matrix
\[
\begin{bmatrix}
f_s(s,c) - v & f_c(s,c) \\
0 & h(d_1 - a'(c)) \\
\end{bmatrix},
\]
where $h=(vd_1)^{-1}$ for the system \eqref{eq:dyn_sys_cap_non-eq} and $h=v$ for the system \eqref{eq:dyn_sys_cap_diff}. When $s^\pm_1(v) < s^\pm_2(v)$ it is evident that
\begin{itemize}
    \item $f_s(u^-_1(v)) > v$, $d_1 - a'(c^-) > 0$ gives a source point at $u^-_1(v)$;
    \item $f_s(u^-_2(v)) < v$, $d_1 - a'(c^-) > 0$ gives a saddle point at $u^-_2(v)$;
    \item $f_s(u^+_1(v)) > v$, $d_1 - a'(c^+) < 0$ gives a saddle point at $u^+_1(v)$;
    \item $f_s(u^+_2(v)) < v$, $d_1 - a'(c^+) < 0$ gives a sink point at $u^+_2(v)$.
\end{itemize}
When $s^+_1(v) = s^+_2(v)$, we have $f'_s(u^{+}_{1,2}(v)) = v $, which gives a saddle-node point at  $u^{+}_{1,2}(v)$. Similarly, when $s^-_1(v) = s^-_2(v)$, we have a saddle-node at $u^{-}_{1,2}(v)$.
\end{remark}

\begin{remark}
Note that the nullclines are the same for the dynamical systems \eqref{eq:dyn_sys_cap_non-eq}, \eqref{eq:dyn_sys_cap_diff}, thus the classification below applies to both.

\end{remark}

\subsubsection{Phase portrait classification}


There are five wide classes of phase portraits (see Fig. \ref{fig:phase_portrait_all}): 

\begin{itemize}
    \item Type 0. Black curves contain exactly one point for each concentration $c \in (c^+, c^-)$ and thus there is a curve connecting the red lines from $u^+_1(v)$ to $u^-_1(v)$.
    \item Type I. Black curves contain exactly two points for each concentration $c \in (c^+, c^-)$ and thus there are two non-intersecting curves connecting the red lines: one from $u^+_1(v)$ to $u^-_1(v)$ and another from $u^+_2(v)$ to $u^-_2(v)$.
    \item Type II. There exists an interval $(c_1, c_2)$, on which there are no points of black curves present, but for $c^+$ and $c^-$ there are two critical points; also, $s^+_1(v) < s^-_2(v)$. Thus, black curves split into separate left and right branches and the right branch is not fully lower than the left one.
    \item Type III. There are two branches as in Type II, but the right one is fully lower than the left one, i.e. $s^+_1(v) > s^-_2(v)$.
    \item Type IV. One of the red lines does not contain any critical points.
\end{itemize}
\begin{figure}[H]
    \centering
    \includegraphics[width=0.178\textwidth]{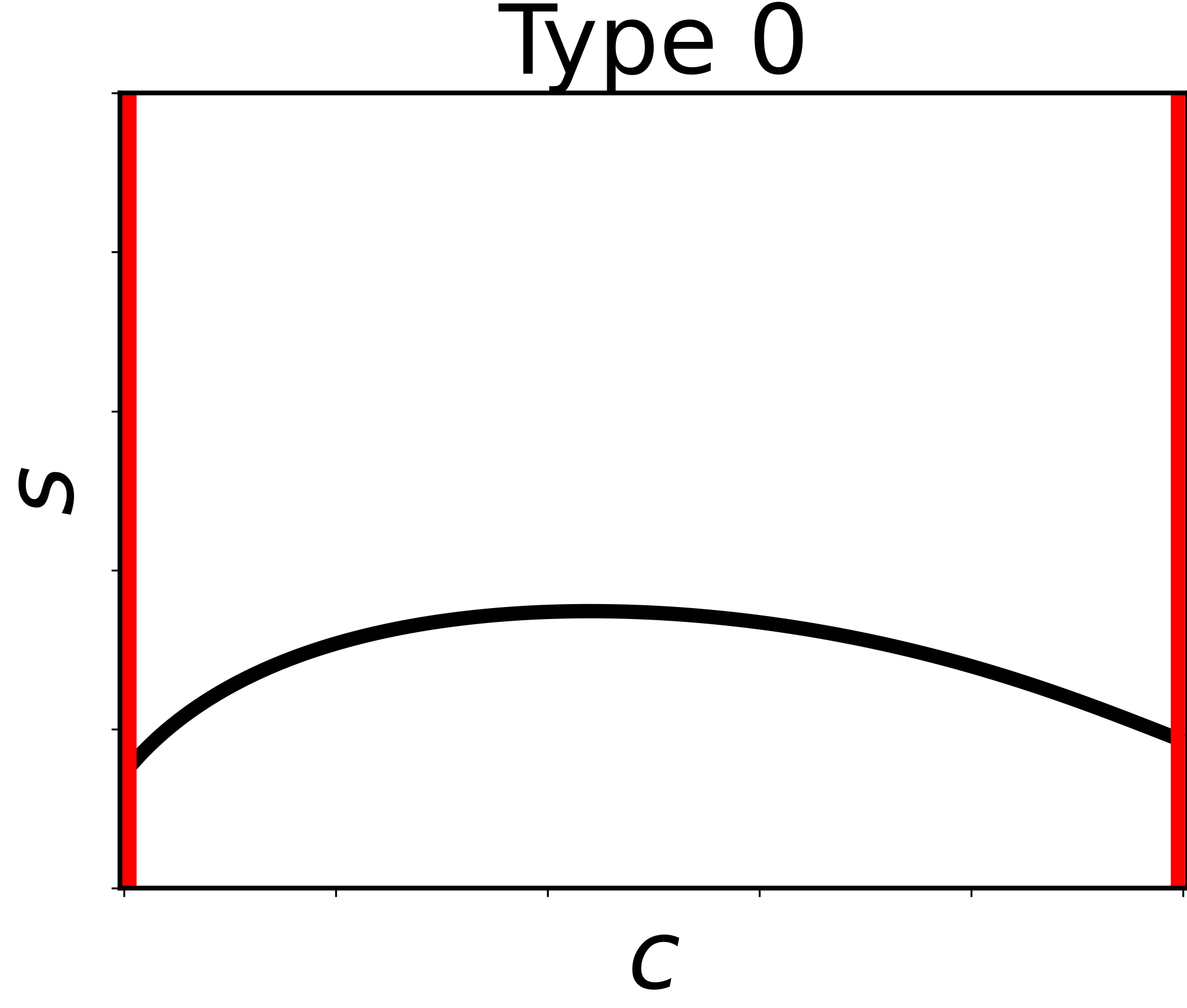}
    \hfil
    \includegraphics[width=0.16\textwidth]{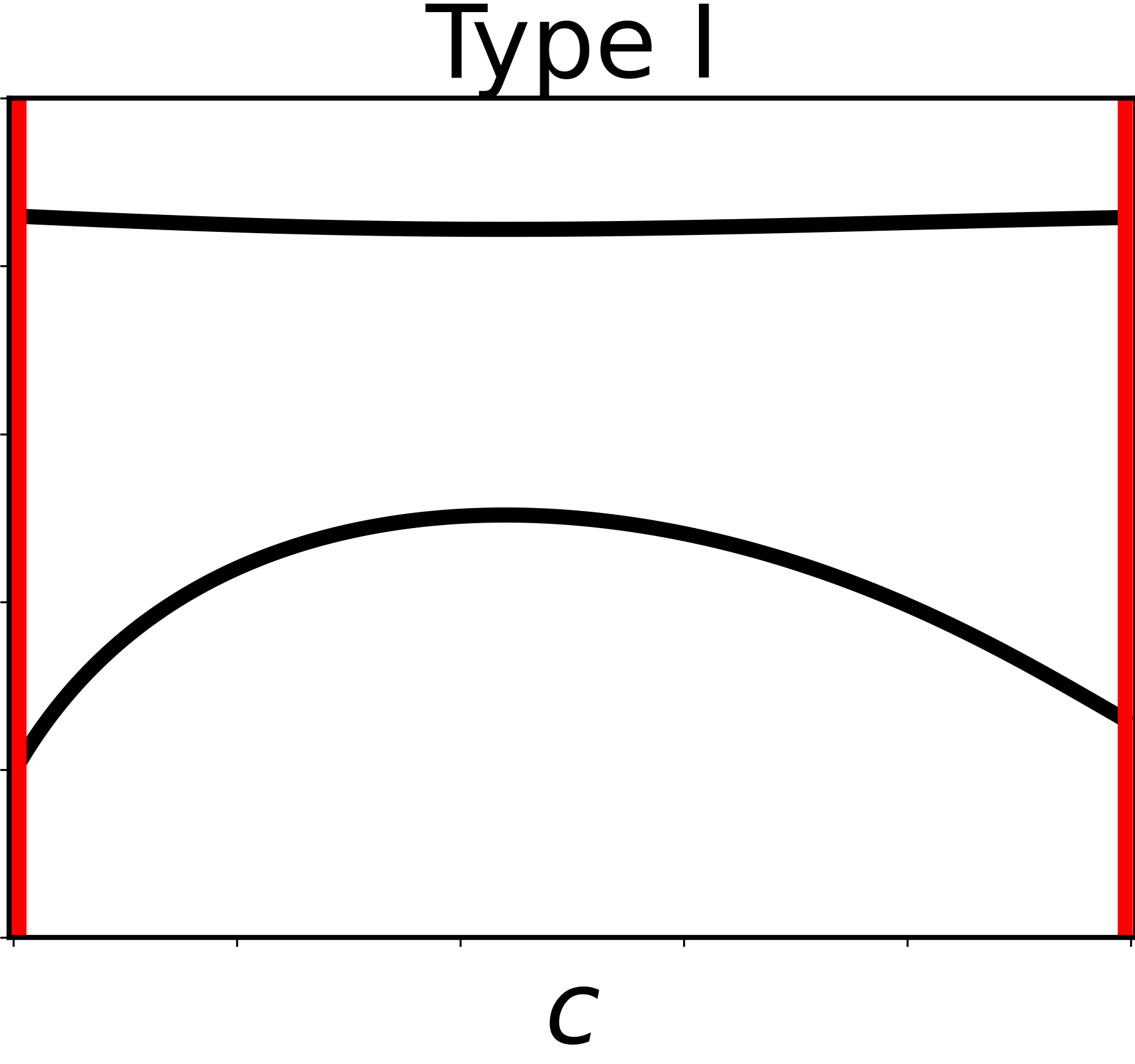}
    \hfil
    \includegraphics[width=0.16\textwidth]{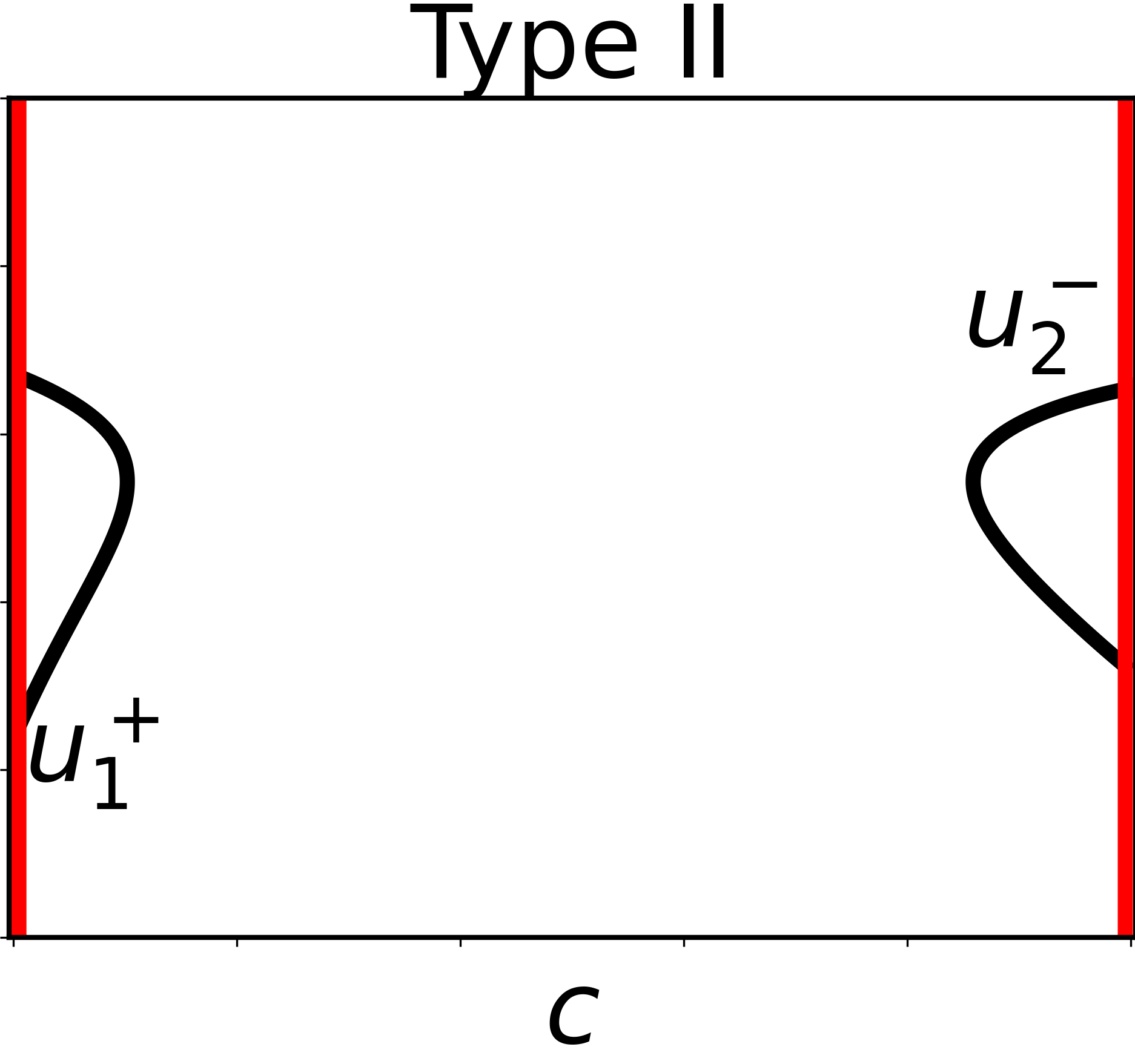}
    \hfil
    \includegraphics[width=0.16\textwidth]{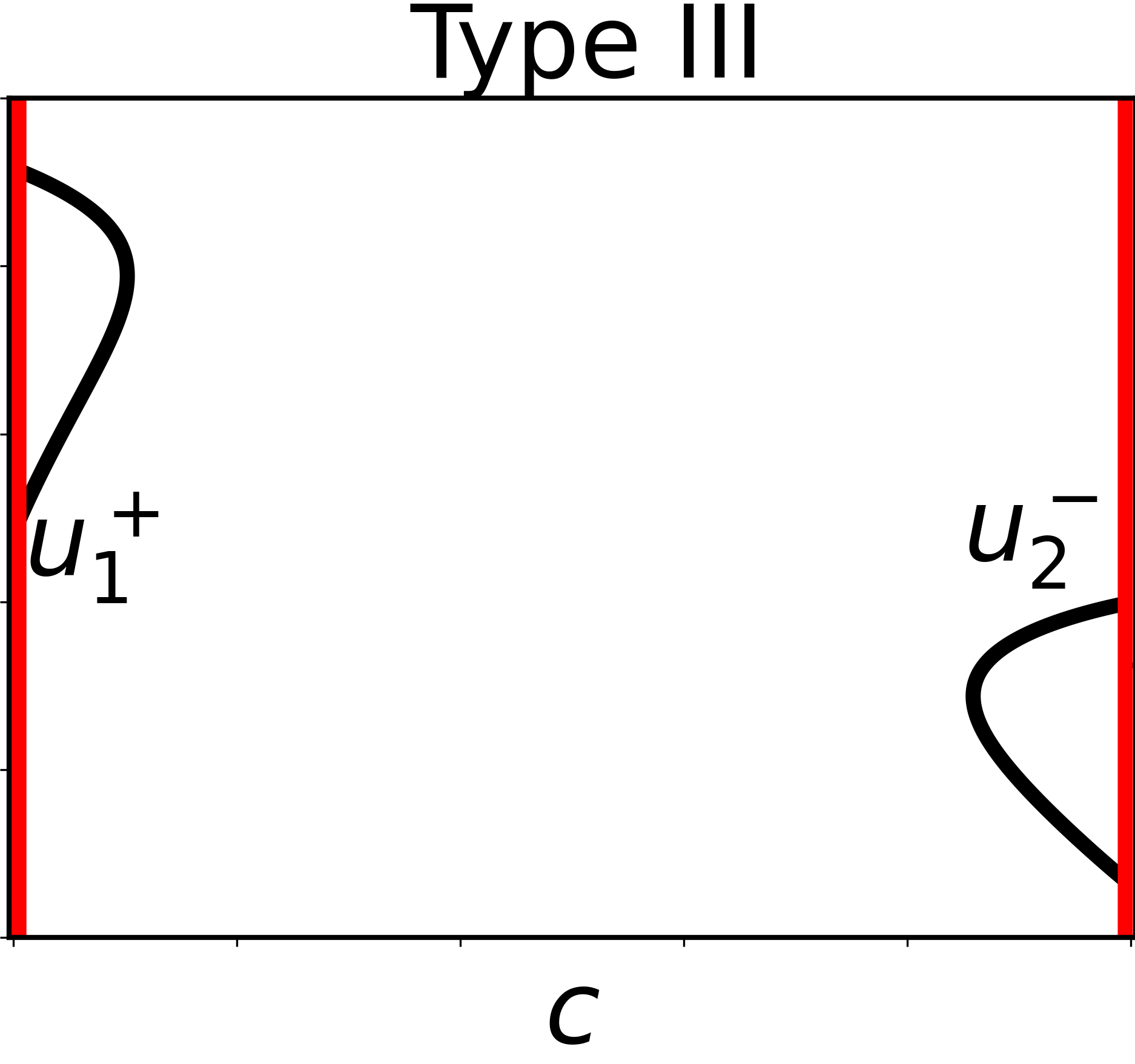}
    \hfil
    \includegraphics[width=0.16\textwidth]{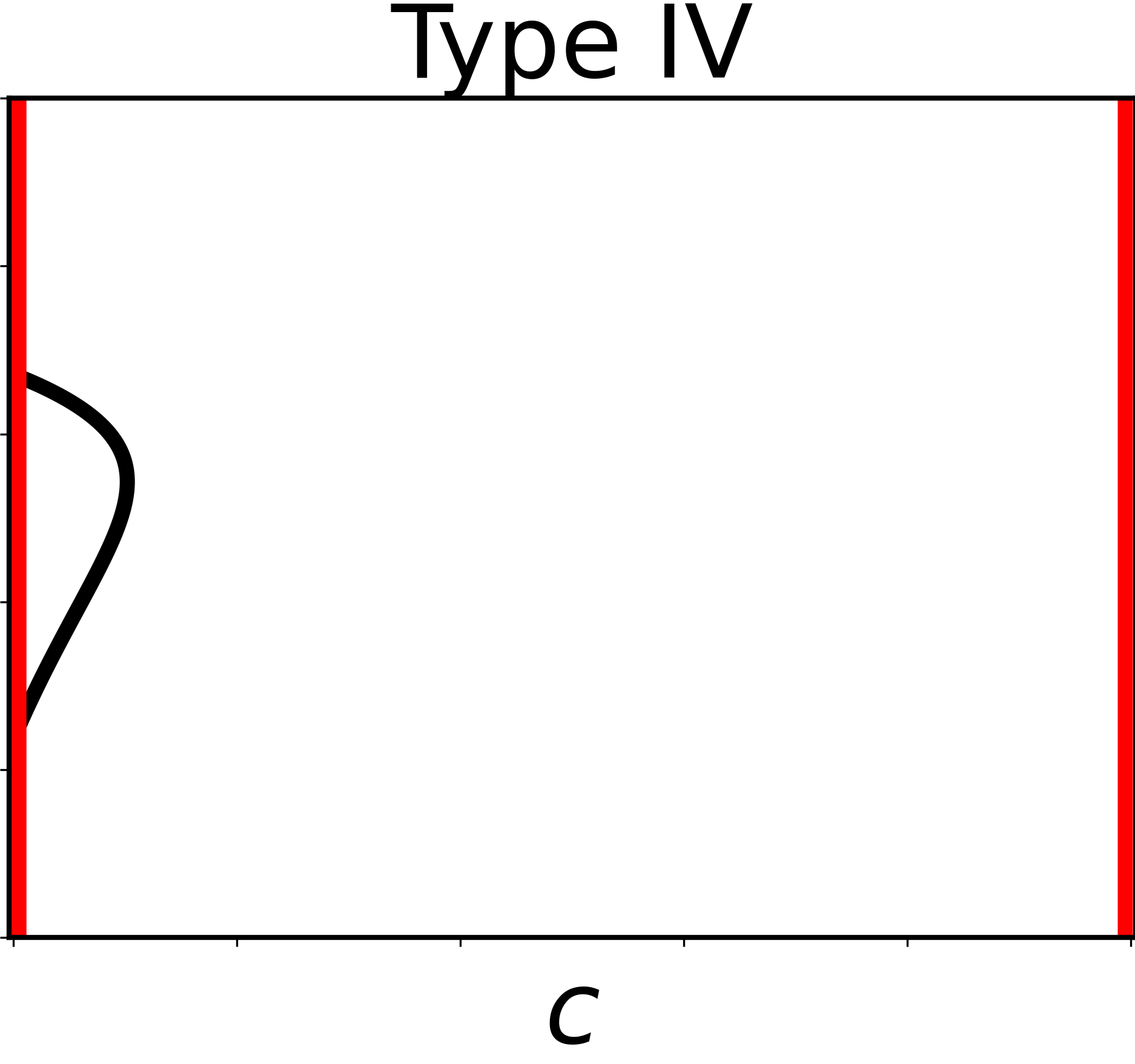}
    \caption{five wide classes of phase portraits}
    \label{fig:phase_portrait_all}
\end{figure}

In addition, there are several singular intermediate types of phase portraits that are essentially border cases for the wide classes described above (see Fig. \ref{fig:phase_portrait_border_type}): 

\begin{itemize}
    \item Type 0-I. Similar to Type I, but the upper black curve coincides with the border $s=1$.
    \item Type I-II. There exists one concentration $c_{*}\neq c^\pm$ for which black curves contain only one point. For all other concentrations there are two points on black curves.
    \item Type II-III. Only one value of $s$ has two points, other values have at most one point, i.e. $s^+_1(v) = s^-_2(v)$.
    \item Type II-IV. One of the branches of Type II portrait degenerates into a point. Thus $c^+$ or $c^-$ only have one critical point on them, i.e. $s^+_1(v) = s^+_2(v)$ or $s^-_1(v) = s^-_2(v)$. 
    \item Type III-IV. Similar to Type II-IV, but for Type III portrait instead of Type II.
\end{itemize}

\begin{figure}[H]
\centering
\includegraphics[width=0.178\textwidth]{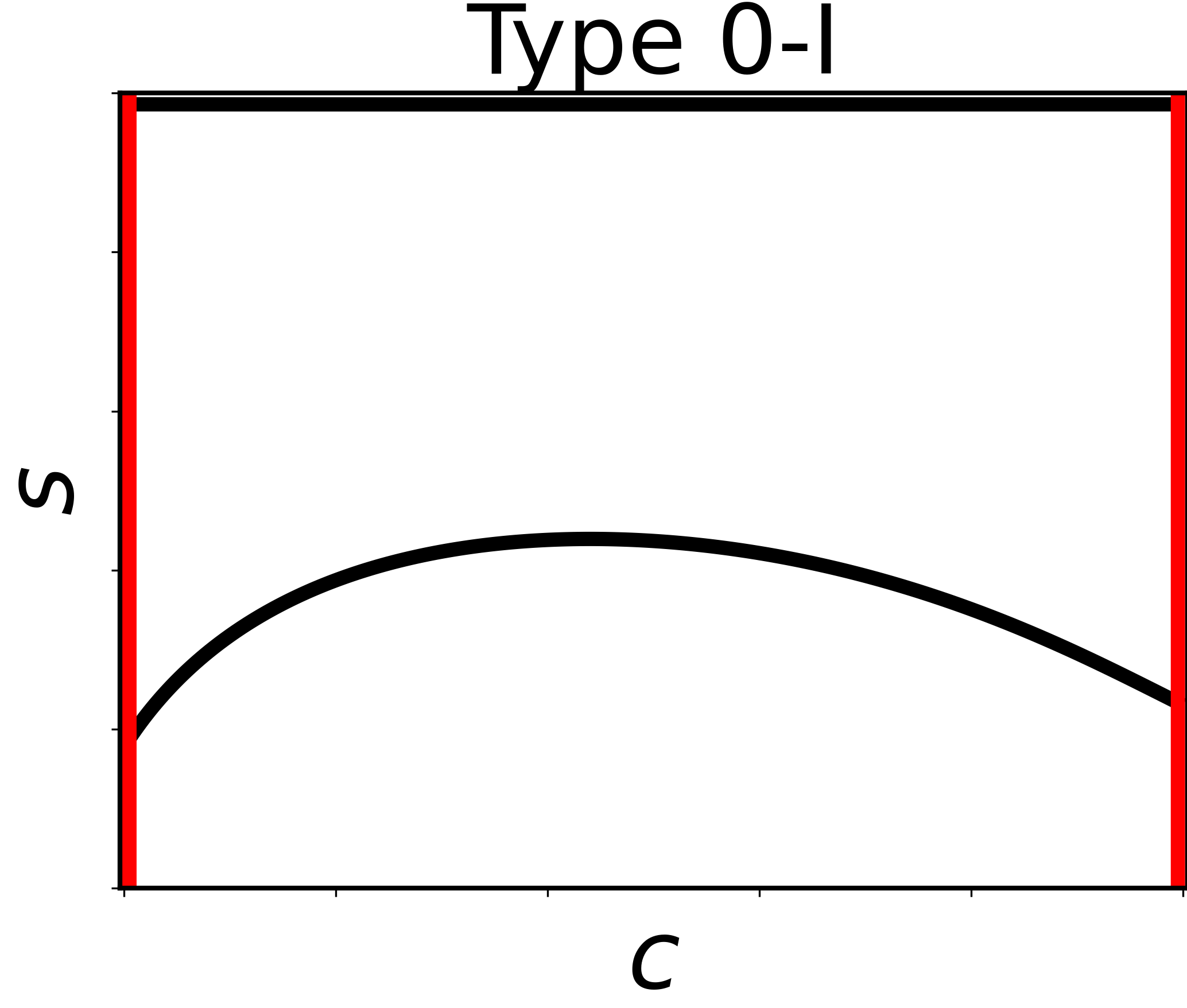}
\hfil
\includegraphics[width=0.16\textwidth]{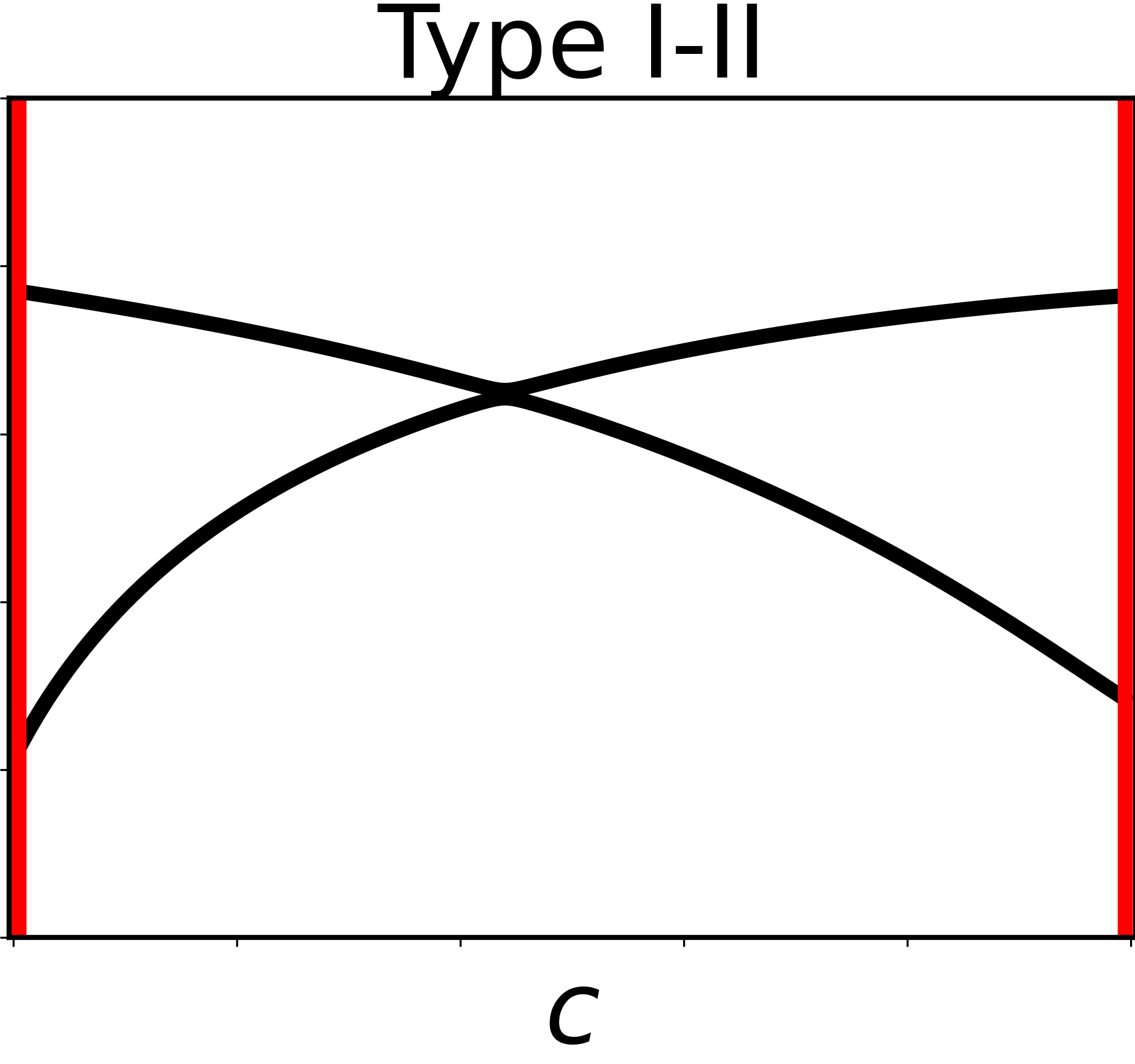}
\hfil
\includegraphics[width=0.16\textwidth]{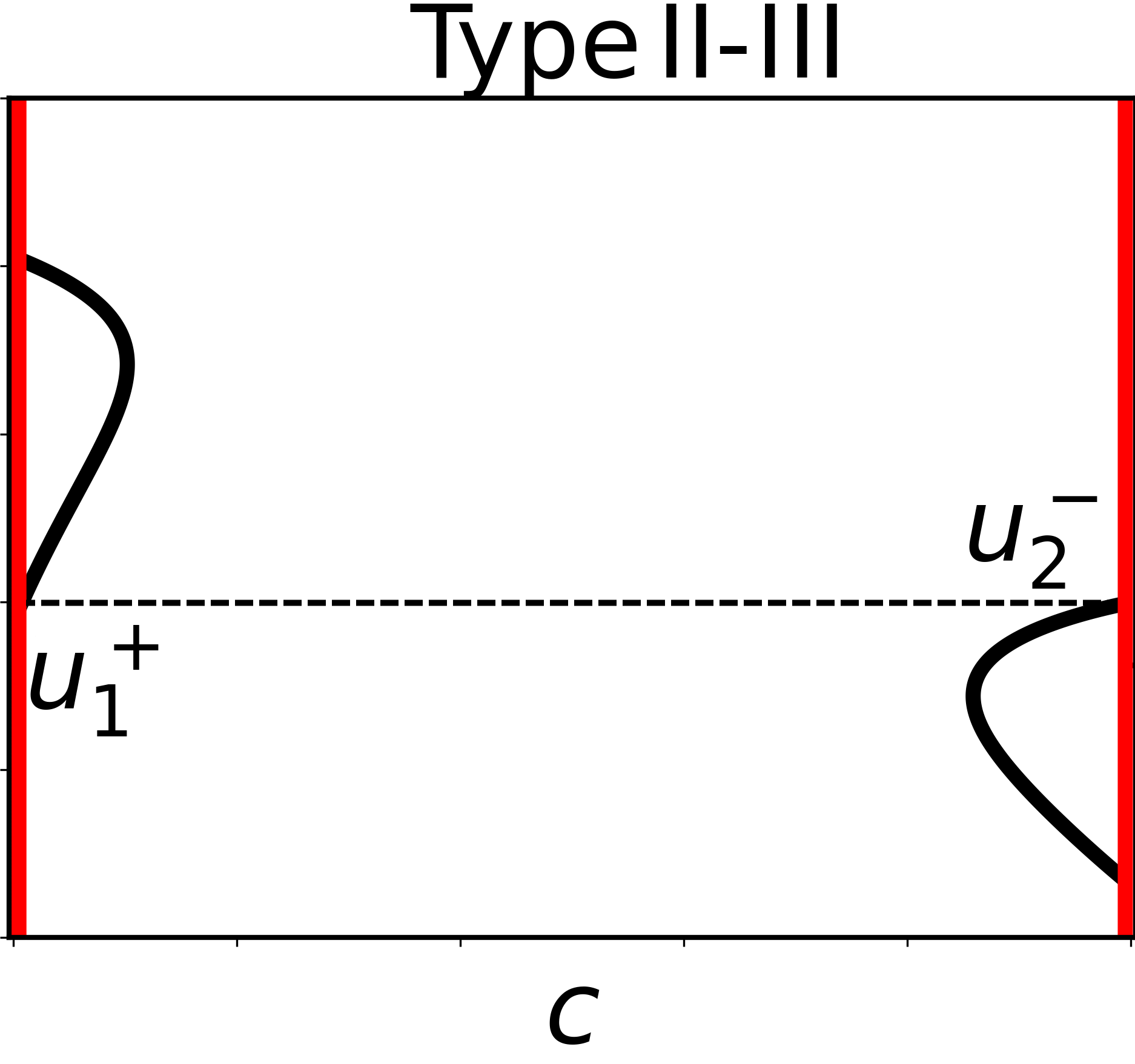}
\hfil
\includegraphics[width=0.16\textwidth]{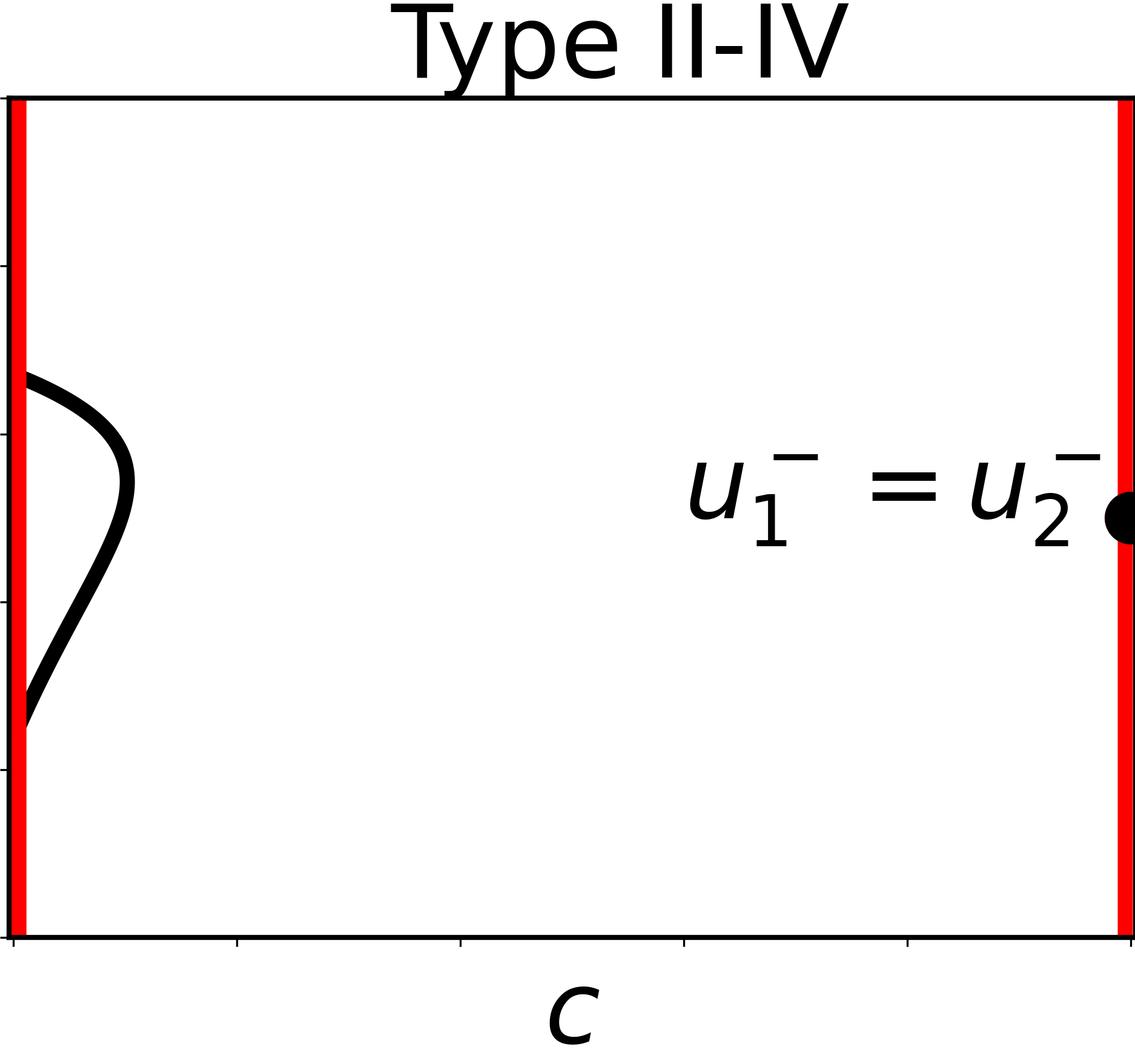}
\hfil
\includegraphics[width=0.16\textwidth]{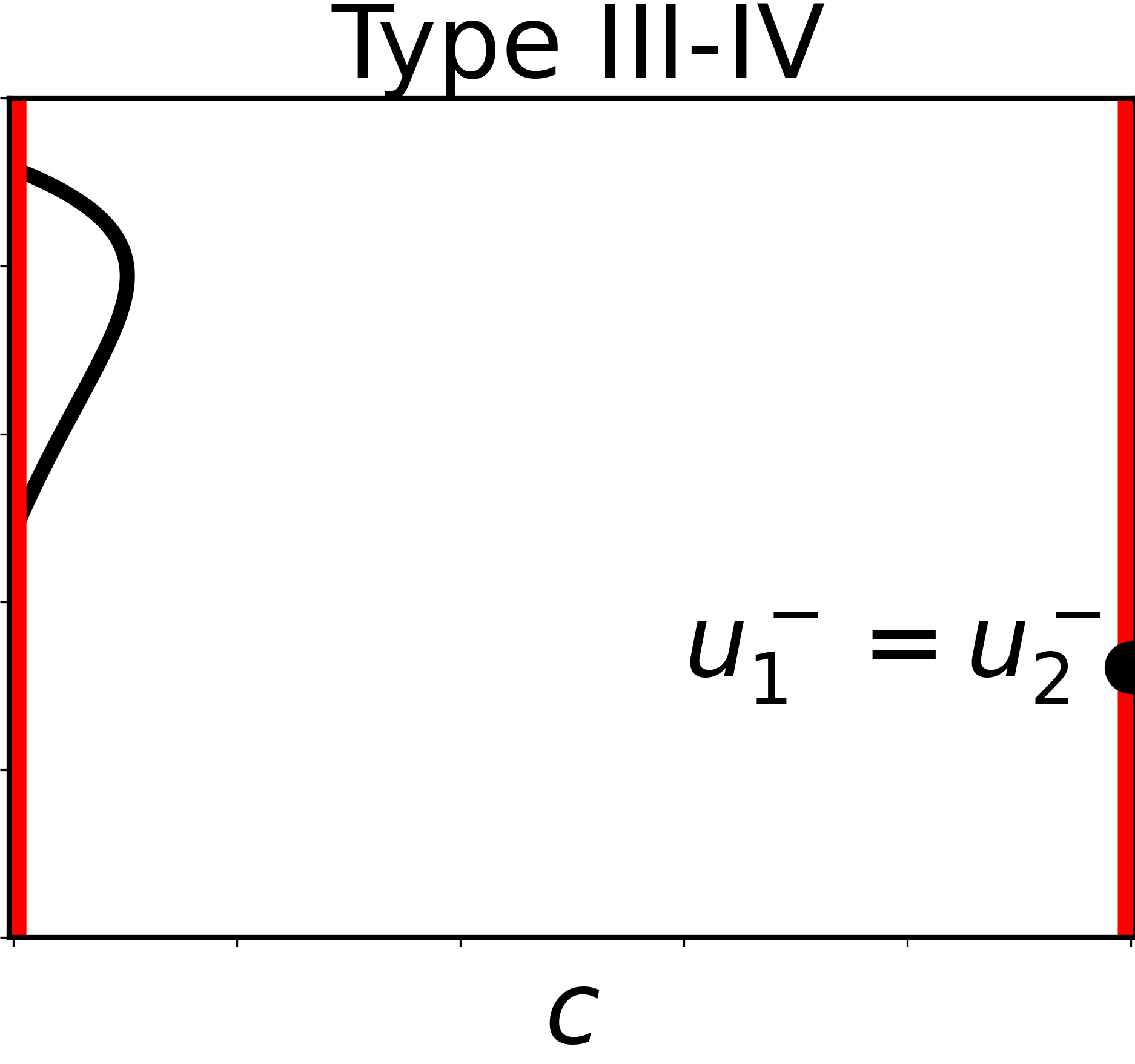}
    \caption{intermediate types of phase portraits, appearing under the assumptions of Section \ref{subsec:restrictions}}
    \label{fig:phase_portrait_border_type}
\end{figure}

Note that $s_\xi=A^{-1}(s,c)(f(s, c) - vs -vd_1)$ is monotone with respect to $v$ at each point $(s,c)$, so the phase portrait type evolves in a predictable manner as $v$ changes (see Fig. \ref{fig:phase_portrait_evolution}):
\begin{itemize}
    \item For $v$ close to zero  we always have Type~0 phase portrait.
    \item With increasing $v$ the type of the portrait changes in increasing order.
    \item Type~III portrait might be omitted.
    \item Intermediate types described above connect portraits corresponding to their numbers.
\end{itemize}
Each boundary type corresponds to a certain value of $v$.  We denote these velocities bounding the Type II by $v_{\min}$ and $v_{\max}$ (these are the same $v_{\min}$ and $v_{\max}$ as in Theorem~\ref{Theorem1}). It is possible to calculate these velocities from $f$: 
\begin{itemize}
    
    
    \item $v_{\min}$ is the velocity that gives a Type I-II portrait and is the minimum among the slopes of the tangent lines from the point $Q=(-d_1,0)$ to the family of functions $\{f(\cdot,c)\}_{c\in[0,1]}$. 
    
    \item $v_{\max}$ is the velocity of either Type II-III or Type II-IV, depending on whether Type III is omitted:
    \begin{itemize}
    \item $v_{\max}$ for Type II-III is the largest slope of the lines from point $Q$ to the intersection points of $f(s,c^-)$ and $f(s, c^+)$;
    \item $v_{\max}$  for Type II-IV (if we excluded Type II-III) is the smallest slope of the tangent lines from point $Q$ to  $f(s,c^-)$ and $f(s,c^+)$.
    \end{itemize}
    In order to determine if we have Type II-III, draw a line from point $Q$ through the intersection point $(s_2, f(s_2,c^+)) = (s_2, f(s_2, c^-))$ realising the highest angle (if there are no intersection points, then there is no Type III). If the line's other intersection $(s_1, f(s_1, c^-))$ with $f(s,c^-)$ is to the left of the curves' intersection ($s_1 < s_2$), and the line's other intersection $(s_3, f(s_3, c^+))$ with $f(s,c^+)$ is to the right of the curves' intersection ($s_2<s_3$), then we have Type II-III (see Fig. \ref{fig:Type23-vmax}). Otherwise Type II-III and Type III are omitted, and we have Type II-IV.
    \begin{figure}[H]
\centering
\includegraphics[height=0.25\textheight]{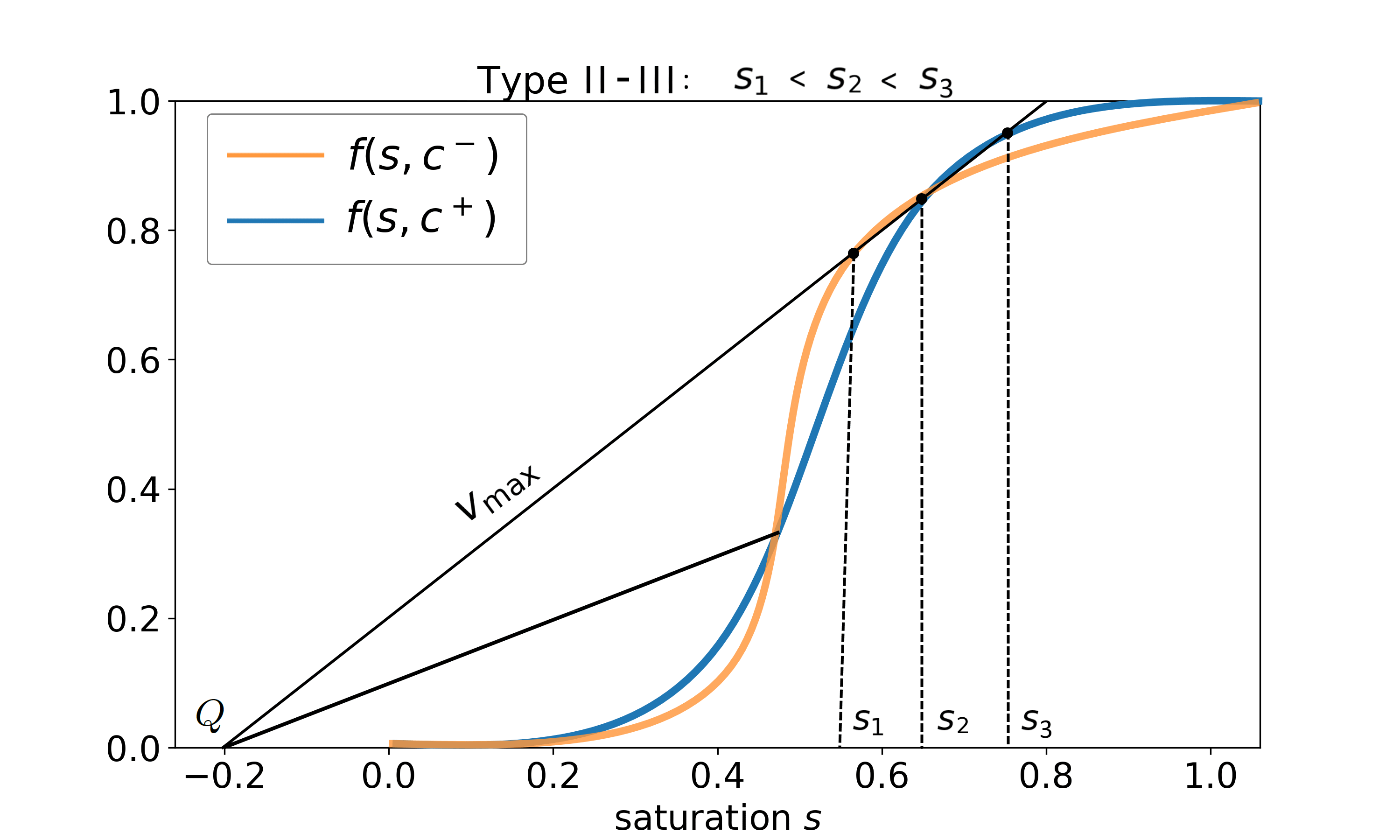}
    \caption{example of fractional flow functions corresponding for Type II-III phase portrait}
    \label{fig:Type23-vmax}
\end{figure}

\end{itemize}

\begin{figure}[H]
\centering
\includegraphics[height=0.25\textheight]{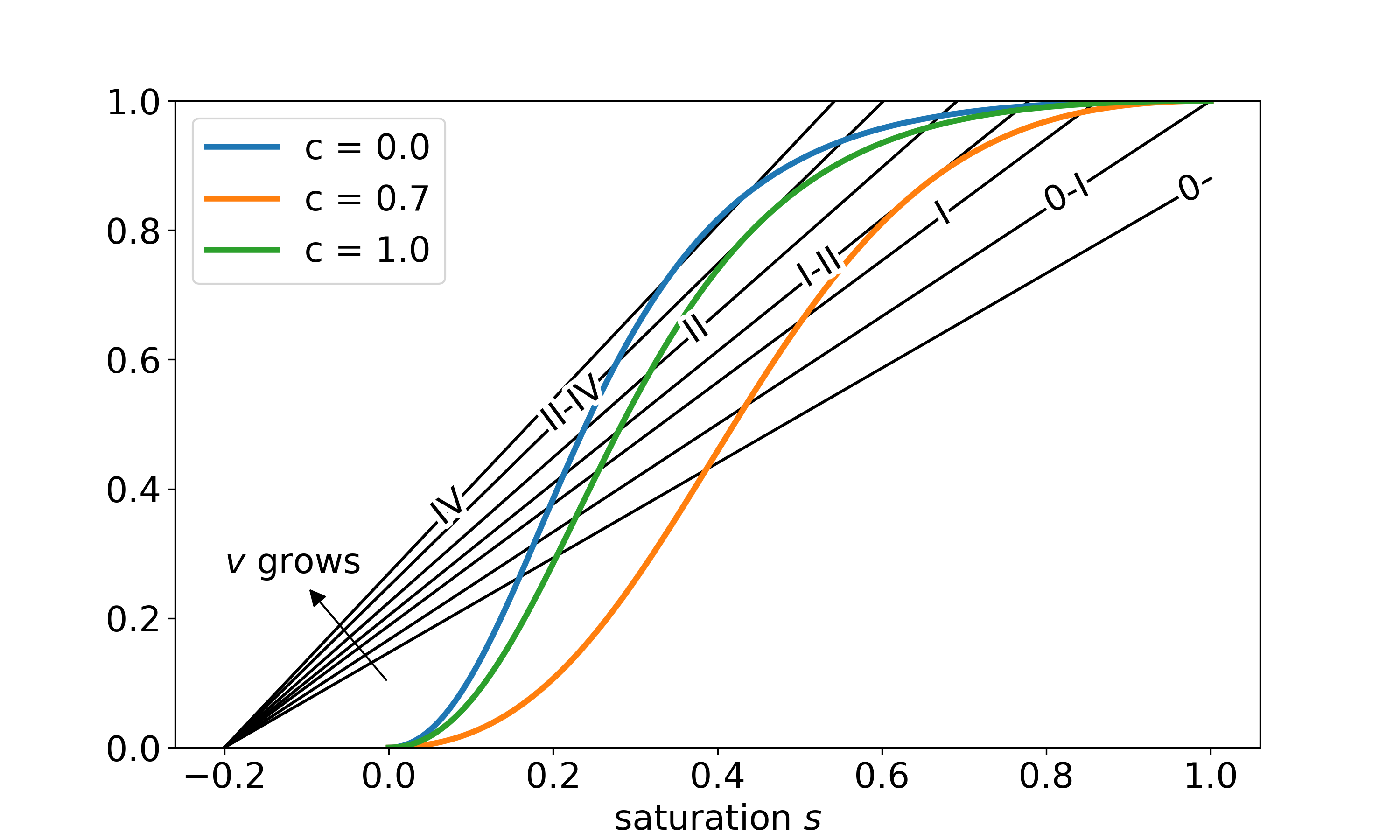}
\includegraphics[height=0.25\textheight]{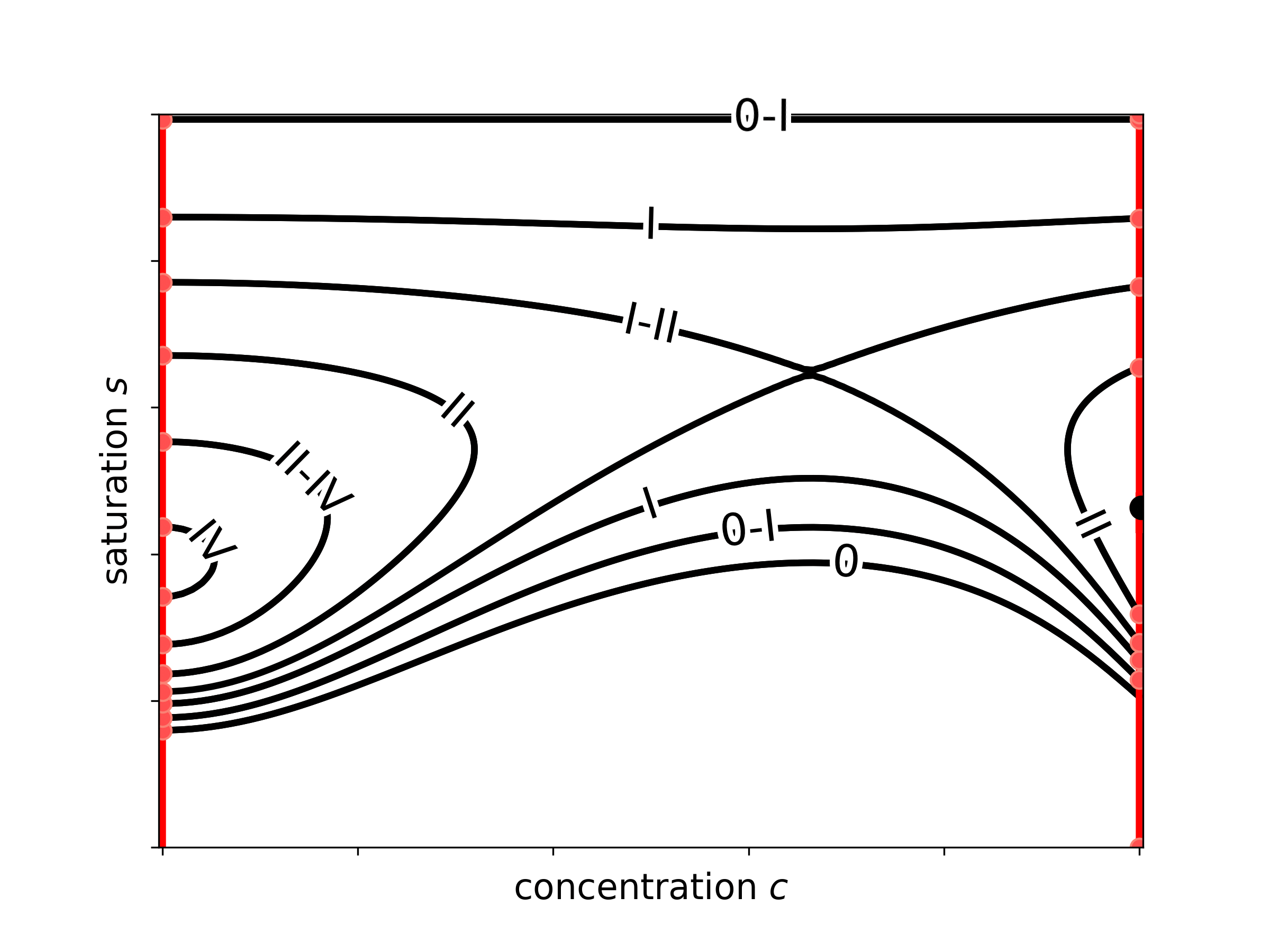}
    \caption{phase portrait evolution as $v$ grows: Type 0 $\to$ Type I $\to$ Type II $\to$ Type IV }
    \label{fig:phase_portrait_evolution}
\end{figure}

\begin{remark}\label{remark_bad_portraits}
Note, that transitions directly from Type~I to Type~III, from Type~I to Type~IV and from Type~0 to any type other than Type~I are not possible under our restrictions on function~$f$. Note as well, that the same restrictions (see Fig.~\ref{fig:phase_portrait_1}) eliminate the possibility of the following types of portraits (Fig.~\ref{fig:bad_phase_portrait}): 

\begin{figure}[H]
  \centering
    \includegraphics[width=0.48\textwidth]{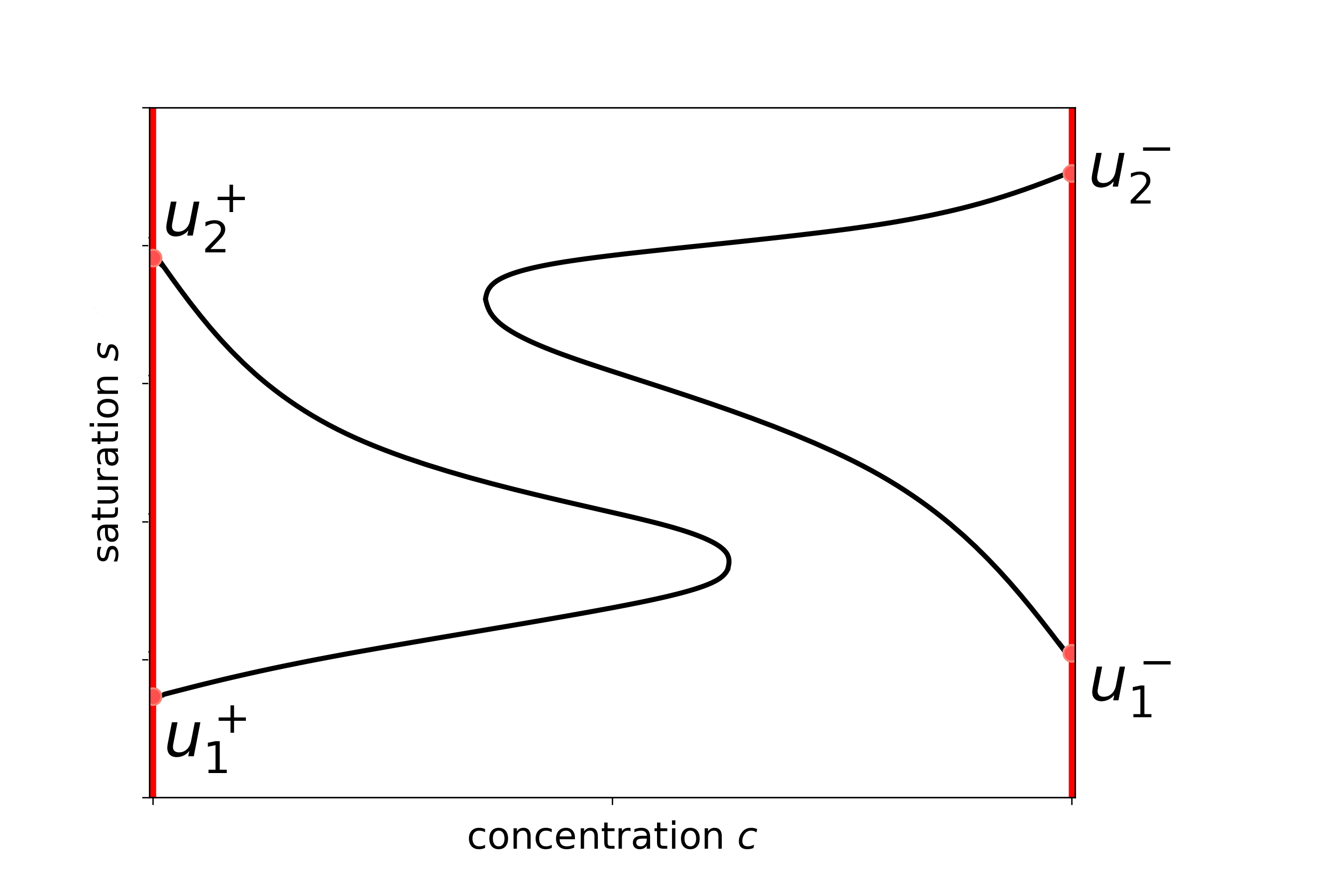}
    \includegraphics[width=0.48\textwidth]{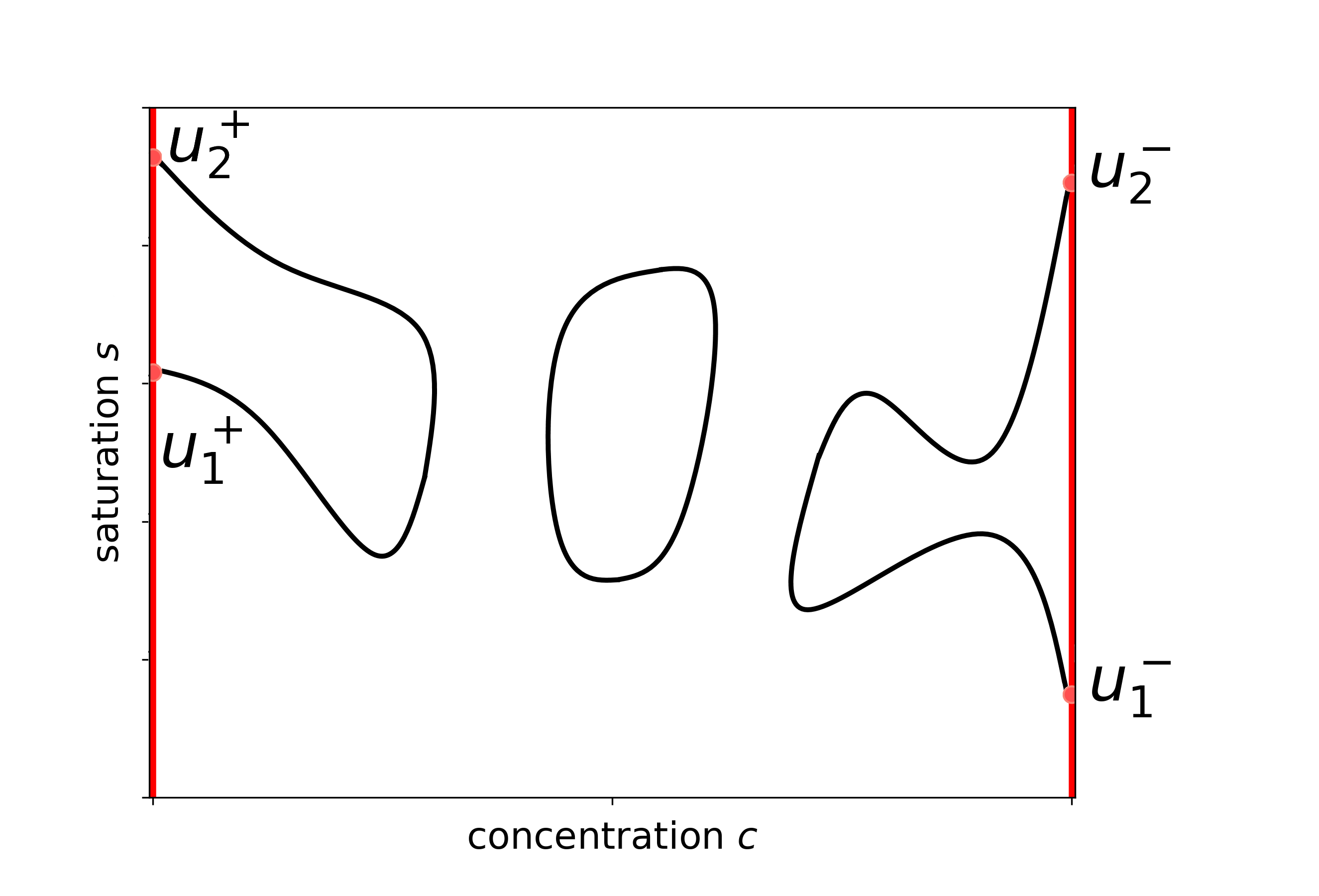}\\
    (a)\qquad\qquad\hfil \qquad\qquad\qquad(b)\hfil
    \caption{(a) an overlap preventing saddle points connection, possible if $f$ is not $S$-shaped; \\
    (b) an emergence of separated connectivity components, possible if non-monotonicity is more complex }
    \label{fig:bad_phase_portrait}
\end{figure}

Further generalizations (see Section~\ref{sec:generalisations} for further discussion on the topic) of this work would have to deal with some or all of this types.
\end{remark}

\subsection{General properties of trajectories}

In this subsection we formulate some basic properties of trajectories of the dynamical systems~\eqref{eq:dyn_sys_cap_non-eq} and \eqref{eq:dyn_sys_cap_diff}. 
The proofs of these properties are provided in the Appendix A.


\begin{proposition}\label{prop1}
Consider one of the dynamical systems \eqref{eq:dyn_sys_cap_non-eq}, \eqref{eq:dyn_sys_cap_diff}. Then:
\begin{enumerate}[label=\Alph*)]
    \item For all $v, \kappa > 0$ the solution with initial values in $\Omega$ exists, is $C^1$ smooth and depends continuously on $v, \kappa$ and initial values on any compact inside $\Omega$.
    \item Solutions form non-intersecting $C^1$ smooth orbits in $\Omega$.
    \item Every trajectory in $\Omega$:
    \begin{itemize}
        \item begins either in  $u^-_{1,2}(v)$ or on the border $\{s=1\}$;
        \item goes from right to left ($c$ always decreases);
        \item ends either in $u^+_{1,2}(v)$ or on the border $\{s=0\}$.
    \end{itemize}
    \item Any orbit can be represented as a graph of a function $s=s^{\kappa,v}(c)$.
    \item If the slope $s_\xi/c_\xi$ is positive for some point $(s,c)$ then it strictly increases when $\kappa$ or $v$ increases.
\end{enumerate}
\end{proposition}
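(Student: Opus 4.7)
The plan is as follows. Parts (A) and (B) reduce to standard ODE theory: the right-hand sides of \eqref{eq:dyn_sys_cap_non-eq} and \eqref{eq:dyn_sys_cap_diff} are $C^1$ on $\bar\Omega$ because $A$ is bounded and separated from zero while $f,a$ are $C^2$ by (F1),(A1). Picard--Lindel\"of together with the standard continuous-dependence theorems on compacta gives (A); uniqueness of solutions then yields (B).

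For (C), the first step is to show $c_\xi<0$ strictly in $\Omega$. Setting $Q(c):=d_1 c-d_2-a(c)$, the defining relations for $d_1,d_2$ give $Q(c^\pm)=0$, while (A3) yields $Q''(c)=-a''(c)>0$, so $Q$ is strictly convex and hence $Q(c)<0$ on $(c^+,c^-)$. Thus $c$ is strictly decreasing along every orbit. At $s=0$, $f(0,c)=0$ gives $s_\xi|_{s=0}=-vd_1/A(0,c)<0$, so orbits cross $\{s=0\}$ only outward; the analogous calculation at $s=1$ uses $f(1,c)=1$ to give $s_\xi|_{s=1}=A^{-1}(1-v(1+d_1))\le 0$ in the velocity range relevant to Theorem~\ref{Theorem1} (one checks $v_{\min}\ge 1/(1+d_1)$ from the geometric description of $v_{\min}$ as the smallest tangent slope from $(-d_1,0)$, using $f(1,c)/(1+d_1)=1/(1+d_1)$). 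For an orbit remaining in $\Omega$ for all forward $\xi$, $|c_\xi|$ is bounded below on any strip $\{c\ge c^++\varepsilon\}$, forcing $c(\xi)\to c^+$; the $\omega$-limit set then lies on the invariant line $\{c=c^+\}$, where the reduced 1D flow $s_\xi=A^{-1}(f(s,c^+)-v(s+d_1))$ has critical points only at $s^+_1,s^+_2$ and is monotone between them, so the $\omega$-limit collapses to a single critical point. The $\alpha$-limit on $\{c=c^-\}$ is handled by a symmetric argument.

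Parts (D) and (E) are then direct. For (D), the strict inequality $c_\xi\ne 0$ in $\Omega$ from (C) permits parametrisation of each orbit by $c$, and the implicit function theorem yields a $C^1$ function $s=s^{\kappa,v}(c)$. For (E), dividing the two equations of \eqref{eq:dyn_sys_cap_non-eq} gives
\[
\frac{s_\xi}{c_\xi}=\frac{\kappa v d_1}{A(s,c)}\cdot\frac{f(s,c)-v(s+d_1)}{Q(c)}.
\]
Since $Q(c)<0$, positivity of the slope is equivalent to $P(s,c;v):=f(s,c)-v(s+d_1)<0$. The $\kappa$-derivative of the slope equals $\mathrm{slope}/\kappa>0$; the $v$-derivative equals $\frac{\kappa d_1}{A(s,c)\,Q(c)}\bigl(f(s,c)-2v(s+d_1)\bigr)$, and since $P<0$ together with $s+d_1>0$ (because $d_1>0$) yields $f-2v(s+d_1)<-v(s+d_1)<0$, while the prefactor $\kappa d_1/(AQ)$ is negative, so this $v$-derivative is also positive. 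An analogous computation handles \eqref{eq:dyn_sys_cap_diff}, only with different (still positive) prefactors in $\kappa,v$.

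The main obstacle is (C), specifically the identification of the $\alpha$- and $\omega$-limit sets of orbits that remain in $\Omega$: ruling out more exotic limit behavior on the invariant lines $\{c=c^\pm\}$ and showing that each limit collapses to one of the isolated critical points $u^\pm_{1,2}(v)$. The remaining parts reduce either to routine ODE facts ((A), (B), (D)) or to direct sign analysis ((E)).
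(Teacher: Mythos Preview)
Your arguments for (A), (B), (D) and (E) match the paper's essentially line for line; the sign computation in (E) is the same calculation written slightly differently (the paper groups the $v$-dependence as $v^2(s+d_1)-vf$ and differentiates that, you differentiate the product directly, with the same conclusion).

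The only real divergence is in (C), in the identification of limit points on $\{c=c^\pm\}$. The paper does not invoke $\omega$-limit sets or the reduced one-dimensional flow on the invariant lines at all. Instead it argues directly: if a trajectory in $\Omega$ accumulates at a non-critical point $(s_0,c^+)$, then in a neighbourhood of that point the numerator $f(s,c)-v(s+d_1)$ is bounded away from zero while $Q(c)=d_1c-d_2-a(c)$ has a \emph{simple} zero at $c^+$ (strict concavity of $a$ gives $Q'(c^+)=d_1-a'(c^+)\neq 0$), so along the orbit
\[
\Bigl|\frac{ds}{dc}\Bigr|=\Bigl|\frac{s_\xi}{c_\xi}\Bigr|\;\ge\;\frac{\mathrm{const}}{c-c^+},
\]
which is non-integrable as $c\to c^+$ and forces $s$ out of $[0,1]$. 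This one estimate replaces the Poincar\'e--Bendixson style reasoning that you correctly flagged as the main obstacle; it is both shorter and avoids having to rule out the possibility that the $\omega$-limit set is the full heteroclinic segment $[s^+_1,s^+_2]\times\{c^+\}$.

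Your discussion of the boundaries $\{s=0\}$ and $\{s=1\}$ is actually more explicit than the paper's, which is silent on them. Your treatment at $s=1$ is honest about needing a velocity restriction: indeed $s_\xi|_{s=1}=A^{-1}(1-v(1+d_1))$ is positive for small $v$ (Type~0 portraits), so the ``ends on $\{s=0\}$'' clause of (C) is only literally correct once $v\ge 1/(1+d_1)$, i.e.\ from Type~I onward. Since the proposition is only ever applied in the paper for $v\ge v_{\min}$, this does not affect anything downstream, but it is worth being aware that the stated range ``all $v>0$'' in (C) is slightly too generous.
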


The following proposition describes the properties of trajectories that enter the saddle point $u^+_1(v)$ or leave the saddle point $u^-_2(v)$.

\begin{proposition}\label{prop_half_trajectories}
Consider one of the dynamical systems \eqref{eq:dyn_sys_cap_non-eq}, \eqref{eq:dyn_sys_cap_diff}. Then for all $v$ the following holds:
\begin{enumerate}[label=\Alph*)]
    \item If $u^-_2(v)$  exists and $u^-_1(v) \neq u^-_2(v)$, then there is a unique trajectory leaving $u^-_2(v)$ inside~$\Omega$.
    \item If $u^+_1(v)$ exists  and $u^+_1(v) \neq u^+_2(v)$, then there is a unique trajectory entering $u^+_1(v)$ inside~$\Omega$.
    \item When either trajectory exists: 
    \begin{itemize}
    \item it depends continuously on $\kappa$ and $v$ pointwise as a function $s^{\kappa,v}(c)$ on $(c^+, c^-)$;
    \item it is increasing as a function $s^{\kappa,v}(c)$ in some vicinity of the critical point; 
    \item it depends monotonously on $\kappa$ and $v$ pointwise as a function $s^{\kappa,v}(c)$ in some vicinity of the critical point: the trajectory from $u_2^-(v)$ decreases when $\kappa$ or $v$ increases, and the trajectory to $u_1^+(v)$ increases when $\kappa$ or $v$ increases.
    \end{itemize}
    \item Every existing trajectory from $u^-_2(v)$ to $u^+_1(v)$  is monotone as a function $s^{\kappa,v}(c)$ and has a positive slope on $(c^+, c^-)$.
\end{enumerate}
\end{proposition}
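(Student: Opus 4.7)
The plan is to organize the proof into three stages. For parts (A) and (B), I would invoke the stable/unstable manifold theorem at the hyperbolic saddle points. Remark~\ref{rm:critical_points} shows that $u^-_2(v)$ has eigenvalues $\lambda_1 = f_s(u^-_2)-v < 0$ and $\lambda_2 = h(d_1-a'(c^-)) > 0$; the eigenvector for $\lambda_1$ is $(1,0)$, tangent to the boundary $c=c^-$ of $\Omega$, while the eigenvector for $\lambda_2$ has nonzero $c$-component. Since $\Omega$ lies on the side $c<c^-$ near $u^-_2(v)$, exactly one branch of the unstable manifold projects into $\Omega$, giving the unique orbit in (A); part (B) is symmetric at $u^+_1(v)$ using the stable manifold.

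For part (C), pointwise continuous dependence on $(\kappa,v)$ is immediate from smooth parameter-dependence of the invariant manifolds and of the saddle locations. The slope of the unstable eigenvector at $u^-_2(v)$ evaluates to
$$
m^-(\kappa,v) \;=\; \frac{f_c(u^-_2)}{h\bigl(d_1-a'(c^-)\bigr)+v-f_s(u^-_2)},
$$
and since $c^-=1$, assumption (F4) gives $f_c(u^-_2)>0$, while the saddle conditions give a positive denominator, so $m^->0$ and the orbit is strictly increasing as a function $s(c)$ near the saddle; a symmetric analysis at $u^+_1(v)$ using $f_c(u^+_1)<0$ at $c^+=0$ yields the same conclusion. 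Increasing $\kappa$ shrinks $h=(\kappa v d_1)^{-1}$ and hence the denominator of $m^-$, making $m^-$ larger: a steeper initial eigendirection from the fixed starting point $u^-_2(v)$ descends more in $s$ per unit decrease in $c$, so $s^{\kappa,v}(c)$ drops in a neighborhood. I would then promote this local statement to a global one by a non-crossing argument based on Proposition~\ref{prop1}(E): two orbits from $u^-_2(v)$ with $\kappa<\kappa'$ cannot meet at a positive-slope interior point, since there the orbit with $\kappa'$ must have strictly larger slope, which is incompatible with the order-reversal required by a transverse crossing. Monotonicity in $v$ follows similarly after noting that $s^-_2(v)$ is strictly decreasing in $v$ (the upper intersection of $f(\cdot,c^-)$ with the chord of slope $v$ through $(-d_1,0)$ moves left), so the orbit starts lower and stays lower. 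The analogous statements for the orbit entering $u^+_1(v)$ are obtained by the same arguments with reversed direction of increase, using $f_c(u^+_1)<0$ and that $s^+_1(v)$ strictly increases in $v$.

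Part (D) is the main obstacle. Near each saddle, (C) places the orbit in the region $\{f(s,c)<v(s+d_1)\}$, where both $s_\xi<0$ and $c_\xi<0$ so the slope $s_\xi/c_\xi$ is automatically positive; and under our assumptions the only phase portrait admitting a $u^-_2\to u^+_1$ connection is Type~II (Type~III requires $s^+_1>s^-_2$, forcing negative average slope, which is incompatible with positive slope near the endpoints), so the two sides of this outer region are joined through the vertical gap $c\in(c_1,c_2)$ where the whole strip has $s_\xi<0$. It remains to exclude that the orbit ever leaves this outer region for $\{f(s,c)>v(s+d_1)\}$. I would argue by contradiction at an interior local extremum of $s^{\kappa,v}(c)$: there $s_\xi=0$, and differentiating $A(s,c)s_\xi=f(s,c)-v(s+d_1)$ along the orbit gives $s_{\xi\xi}=(f_c/A)\,c_\xi$, so since $c_\xi<0$, a local maximum forces $f_c(s,c)>0$ and a local minimum forces $f_c(s,c)<0$. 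Combining this with the comparison of slopes at the endpoints (the eigenvector slope is strictly less than the nullcline tangent's slope $f_c/(v-f_s)$, so the orbit actually enters the outer region and stays there in a neighborhood of each saddle) and with the structure of $c\mapsto f_c(s,c)$ from (F4), I expect to rule out both types of extrema and conclude strict monotonicity of $s^{\kappa,v}(c)$ with positive slope throughout $(c^+,c^-)$.
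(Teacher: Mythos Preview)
Your arguments for (A) and (B) via the stable/unstable manifold theorem are equivalent to the paper's appeal to Hartman--Grobman; nothing to add there.

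For (C) you take a different route than the paper. The paper does not compute eigendirections at all: it characterizes the trajectory into $u_1^+(v)$ as the supremum of all trajectories through $(\theta,c_0)$ that exit $\Omega$ through $\{s=0\}$, and then gets continuity and monotonicity in $\kappa,v$ directly from Proposition~\ref{prop1}(E) applied to that family (increasing $\kappa$ or $v$ steepens all positive-slope orbits, hence enlarges the set of orbits that escape through $\{s=0\}$, hence raises the supremum). Your eigenvector computation is more explicit but more fragile: it needs $f_c\neq 0$ at $c^\pm$, which (F4) does not guarantee at the closed endpoints $c=0,1$; and for $v$-monotonicity you have to juggle the simultaneous motion of the base point $s^-_2(v)$ and the eigendirection, whereas the supremum argument absorbs both effects at once. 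Your non-crossing argument via Proposition~\ref{prop1}(E) is fine locally, but note that the proposition only claims monotonicity \emph{in some vicinity of the critical point}, so your ``promote to global'' step is unnecessary (and would in fact require part (D), risking circularity).

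Part (D) is where there is a real gap. Your plan is to study local extrema of $s(c)$ on the nullcline and use $s_{\xi\xi}=(f_c/A)\,c_\xi$ to pin down the sign of $f_c$ there; you then write ``I expect to rule out both types of extrema'' via (F4). But this does not close: a first crossing (local minimum) at $(s_*,c_*)$ forces $c_*\le c^*(s_*)$, a subsequent crossing (local maximum) at $(s_{**},c_{**})$ with $c_{**}<c_*$ forces $c_{**}\ge c^*(s_{**})$, and between them $s$ increases so $s_{**}>s_*$. All you get is $c^*(s_{**})<c^*(s_*)$, which contradicts nothing in (F4) since $c^*$ need not be monotone in $s$. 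The paper's argument is different and sharper: if $s'(c_0)<0$ at some interior point, then the horizontal line $s=s_0:=s(c_0)$ is crossed by the graph at least three times---once at $c_0$ with negative slope, and once on each side with nonnegative slope (using $s'>0$ near both endpoints from (C)). Reading off the sign of $s_\xi/c_\xi$ at these three crossings gives
\[
f(s_0,c_1)-v(s_0+d_1)\le 0,\qquad f(s_0,c_0)-v(s_0+d_1)>0,\qquad f(s_0,c_2)-v(s_0+d_1)\le 0
\]
with $c_1<c_0<c_2$, which directly contradicts the unimodality of $c\mapsto f(s_0,c)$ from (F4). The key idea you are missing is to slice horizontally (fixed $s$, varying $c$) rather than to analyze extrema; that is what lets (F4) bite.
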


\section{Proof of Theorem~\ref{Theorem1} }
\label{sec:shock-admissibility}





 First, combining Remarks~\ref{rm:velocity-admiss} and~\ref{rm:critical_points} we see that a $c$-shock wave from $u^+$ to $u^-$ is compatible by speeds in a sequence of waves~\eqref{eq:solution-RP-1} if each of the points $u^-$ and $u^+$ is either a saddle point or a saddle-node of the travelling wave dynamical system \eqref{eq:dyn_sys_cap_non-eq} or \eqref{eq:dyn_sys_cap_diff}. Thus we will pay special attention to trajectories connecting two saddle points, i.e. $u^-_2(v)$ and $u^+_1(v)$.
 
 Second, the following Lemma shows that we are only interested in portraits of Type II and intermediate portraits of Type I-II, Type II-III and Type II-IV:
\begin{lemma}
\label{lm:TypeI-III-no-trajectory}
Type~0 and Type~IV do not have a pair of saddle points to connect. 
Type~I and Type~III 
do not have a trajectory connecting saddle points $u^-_2(v)$ and $u^+_1(v)$.
\end{lemma}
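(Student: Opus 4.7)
The proof splits into two essentially distinct tasks: for Types~0 and~IV I only need to \emph{count} the critical points and verify that the required pair $(u^-_2, u^+_1)$ is incomplete, whereas for Types~I and~III both saddles actually exist and I must rule out any trajectory between them. My plan is as follows.

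For Type~0, the nullcline geometry forces exactly one intersection per concentration level, so only $u^-_1(v)$ and $u^+_1(v)$ can be critical points. Applying the eigenvalue classification from Remark~\ref{rm:critical_points} at these two points (using $d_1-a'(c^-)>0$ and $d_1-a'(c^+)<0$, together with the signs of $f_s-v$ on the unique black branch), I obtain a source at $u^-_1(v)$ and a saddle at $u^+_1(v)$; hence no pair of saddles is present. For Type~IV, by definition one of the two red lines $\{c=c^\pm\}$ contains no critical points at all, so at least one of $u^-_2(v)$ and $u^+_1(v)$ fails to exist, and again no pair is available.

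For Types~I and~III the target saddles $u^-_2(v)$ and $u^+_1(v)$ both exist, so I need to invoke the dynamical content of Proposition~\ref{prop_half_trajectories}. Point (D) of that proposition says that any trajectory realising the connection must be representable as a monotone function $s=s^{\kappa,v}(c)$ on $(c^+,c^-)$ with strictly positive slope $ds/dc$; combined with $c_\xi<0$ throughout $\Omega$ (from Proposition~\ref{prop1}(C)), this forces $s_\xi<0$ strictly along the whole trajectory, and integration of the positive slope gives
\begin{equation*}
s^+_1 = s^{\kappa,v}(c^+) \;<\; s^{\kappa,v}(c^-) = s^-_2.
\end{equation*}
For Type~III, the defining inequality $s^+_1(v)>s^-_2(v)$ is in direct contradiction with this, so the connection cannot exist.

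Type~I is the main obstacle and requires the geometric step. Here the upper black nullcline $\{f(s,c)=v(s+d_1),\ s>s_1(c)\}$ is a continuous curve $s_2(\cdot)$ on $[c^+,c^-]$ joining $u^-_2(v)$ at $c^-$ to $u^+_2(v)$ at $c^+$, and by the Type~I definition $s^+_1<s^+_2$. A hypothetical connecting trajectory agrees with $s_2$ at $c^-$ (both equal $s^-_2$) and lies strictly below $s_2$ at $c^+$ (since its value $s^+_1$ is less than $s_2(c^+)=s^+_2$); by the intermediate value theorem it must meet the nullcline at some interior $c_0\in(c^+,c^-)$. At such a meeting point, however, the definition of the black nullcline gives $s_\xi=0$, contradicting the strict inequality $s_\xi<0$ derived above from Proposition~\ref{prop_half_trajectories}(D). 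This rules out the connection in Type~I and completes the argument.

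The only delicate point I foresee is making sure the IVT step for Type~I is clean: the trajectory and the nullcline start at the same point $u^-_2$, so I must argue crossing in the \emph{open} interval $(c^+,c^-)$ — which follows because at $c=c^+$ the trajectory is strictly below the nullcline, and a continuous function cannot pass from equal to strictly below without an interior coincidence. The Types~0, IV, III sub-cases are essentially bookkeeping given the earlier propositions.
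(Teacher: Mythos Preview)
Your treatment of Types~0, IV and III is correct, and the Type~III argument via Proposition~\ref{prop_half_trajectories}(D) (positive slope forces $s^+_1<s^-_2$, contradicting the Type~III inequality) is in fact tidier than the paper's barrier version.

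The Type~I argument, however, has a genuine gap. Your IVT step is invalid: from $s^{\kappa,v}(c^-)=s_2(c^-)$ and $s^{\kappa,v}(c^+)<s_2(c^+)$ you \emph{cannot} conclude an interior coincidence --- the difference $h(c)=s^{\kappa,v}(c)-s_2(c)$ satisfies $h(c^-)=0$, $h(c^+)<0$, which is perfectly compatible with $h<0$ on all of $(c^+,c^-)$. The sentence ``a continuous function cannot pass from equal to strictly below without an interior coincidence'' is simply false. So the contradiction $s_\xi=0$ at an interior point is never reached.

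The idea is salvageable, but you need to use \emph{both} nullclines. The trajectory begins on the right branch $s_2$ at $c^-$ and ends on the left branch $s_1$ at $c^+$; if it meets neither branch in the open interval then, by the sign of the endpoint differences, one gets $s_1(c)<s^{\kappa,v}(c)<s_2(c)$ for all $c\in(c^+,c^-)$. In that strip $f(s,c)-v(s+d_1)>0$, so $s_\xi>0$ and hence $ds/dc<0$, contradicting Proposition~\ref{prop_half_trajectories}(D). The paper's own proof avoids this two-sided sandwich by exhibiting a vertical rectangle $(s_1,s_2)\times(c^+,c^-)\subset(s_1^+,s_2^-)\times(c^+,c^-)$ on which $ds/dc<0$; any path from $s=s_2^-$ down to $s=s_1^+$ must enter this rectangle, where the negative slope prevents further leftward progress.
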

\begin{proof}
For Type I phase portrait there exists an interval $(s_1,s_2)\subset (s_1^+,s_2^-)$ such that 
\begin{align}
    \label{eq:s_c_derivative}
\dfrac{s_\xi}{c_\xi}= \kappa\dfrac{(v d_1) (f(s, c) - v( s + d_1))}{A(s,c) (d_1 c - d_2 - a(c))}<0 \quad\text{ on } (s_1,s_2)\times(c^+,c^-).
\end{align}
Thus any trajectory going through this rectangle must have $\frac{d}{dc}s^{\kappa,v}(c)<0$. 
For Type III phase portrait there exists an interval $(s_1,s_2)\subset (s_2^-,s_1^+)$ such that $s_\xi/c_\xi>0$ on $(s_1,s_2)\times(c^+,c^-)$.   Thus any trajectory going through this rectangle must have $\frac{d}{dc}s^{\kappa,v}(c)>0$. 
From these two facts  Lemma~\ref{lm:TypeI-III-no-trajectory} follows immediately (see Fig.~\ref{fig:lemma3}).
\end{proof}

\begin{figure}[H]
    \centering
    \includegraphics[width=0.49\textwidth]{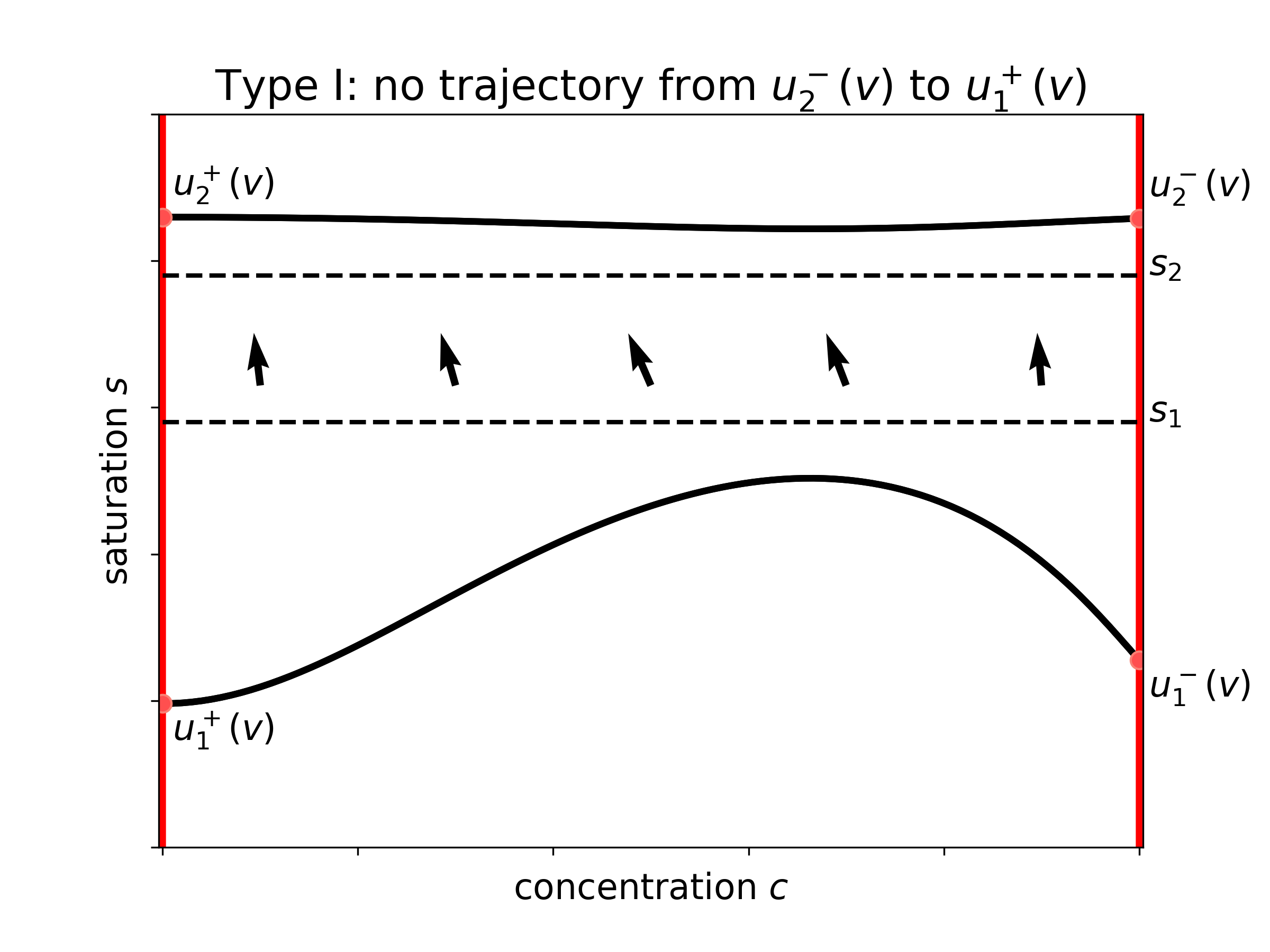}
    \includegraphics[width=0.49\textwidth]{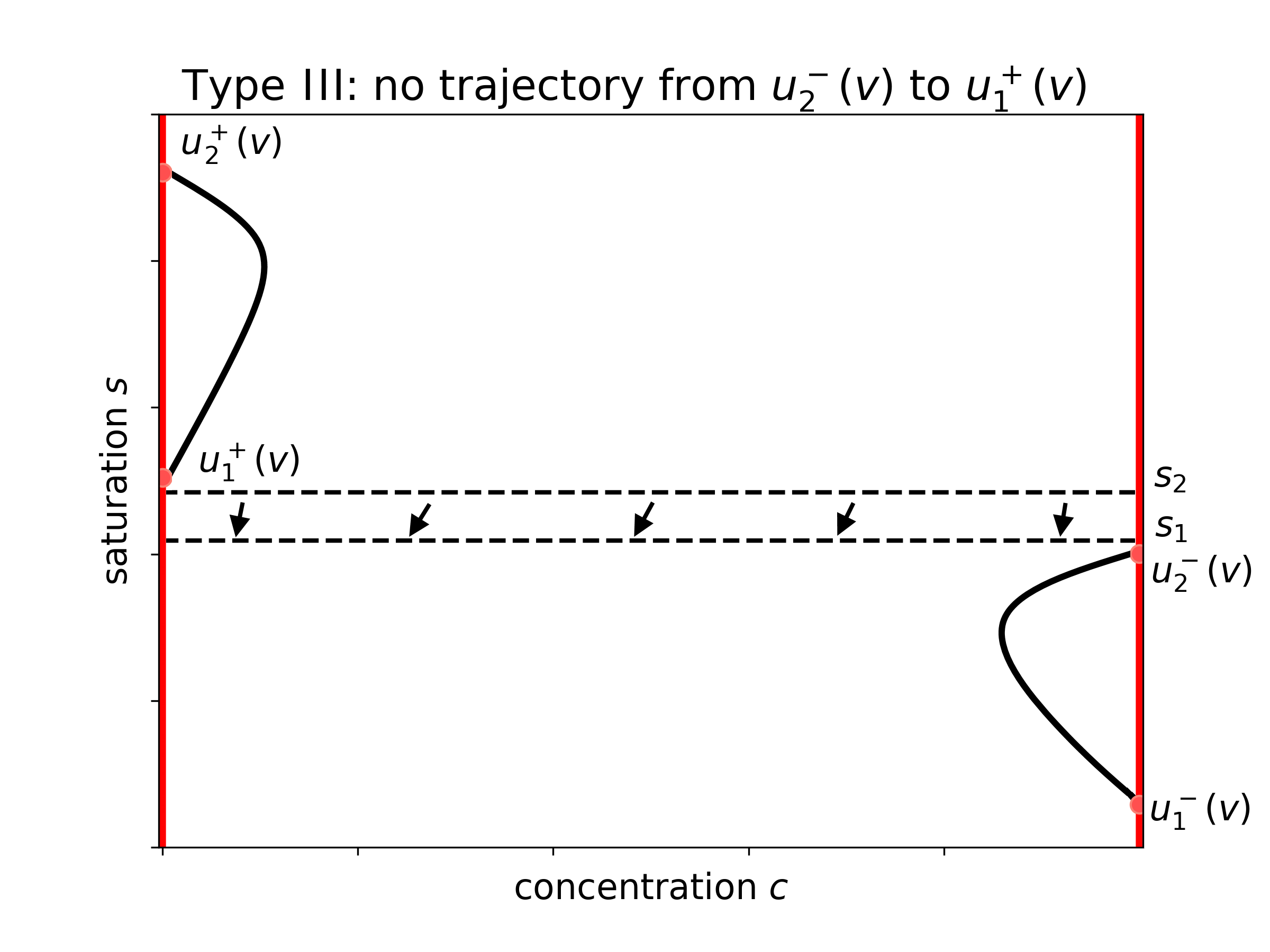}
    \caption{schematic proof of Lemma~\ref{lm:TypeI-III-no-trajectory}}
    \label{fig:lemma3}
\end{figure}

Third, we prove the following theorem for Type II phase portraits. Theorem~\ref{Theorem1} easily follows from it 
(for details see proof at the end of this section).


\begin{theorem}\label{Theorem2}
For every Type II phase portrait, i.e. for every $v\in(v_{\min}, v_{\max})$, there exists a unique $\kappa(v) \geq 0$ such that there exists a trajectory from $u_2^-(v)$ to $u_1^+(v)$. Moreover, $\kappa$ is monotone in $v$, continuous, $\kappa(v) \to \infty$ as $v\to v_{\min}$, and there exists $\kappa_{crit} \in [0, +\infty)$ such that $\kappa(v) \to \kappa_{crit}$ as $v \to v_{\max}$.
\end{theorem}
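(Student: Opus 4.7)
The plan is a shooting argument in $\kappa$ for a fixed $v\in(v_{\min},v_{\max})$. By Proposition~\ref{prop_half_trajectories}~A and~B, for every $\kappa>0$ the saddle $u_2^-(v)$ emits a unique unstable orbit $s=\phi_-^{\kappa,v}(c)$ into $\Omega$, and the saddle $u_1^+(v)$ receives a unique stable orbit $s=\phi_+^{\kappa,v}(c)$; a heteroclinic connection exists if and only if these two orbits coincide, which by uniqueness of orbits (Proposition~\ref{prop1}~B) is equivalent to their graphs meeting at a single point. I would track $\phi_-^{\kappa,v}$ forward globally and classify its fate in $\overline{\Omega}$: it either (i) converges to the sink $u_2^+(v)$, (ii) converges to the saddle $u_1^+(v)$ (the desired connection), or (iii) exits $\Omega$ through $\{s=0\}$. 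The aim is to show that these three outcomes correspond respectively to $\kappa<\kappa(v)$, $\kappa=\kappa(v)$, and $\kappa>\kappa(v)$ for a unique $\kappa(v)\in(0,\infty)$.

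The central technical tool is strict monotonicity of $\phi_-^{\kappa,v}$ and $\phi_+^{\kappa,v}$ in $\kappa$. Proposition~\ref{prop_half_trajectories}~C furnishes it locally near each saddle. To globalise it I would use the explicit form
\[
\frac{ds}{dc} \;=\; \frac{s_\xi}{c_\xi} \;=\; \kappa\,\frac{v d_1\bigl(f(s,c)-v(s+d_1)\bigr)}{A(s,c)\bigl(d_1 c - d_2 - a(c)\bigr)} \;=:\; \kappa\,G(s,c,v),
\]
observing that the slope is proportional to $\kappa$ and, by Proposition~\ref{prop_half_trajectories}~D, $G>0$ on any candidate connecting orbit. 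A slope comparison at a hypothetical crossing of $\phi_-^{\kappa_1,v}$ and $\phi_-^{\kappa_2,v}$ with $\kappa_1<\kappa_2$ then contradicts the local ordering established near $u_2^-(v)$, so these unstable orbits are strictly ordered in $\kappa$ throughout their common domain; the analogous statement for $\phi_+^{\kappa,v}$ follows verbatim. Uniqueness of $\kappa(v)$ is immediate, and the monotone dependence of $\kappa(v)$ on $v$ is obtained by the same comparison applied to the $v$-parts of Propositions~\ref{prop1}~E and~\ref{prop_half_trajectories}~C (using also that the saddles $u_2^-(v)$ and $u_1^+(v)$ move monotonically with $v$).

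For existence I would examine the two extreme regimes of $\kappa$. As $\kappa\to 0^+$, the second equation of~\eqref{eq:dyn_sys_cap_non-eq} forces $c_\xi$ to blow up, so orbits become nearly vertical; $\phi_-^{\kappa,v}$ stays close to $s=s_2^-(v)$ and is drawn into the sink $u_2^+(v)$, giving case~(i). For $\kappa$ very large the factor $\kappa$ in $ds/dc=\kappa G$ makes $\phi_-^{\kappa,v}$ descend from $s_2^-(v)$ so steeply that it overshoots $u_1^+(v)$ and exits through $\{s=0\}$, giving case~(iii). Continuous dependence on $\kappa$ (Proposition~\ref{prop_half_trajectories}~C) together with the monotonicity above forces the transition between cases~(i) and~(iii) to occur at a single value $\kappa(v)\in(0,\infty)$ at which $\phi_-^{\kappa(v),v}$ terminates exactly at $u_1^+(v)$; continuity of $\kappa(v)$ in $v$ then follows from the implicit-function structure of this shooting.

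For the boundary behaviour, as $v\to v_{\min}^+$ the two branches of the black nullcline pinch together and the portrait degenerates into Type~I-II, which by Lemma~\ref{lm:TypeI-III-no-trajectory} admits no saddle-to-saddle connection; a compactness/continuity argument then promotes this qualitative obstruction into the divergence $\kappa(v)\to\infty$. As $v\to v_{\max}^-$ the limit depends on the boundary portrait: in the Type~II-III case $s_2^-(v_{\max})=s_1^+(v_{\max})$, so the vertical $\kappa\to 0$ limit already realises the connection and $\kappa_{\mathrm{crit}}=0$, while in the Type~II-IV case one saddle collides with a node at a strictly positive $\kappa_{\mathrm{crit}}$ extracted from the saddle-node local normal form. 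The main obstacle I expect is the large-$\kappa$ analysis in case~(iii): one has to verify that the steep descending orbit indeed reaches $\{s=0\}$ rather than being captured by the right branch of the nullcline or draining into $u_2^+(v)$ from a different side, which requires a global topological argument in the phase plane beyond the local statements of Propositions~\ref{prop1} and~\ref{prop_half_trajectories}.
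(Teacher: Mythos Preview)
Your proposal is essentially correct and follows the same route as the paper: a shooting argument in~$\kappa$ for fixed $v$, uniqueness and $v$-monotonicity via slope comparison at a hypothetical crossing (exactly Propositions~\ref{prop1}~E and~\ref{prop_half_trajectories}~C,~D as you invoke them), continuity of $\kappa(v)$ from the implicit-function structure, and a contradiction argument for $v\to v_{\min}$. Your framing of existence through the fate of $\phi_-^{\kappa,v}$ (sink capture for small~$\kappa$ versus exit through $\{s=0\}$ for large~$\kappa$) is equivalent to the paper's formulation via the relative ordering of the two separatrices $\phi_\pm^{\kappa,v}$.

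The obstacle you flag in the large-$\kappa$ regime is real, and the paper closes it by exploiting precisely the defining feature of Type~II portraits: there is a strip $[c_1,c_2]\subset(c^+,c^-)$ on which the black nullcline is absent, so $G(s,c,v)\geq\varepsilon>0$ uniformly in $s\in(0,1)$. A linear barrier $l_1(c)=(c-c_1)/(c_2-c_1)$ then separates the two saddles for every $\kappa>(\varepsilon(c_2-c_1))^{-1}$, forcing $\phi_-$ below $\phi_+$ without any global topological argument. The small-$\kappa$ side is handled by a dual barrier $l_2$ joining points strictly between the critical points on each red line, on which $G$ is bounded above (possibly very negative). For the behaviour at $v\to v_{\max}$ the paper does not use a saddle-node normal form but rather the integral identity
\[
s_2^-(v)-s_1^+(v)=\kappa(v)\int_{c^+}^{c^-} G\bigl(s^{\kappa(v),v}(c),c,v\bigr)\,dc,
\]
which immediately yields $\kappa_{\mathrm{crit}}=0$ in the Type~II--III case (left side $\to 0$, integral bounded below) and $\kappa_{\mathrm{crit}}>0$ in the Type~II--IV case (both sides bounded and bounded away from zero). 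This is shorter than a local-normal-form analysis and treats the two cases uniformly.
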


We  divide Theorem~\ref{Theorem2} into smaller lemmas.

\begin{lemma}
\label{lm:existence_kappa}
For every $v\in(v_{\min}, v_{\max})$, there exists $\kappa(v) \geq 0$ such that there exists a trajectory from $u_2^-(v)$ to $u_1^+(v)$.
\end{lemma}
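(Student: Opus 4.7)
The plan is a shooting argument applied to the unique half-trajectory $\gamma^-_\kappa$ leaving the saddle $u^-_2(v)$ into $\Omega$ (Proposition~\ref{prop_half_trajectories}~A), which depends continuously on $\kappa>0$ (Proposition~\ref{prop_half_trajectories}~C). By the trichotomy in Proposition~\ref{prop1}~C, for each $\kappa$ this trajectory terminates either at $u^+_1(v)$, at $u^+_2(v)$, or on $\{s=0\}$. I will show that the first alternative occurs for some $\kappa(v)\in(0,\infty)$ by proving that the endpoint behavior transitions between the other two alternatives as $\kappa$ varies from $0$ to $+\infty$.

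\textbf{Small $\kappa$.} Rescaling time by $\tau=\xi/\kappa$ turns the second equation into the autonomous scalar ODE $c_\tau=(vd_1)^{-1}(d_1c-d_2-a(c))$, while the first equation contributes to $s$ only at order $O(\kappa)$. A standard Tikhonov singular-perturbation argument then shows that $\gamma^-_\kappa$ converges, uniformly on compact subintervals of $c\in(c^+,c^-)$, to the vertical segment $\{s=s^-_2(v)\}$. The Type~II inequality $s^-_2(v)>s^+_1(v)$ places the asymptotic landing point $(s^-_2(v),c^+)$ strictly to the right of $u^+_1(v)$; matching with the inner layer on the invariant line $c=c^+$, the $s$-flow then drives the trajectory toward the sink $u^+_2(v)$. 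Hence for all sufficiently small $\kappa$, $\gamma^-_\kappa$ ends at $u^+_2(v)$.

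\textbf{Large $\kappa$.} Now $s$ is the fast variable and the trajectory is forced to shadow the $s$-nullcline $\{f(s,c)=v(s+d_1)\}$. In Type~II the right connected component of this nullcline is a loop containing $u^-_2(v)$ with lowest point at $c=c_2>c^+$. Past this fold, the trajectory enters the nullcline-free strip $\{c_1<c<c_2\}$, where $f-v(s+d_1)<-\delta$ and $d_1c-d_2-a(c)<-\delta$ uniformly in $(s,c)$ for some $\delta>0$; hence
\[
\frac{ds}{dc}=\kappa\,\frac{vd_1\bigl(f(s,c)-v(s+d_1)\bigr)}{A(s,c)\bigl(d_1c-d_2-a(c)\bigr)}\ge C\kappa
\]
for some constant $C>0$ independent of $\kappa$. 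Integrating downward in $c$ shows that for $\kappa$ large enough, $s^-_\kappa(c)$ reaches $0$ while $c$ is still in $(c_1,c_2)$, so $\gamma^-_\kappa$ ends on $\{s=0\}$.

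\textbf{Conclusion and obstacle.} The sets $K_{\mathrm{sink}}=\{\kappa>0:\gamma^-_\kappa\text{ ends at }u^+_2(v)\}$ and $K_{\mathrm{exit}}=\{\kappa>0:\gamma^-_\kappa\text{ ends on }\{s=0\}\}$ are open subsets of $(0,\infty)$, because $u^+_2(v)$ is an attracting sink with open basin in $\Omega$ and $\{s=0\}$ is crossed transversally in finite $\xi$-time; continuous dependence on $\kappa$ (Proposition~\ref{prop_half_trajectories}~C) then propagates these open properties. The two steps above show that $K_{\mathrm{sink}}$ contains a neighborhood of $0^+$ and $K_{\mathrm{exit}}$ a neighborhood of $+\infty$, so both are nonempty. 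As $(0,\infty)$ is connected, $K_{\mathrm{sink}}\cup K_{\mathrm{exit}}$ cannot cover it, and any $\kappa(v)$ in the complement has $\gamma^-_{\kappa(v)}$ ending at the remaining saddle $u^+_1(v)$, yielding the required connection. The main technical obstacle is the large-$\kappa$ analysis: one must confirm that the trajectory ``falls off'' the right nullcline branch at the fold $c=c_2$ (an exchange-of-stability situation at a turning point) before rapidly hitting $\{s=0\}$. The explicit lower bound on $|ds/dc|$ inside the nullcline-free strip furnishes a barrier argument that sidesteps the full Fenichel machinery.
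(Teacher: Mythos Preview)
Your argument is correct and constitutes a genuinely different implementation of the shooting idea from the paper's. The paper tracks \emph{both} half-trajectories---the one leaving $u_2^-(v)$ and the one entering $u_1^+(v)$---and uses explicit linear barrier functions $l_1$, $l_2$ to show that for large $\kappa$ the outgoing trajectory lies below the incoming one while for small $\kappa$ it lies above; continuity of their difference at a fixed $c_0$ then yields the connecting $\kappa$. You instead follow only $\gamma^-_\kappa$ and invoke the trichotomy of Proposition~\ref{prop1}~C together with connectedness of $(0,\infty)$: the sets of $\kappa$ for which the trajectory lands at the sink or at $\{s=0\}$ are open and nonempty, so some $\kappa$ yields the saddle connection. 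The large-$\kappa$ step is essentially identical in both proofs (the slope bound on the nullcline-free strip $(c_1,c_2)$); your preamble about shadowing the right branch and the fold at $c=c_2$ is unnecessary, as you yourself acknowledge, since the bound applies to any trajectory crossing the strip regardless of its entry point. The small-$\kappa$ step is where the two proofs really diverge: the paper's linear barrier $l_2$ is entirely elementary, while your Tikhonov/matching argument is heavier and needs some care at the endpoints (the trajectory emanates from a saddle lying on the slow manifold $c=c^-$, and the inner-layer matching near $c=c^+$ must be justified to conclude landing in the basin of $u_2^+(v)$ rather than on the stable manifold of $u_1^+(v)$---this follows since for small $\kappa$ that stable manifold is itself near the vertical line $s=s_1^+(v)<s_2^-(v)$, but you should say so). What your formulation buys is a cleaner topological packaging that does not rely on comparing two trajectories pointwise, and would adapt more readily to situations where monotone dependence on $\kappa$ is unavailable.
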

\begin{proof}

First, by Proposition \ref{prop_half_trajectories} (points A, B) there exist a unique trajectory leaving $u_2^-(v)$ and a unique trajectory entering $u_1^+(v)$. We are looking for values of $\kappa$ for which these two trajectories intersect, and thus coincide.

Second, by the definition of Type II portrait there exists an interval $[c_1, c_2] \subset (c^+, c^-)$ where
\[
f(s, c) - v( s + d_1) < 0, \qquad \forall s\in(0,1), \forall c\in{[c_1,c_2]},
\]
and thus for some $\varepsilon>0$ we have
\begin{align*}
\kappa^{-1} \dfrac{s_\xi}{c_\xi} = \dfrac{(v d_1) (f(s, c) - v( s + d_1))}{A(s,c) (d_1 c - d_2 - a(c))} > \varepsilon > 0 \qquad \forall s\in(0,1), \forall c\in{[c_1,c_2]}.
\end{align*}
Therefore, there exists $\kappa_* = (\varepsilon \cdot (c_2 - c_1))^{-1}$, such that for all $\kappa > \kappa_*$ and any trajectory $s^{\kappa,v}(c)$
\[
\dfrac{d}{dc}s^{\kappa,v}(c) > \dfrac{1}{c_2 - c_1} \qquad  \forall c\in{[c_1,c_2]}.
\]
Now, if we consider the linear function 
\[
l_1(c) =  \dfrac{c - c_1}{c_2 - c_1},
\]
then the trajectory from $u_2^-(v)$ and the trajectory to $u_1^+(v)$ cannot intersect its graph, thus the trajectory from $u_2^-(v)$ goes below the trajectory to $u_1^+(v)$ (see Fig. \ref{fig:lemma_existence_kappa}a).

\begin{figure}[H]
    \centering
    \includegraphics[width=0.49\textwidth]{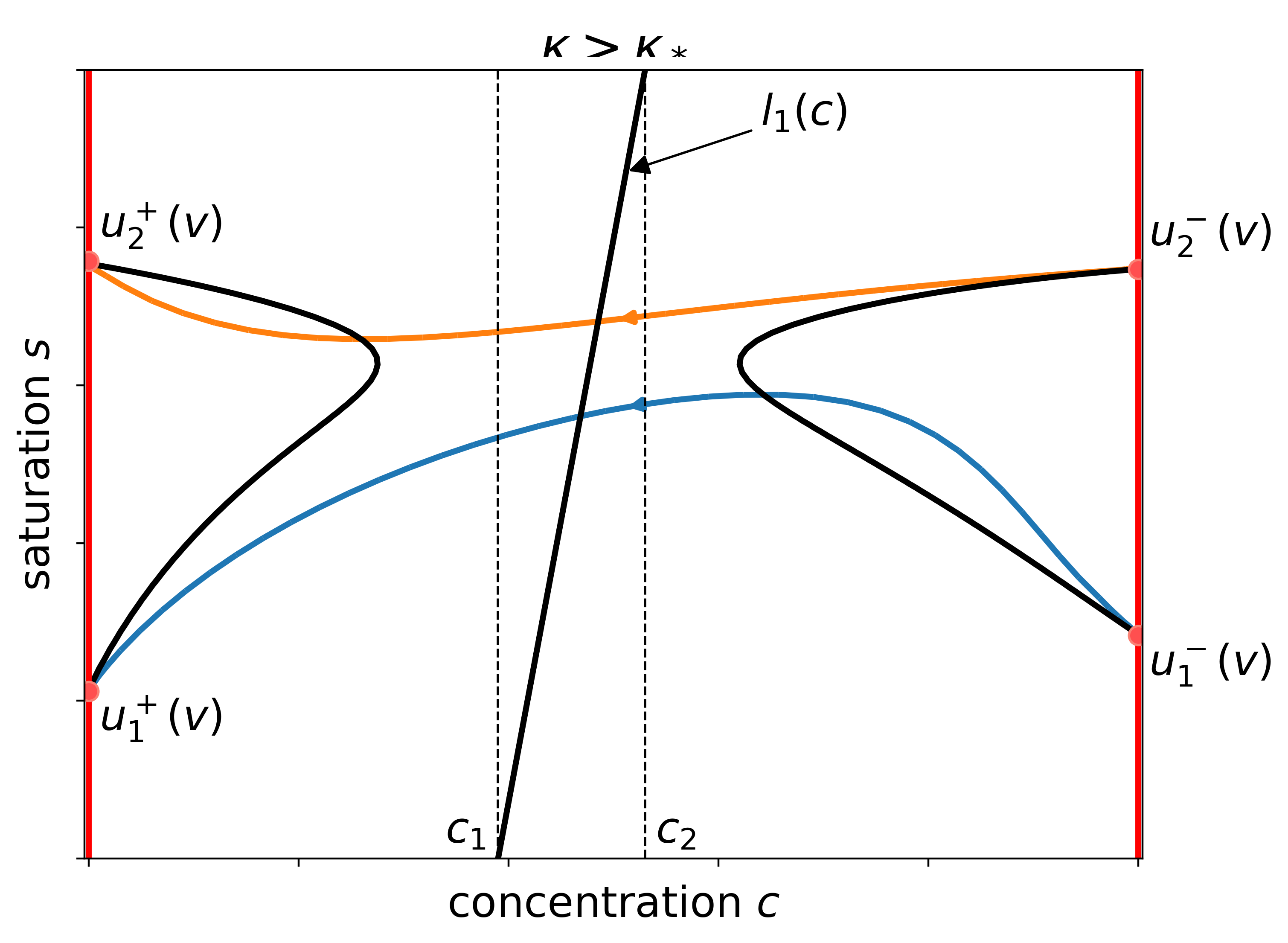}
    \includegraphics[width=0.49\textwidth]{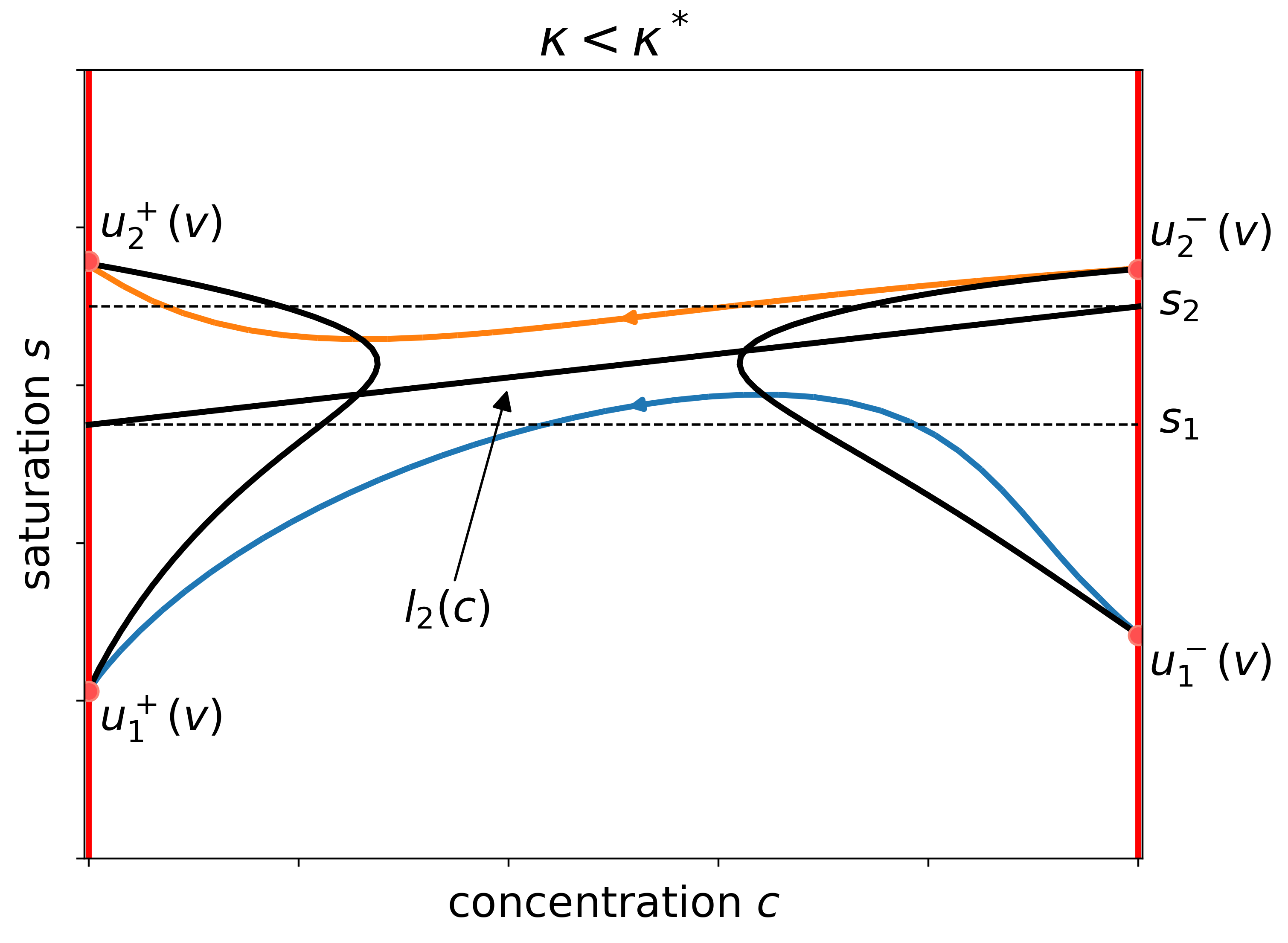}\\
    (a)\qquad\qquad\hfil\qquad\qquad (b)\hfil
    \caption{schematic proof of Lemma~\ref{lm:existence_kappa}}
    \label{fig:lemma_existence_kappa}
\end{figure}

Third, by the definition of Type II portrait we can fix a pair of points 
\[
s_1 \in (s_1^+(v), \min(s_2^+(v), s_2^-(v)) ), \quad 
s_2 \in (\max(s_1^+(v), s_1^-(v)), s_2^-(v) ), 
\]
and construct the linear function 
\[
l_2(c) =  \dfrac{s_2 - s_1}{c^- - c^+} (c - c^+) + s_1.
\]
Then on this line we have
\[
\kappa^{-1} \dfrac{s_\xi}{c_\xi} = \dfrac{(v d_1) (f(s, c) - v( s + d_1))}{A(s,c) (d_1 c - d_2 - a(c))} < M < +\infty \qquad 
\text{for all points } (l_2(c), c).
\]
Thus, there exists $\kappa^* = (M \cdot (c^- - c^+))^{-1} \cdot (s_2 - s_1)$, such that for all $\kappa < \kappa^*$ and for any trajectory $s^{\kappa,v}(c)$ intersecting the line $l_2(c)$ we have
\[
\dfrac{d}{d c}s^{\kappa,v}(c) < \dfrac{s_2 - s_1}{c^- - c^+} \qquad 
\text{for all points } (l_2(c), c).
\]
Thus, the trajectory from $u_2^-(v)$ and the trajectory to $u_1^+(v)$ cannot intersect its graph, and the trajectory from $u_2^-$ stays above the trajectory to $u_1^+$ (see Fig. \ref{fig:lemma_existence_kappa}b).

Finally, from continuous dependence on $\kappa$ (Proposition \ref{prop_half_trajectories}, point C) we conclude that there exists $\kappa \in [\kappa^*, \kappa_*]$ for which the trajectories coincide.
\end{proof}

\begin{lemma}
\label{lm:uniqueness_kappa}
$\kappa(v)$ is unique for every $v\in(v_{\min}, v_{\max})$.
\end{lemma}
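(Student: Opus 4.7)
The plan is to argue by contradiction, exploiting the two monotonicity statements already in place: the pointwise monotonicity of each half-trajectory in $\kappa$ near its saddle (Proposition \ref{prop_half_trajectories} C) and the strict monotonicity of the slope $s_\xi/c_\xi$ in $\kappa$ at any point where it is positive (Proposition \ref{prop1} E). These fit together because, by Proposition \ref{prop_half_trajectories} D, any realized saddle-to-saddle connection is a strictly increasing function $s^{\kappa,v}(c)$ on the whole interval $(c^+,c^-)$, so the positive-slope hypothesis of Proposition \ref{prop1} E holds at every point of such a connection.

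Suppose for contradiction that $\kappa_1<\kappa_2$ both produce a saddle-to-saddle connection from $u_2^-(v)$ to $u_1^+(v)$, and write the graphs as $\gamma_i(c)=s^{\kappa_i,v}(c)$. The first step is to extract the boundary behaviour from Proposition \ref{prop_half_trajectories} C: on a one-sided neighbourhood of $c^-$ the trajectory from $u_2^-$ strictly decreases with $\kappa$, so $\gamma_2<\gamma_1$ there, while on a one-sided neighbourhood of $c^+$ the trajectory into $u_1^+$ strictly increases with $\kappa$, so $\gamma_2>\gamma_1$. Hence $\gamma_2-\gamma_1$ changes sign on $(c^+,c^-)$, the zero set is nonempty, closed, and bounded away from both endpoints, and it admits a maximum, which I call $\bar c$.

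The second step is a local contradiction at $\bar c$. Both $\gamma_i$ have positive slope at $\bar c$ by Proposition \ref{prop_half_trajectories} D, so Proposition \ref{prop1} E applied at the common point $(\gamma_1(\bar c),\bar c)$ yields $\gamma_2'(\bar c)>\gamma_1'(\bar c)$ strictly. Consequently $\gamma_2-\gamma_1$ is strictly increasing through zero at $\bar c$, so $\gamma_2>\gamma_1$ just to the right of $\bar c$. This contradicts the choice of $\bar c$ as the maximum zero together with $\gamma_2<\gamma_1$ near $c^-$.

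The main obstacle I anticipate is purely bookkeeping: one must verify that the monotonicity in Proposition \ref{prop_half_trajectories} C is genuinely strict on a one-sided neighbourhood of each saddle (otherwise the sign change of $\gamma_2-\gamma_1$ near the endpoints is not guaranteed), and that the positive-slope conclusion in Proposition \ref{prop_half_trajectories} D really applies throughout $(c^+,c^-)$ so that Proposition \ref{prop1} E can be invoked at an arbitrary crossing with a strict inequality. Both facts are already packaged into the quoted propositions, so once they are assembled no additional analysis of the dynamical system is required.
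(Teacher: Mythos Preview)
Your proposal is correct and follows essentially the same approach as the paper: both argue by contradiction, use Proposition~\ref{prop_half_trajectories}~C to get opposite ordering of the two connections near $c^+$ and $c^-$, locate a crossing where Proposition~\ref{prop_half_trajectories}~D guarantees positive slope, and then invoke Proposition~\ref{prop1}~E for the contradictory slope inequality. Your explicit choice of the largest zero of $\gamma_2-\gamma_1$ is a slightly more careful way to pin down the crossing direction than the paper's terse ``there exists a point $u_0$ with $\frac{d}{dc}s^{\kappa_1,v}(c_0)\ge\frac{d}{dc}s^{\kappa_2,v}(c_0)$'', but the content is identical.
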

\begin{proof}
Suppose there are $\kappa_1 < \kappa_2$ for which a trajectory exists. Due to Proposition \ref{prop_half_trajectories} (point C) we know that trajectories depend monotonically on $\kappa$ in the vicinities of the critical points, but the trajectories leaving $u_2^-(v)$ and entering $u_1^+(v)$ have the opposite monotonicity. Therefore, they must intersect at some $c\in(0,1)$.

Specifically, for trajectories $s^{\kappa_1,v}(c)$ and $s^{\kappa_2,v}(c)$ connecting the saddle points $u_2^-(v)$ and $u_1^+(v)$, we have
\[
s^{\kappa_1,v}(c) \leq s^{\kappa_2,v}(c)  \text{ near } u_1^+(v),
\]
\[
s^{\kappa_1,v}(c) \geq s^{\kappa_2,v}(c)  \text{ near } u_2^-(v).
\]
Thus, there exists a point $u_0=(s_0, c_0)$, such that 
\[
s^{\kappa_1,v}(c_0) = s^{\kappa_2,v}(c_0) = s_0, \quad \dfrac{d}{dc}s^{\kappa_1,v}(c_0) \geq \dfrac{d}{dc}s^{\kappa_2,v}(c_0)>0,
\]
and these slopes are positive due to Proposition~\ref{prop_half_trajectories} (point D) which contradicts Proposition~\ref{prop1} (point~E).

\begin{figure}[H]
    \centering
    \includegraphics[width=0.49\textwidth]{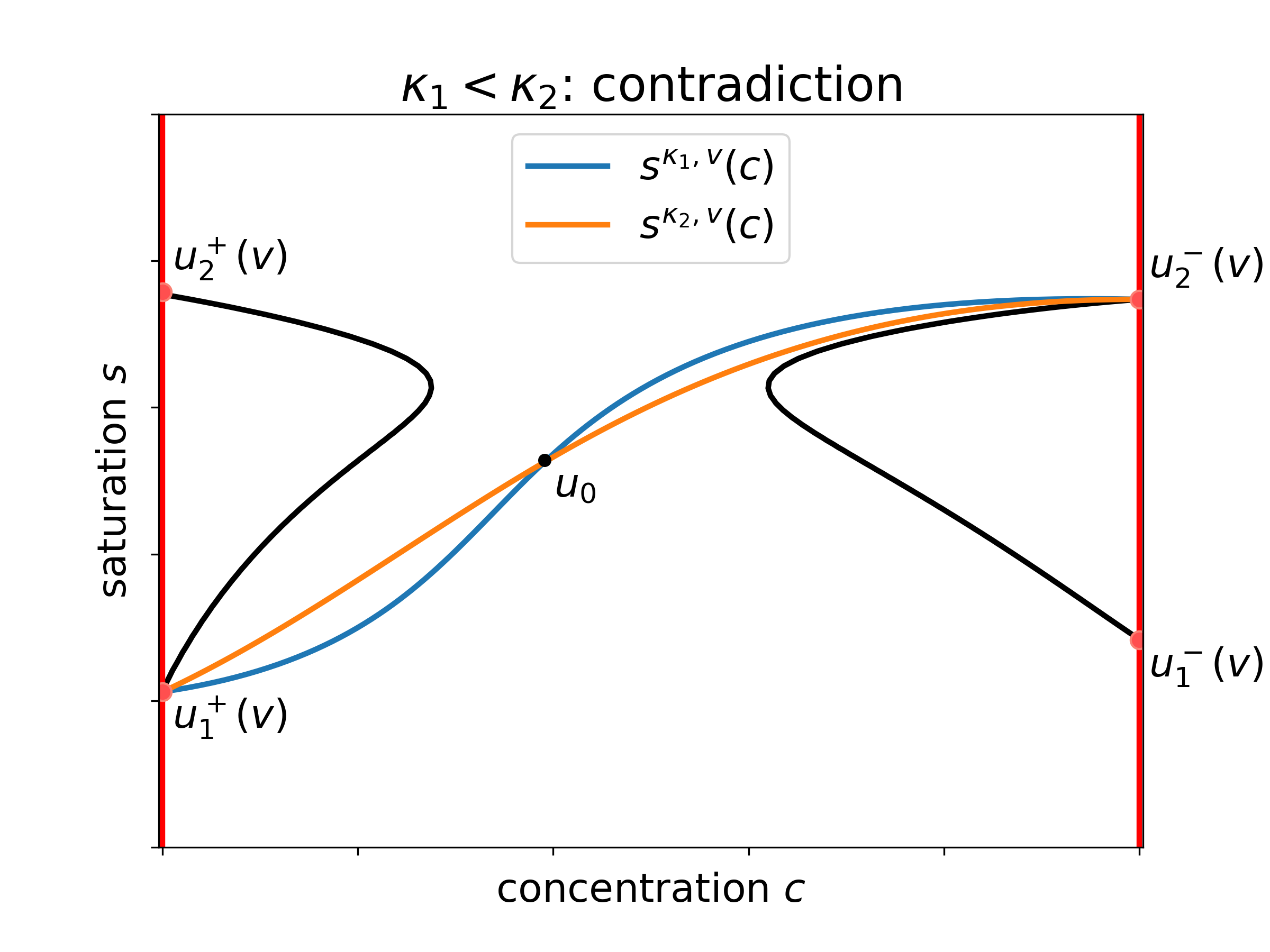}
        \includegraphics[width=0.49\textwidth]{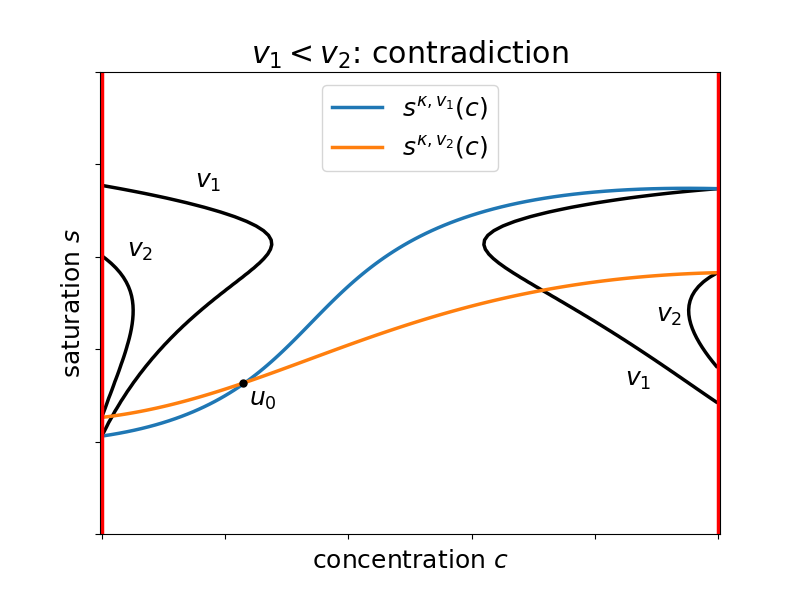}\\
    (a)\qquad\qquad\hfil\qquad\qquad (b)\hfil
    \caption{(a) schematic proof of  Lemma~\ref{lm:uniqueness_kappa}; (b) schematic proof of Lemma~\ref{lemma7}}
    \label{fig:lemma_uniqueness_kappa}
\end{figure}


\end{proof}

\begin{lemma}\label{lemma7}
For a fixed value of $\kappa>0$ there cannot exist more than one value of $v$, such that there exists a trajectory from $u_2^-(v)$ to $u_1^+(v)$. Thus, $\kappa$ is monotonous in $v$.
\end{lemma}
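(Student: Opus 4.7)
The plan is to mimic the argument of Lemma~\ref{lm:uniqueness_kappa}, but now varying the parameter $v$ instead of $\kappa$. The key ingredients are exactly the monotone dependence on $v$ near the saddles (Proposition~\ref{prop_half_trajectories}, point C), the fact that any saddle-to-saddle trajectory has strictly positive slope on all of $(c^+,c^-)$ (Proposition~\ref{prop_half_trajectories}, point D), and the strict monotonicity of a positive slope $s_\xi/c_\xi$ with respect to $v$ (Proposition~\ref{prop1}, point E).

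So I would argue by contradiction: suppose $v_1<v_2$ both admit a trajectory from $u_2^-(\cdot)$ to $u_1^+(\cdot)$ for the same $\kappa$. Denote these orbits by their graphs $s^{\kappa,v_1}(c)$ and $s^{\kappa,v_2}(c)$. By Proposition~\ref{prop_half_trajectories}(C), in some left neighborhood of $c^-$ the trajectory leaving $u_2^-(v)$ is pointwise decreasing in $v$, so
\[
s^{\kappa,v_1}(c)>s^{\kappa,v_2}(c) \qquad \text{near } c^-,
\]
while in some right neighborhood of $c^+$ the trajectory entering $u_1^+(v)$ is pointwise increasing in $v$, so
\[
s^{\kappa,v_1}(c)<s^{\kappa,v_2}(c) \qquad \text{near } c^+.
\]
By the intermediate value theorem applied to $\phi(c):=s^{\kappa,v_1}(c)-s^{\kappa,v_2}(c)$, there exists $c_0\in(c^+,c^-)$ with $\phi(c_0)=0$ and, picking (for instance) the largest such $c_0$, $\phi\geq 0$ on $[c_0,c^-)$. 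Hence at $u_0=(s_0,c_0):=(s^{\kappa,v_1}(c_0),c_0)$ we have $\phi'(c_0)\leq 0$, i.e.
\[
\tfrac{d}{dc}s^{\kappa,v_1}(c_0)\leq \tfrac{d}{dc}s^{\kappa,v_2}(c_0).
\]
On the other hand, Proposition~\ref{prop_half_trajectories}(D) guarantees both slopes are strictly positive at $u_0$, and then Proposition~\ref{prop1}(E) forces the strict reverse inequality $\tfrac{d}{dc}s^{\kappa,v_1}(c_0)>\tfrac{d}{dc}s^{\kappa,v_2}(c_0)$, the desired contradiction.

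For the final clause, monotonicity of $\kappa(v)$ now follows formally: Lemma~\ref{lm:existence_kappa} and Lemma~\ref{lm:uniqueness_kappa} make $\kappa:(v_{\min},v_{\max})\to(0,\infty)$ a well-defined function, while the result just proved shows it is injective. Continuity of $\kappa(\cdot)$ (which comes from continuous dependence of the saddle separatrices on $v$, Proposition~\ref{prop_half_trajectories}(C)) then upgrades injectivity to strict monotonicity.

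The step I expect to be most delicate is the sign-chasing near the saddles: one must be careful that the ``pointwise monotonicity in $v$'' in Proposition~\ref{prop_half_trajectories}(C) is only guaranteed in some neighborhood of the critical point, and that the critical points $u_2^-(v)$, $u_1^+(v)$ themselves drift with $v$, so the comparison at $c$ slightly below $c^-$ and slightly above $c^+$ has to combine the motion of the endpoint with the monotonicity of the separatrix. Once this sign bookkeeping is in place, the rest is a direct transcription of the Lemma~\ref{lm:uniqueness_kappa} argument with $\kappa$ replaced by $v$.
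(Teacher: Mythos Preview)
Your approach is exactly the paper's: argue by contradiction, use the opposite monotonicity in $v$ of the two saddle separatrices (Proposition~\ref{prop_half_trajectories}(C)) to force a crossing, then invoke Proposition~\ref{prop1}(E) against the derivative inequality at the crossing. The structure is correct.

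However, you have flipped two signs in the execution. First, if $c_0$ is the largest zero of $\phi$ and $\phi\geq 0$ on $[c_0,c^-)$, then $\phi'(c_0)\geq 0$ (not $\leq 0$): for $c$ slightly above $c_0$ one has $\phi(c)\geq 0=\phi(c_0)$, so the right difference quotient is nonnegative. This gives $\tfrac{d}{dc}s^{\kappa,v_1}(c_0)\geq \tfrac{d}{dc}s^{\kappa,v_2}(c_0)$. Second, Proposition~\ref{prop1}(E) says a positive slope \emph{strictly increases} in $v$, so for $v_1<v_2$ it yields $\tfrac{d}{dc}s^{\kappa,v_1}(c_0)<\tfrac{d}{dc}s^{\kappa,v_2}(c_0)$, not the reverse. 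The two errors happen to cancel, so you still reach a contradiction, but each displayed inequality as written is wrong. With both signs corrected your argument coincides with the paper's proof verbatim. Your closing worry about ``sign-chasing near the saddles'' is already absorbed into Proposition~\ref{prop_half_trajectories}(C): the pointwise monotonicity there is stated for the trajectory as a function of $c$, with the drift of the saddle endpoint already built in.
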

\begin{proof}

Similar to Lemma~\ref{lm:uniqueness_kappa}, suppose there are $v_1 < v_2$ for which a trajectory exists. Due to Proposition \ref{prop_half_trajectories} (point C) we know that trajectories depend monotonically on $v$ in the vicinities of the critical points, but the trajectories leaving $u_2^-(v)$ and entering $u_1^+(v)$ have the opposite monotonicity. 

Suppose 
$s^{\kappa,v_1}(c)$ and $s^{\kappa,v_2}(c)$ are the trajectories connecting saddle points. By Proposition \ref{prop_half_trajectories} (point~C)
we have
\[
s^{\kappa,v_1}(c) \leq s^{\kappa,v_2}(c)  \text{ near } c^+,
\]
\[
s^{\kappa,v_1}(c) \geq s^{\kappa,v_2}(c)  \text{ near } c^-.
\]
Thus, there exists a point $u_0=(s_0, c_0)$, such that 
\[
s^{\kappa,v_1}(c_0) = s^{\kappa,v_2}(c_0) = s_0, \quad \dfrac{d}{dc}s^{\kappa,v_1}(c_0) \geq \dfrac{d}{dc}s^{\kappa,v_2}(c_0) > 0, 
\]
and these slopes are positive due to Proposition~\ref{prop_half_trajectories} (point~D) which contradicts Proposition~\ref{prop1} (point~E).
\end{proof}

\begin{lemma}
Function $\kappa$ depends continuously on $v$.
\end{lemma}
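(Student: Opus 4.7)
The plan is to combine the monotonicity of $\kappa$ in $v$ with the pointwise continuous dependence of the half-trajectories on the parameters $(\kappa,v)$, via a standard subsequence argument. Fix $v_0\in(v_{\min},v_{\max})$ and take an arbitrary sequence $v_n\to v_0$; it suffices to show $\kappa(v_n)\to\kappa(v_0)$.

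First I would verify that $\{\kappa(v_n)\}$ is bounded and bounded away from zero for all large $n$. Since $\kappa$ is strictly monotone by Lemma~\ref{lemma7} (injectivity follows from uniqueness in Lemma~\ref{lm:uniqueness_kappa}) and $v_0$ lies strictly inside $(v_{\min},v_{\max})$, one can pick $\delta>0$ with $v_0\pm\delta\in(v_{\min},v_{\max})$ so that $\kappa(v_n)$ is trapped in $[\kappa(v_0+\delta),\kappa(v_0-\delta)]$ for all sufficiently large $n$. The lower bound is strictly positive because at $\kappa=0$ the second equation of~\eqref{eq:dyn_sys_cap_non-eq} degenerates (its right-hand side vanishes only on $c=c^\pm$), so no saddle-to-saddle connection is possible. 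Consequently every subsequence of $\{\kappa(v_n)\}$ admits a further subsequence $\kappa(v_{n_k})\to\kappa^*>0$.

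Next I would pass to the limit in the saddle-to-saddle connection. For each $k$, by Lemmas~\ref{lm:existence_kappa} and~\ref{lm:uniqueness_kappa} the function $\gamma_k(c):=s^{\kappa(v_{n_k}),v_{n_k}}(c)$ is at the same time the unique trajectory leaving $u_2^-(v_{n_k})$ and the unique trajectory entering $u_1^+(v_{n_k})$. Proposition~\ref{prop_half_trajectories}~(C) states that both of these half-trajectories depend continuously on $(\kappa,v)$ pointwise on $(c^+,c^-)$, so for every $c\in(c^+,c^-)$,
\[
\gamma_k(c)\;\longrightarrow\;s^{\kappa^*,v_0}(c),
\]
and the limit function $s^{\kappa^*,v_0}$ is simultaneously the trajectory leaving $u_2^-(v_0)$ and the trajectory entering $u_1^+(v_0)$ at parameters $(v_0,\kappa^*)$. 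Hence it is a saddle-to-saddle connection at $(v_0,\kappa^*)$, and Lemma~\ref{lm:uniqueness_kappa} forces $\kappa^*=\kappa(v_0)$. Since every convergent subsequence of $\{\kappa(v_n)\}$ has the same limit $\kappa(v_0)$, the whole sequence converges.

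The main delicate ingredient is the invocation of Proposition~\ref{prop_half_trajectories}~(C), where the base saddles $u_2^-(v)$ and $u_1^+(v)$ themselves move with $v$. Inside the Type~II regime these critical points are hyperbolic and depend smoothly on $v$ by the implicit function theorem applied to $f(s,c^\pm)=v(s+d_1)$, and the associated unstable and stable manifolds then depend continuously on $(\kappa,v)$ in the pointwise sense required above. This is what legitimises the passage to the limit and closes the argument.
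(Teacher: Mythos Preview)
Your compactness-plus-uniqueness argument is a legitimate alternative to the paper's route, but there is one soft spot. To trap $\kappa(v_n)$ you invoke strict monotonicity of $\kappa(\cdot)$ from Lemma~\ref{lemma7}; however, that lemma literally yields only injectivity of $v\mapsto\kappa(v)$, and deducing monotonicity of a real function on an interval from injectivity already requires continuity --- precisely what you are proving. The cleanest repair is to obtain the a~priori bounds directly from the barriers in the proof of Lemma~\ref{lm:existence_kappa}: the quantities $\kappa^*(v)$ and $\kappa_*(v)$ constructed there depend continuously on $v$ inside the Type~II range, hence give a uniform positive lower bound and finite upper bound for $\kappa(v_n)$ once $v_n$ is close to $v_0$. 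With that adjustment your subsequence argument goes through, and the final paragraph about hyperbolicity of the moving saddles is exactly the content of Proposition~\ref{prop_half_trajectories}~(C), so no extra work is needed there.

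For comparison, the paper's proof is more direct and avoids any appeal to global behaviour of $\kappa(\cdot)$. It fixes $c_0\in(c^+,c^-)$ and sets $h(\kappa,v):=s_1^{\kappa,v}(c_0)-s_2^{\kappa,v}(c_0)$, where $s_1^{\kappa,v}$ and $s_2^{\kappa,v}$ are the half-trajectories entering $u_1^+(v)$ and leaving $u_2^-(v)$. By Proposition~\ref{prop_half_trajectories}~(C), $h$ is continuous in both variables and strictly monotone in $\kappa$ near any zero, so the (topological) implicit function theorem gives continuity of $\kappa(v)$ immediately. Both approaches ultimately rest on the same ingredient, Proposition~\ref{prop_half_trajectories}~(C); the paper's is shorter and self-contained at this stage, while yours is the standard limit-point argument and would generalise more readily to settings where strict $\kappa$-monotonicity of $h$ is unavailable but uniqueness still holds.
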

\begin{proof}
Due to Proposition~\ref{prop_half_trajectories} (point~C) the trajectory depends continuously on both~$\kappa$ and~$v$, thus it creates a continuous dependence of~$\kappa(v)$. More rigorously, let $s_1^{\kappa, v}(c)$ be the trajectory ending in $u_1^+(v)$ and let $s_2^{\kappa, v}(c)$ be the trajectory beginning in $u_2^-(v)$. Then there exists a trajectory connecting $u_1^+(v)$ and $u_2^-(v)$ if and only if $s_1^{\kappa(v),v}(c) = s_2^{\kappa(v),v}(c)$. Now if we fix any point $c_0 \in (c^+, c^-)$ then $\kappa(v)$ is the solution of the implicit function equation $s_1^{\kappa(v),v}(c_0) - s_2^{\kappa(v),v}(c_0) = 0$. According to Proposition~\ref{prop_half_trajectories} (point~C) the function $h(\kappa, v) := s_1^{\kappa,v}(c_0) - s_2^{\kappa,v}(c_0)$ is continuous and strictly monotonous in~$\kappa$ in the vicinity of any given point $(\kappa(v_0), v_0)$, thus according to the implicit function theorem the equation $h(\kappa, v) = 0$ corresponds to the continuous solution $\kappa(v)$.
\end{proof}


\begin{figure}[H]
    \centering
    \includegraphics[width=0.49\textwidth]{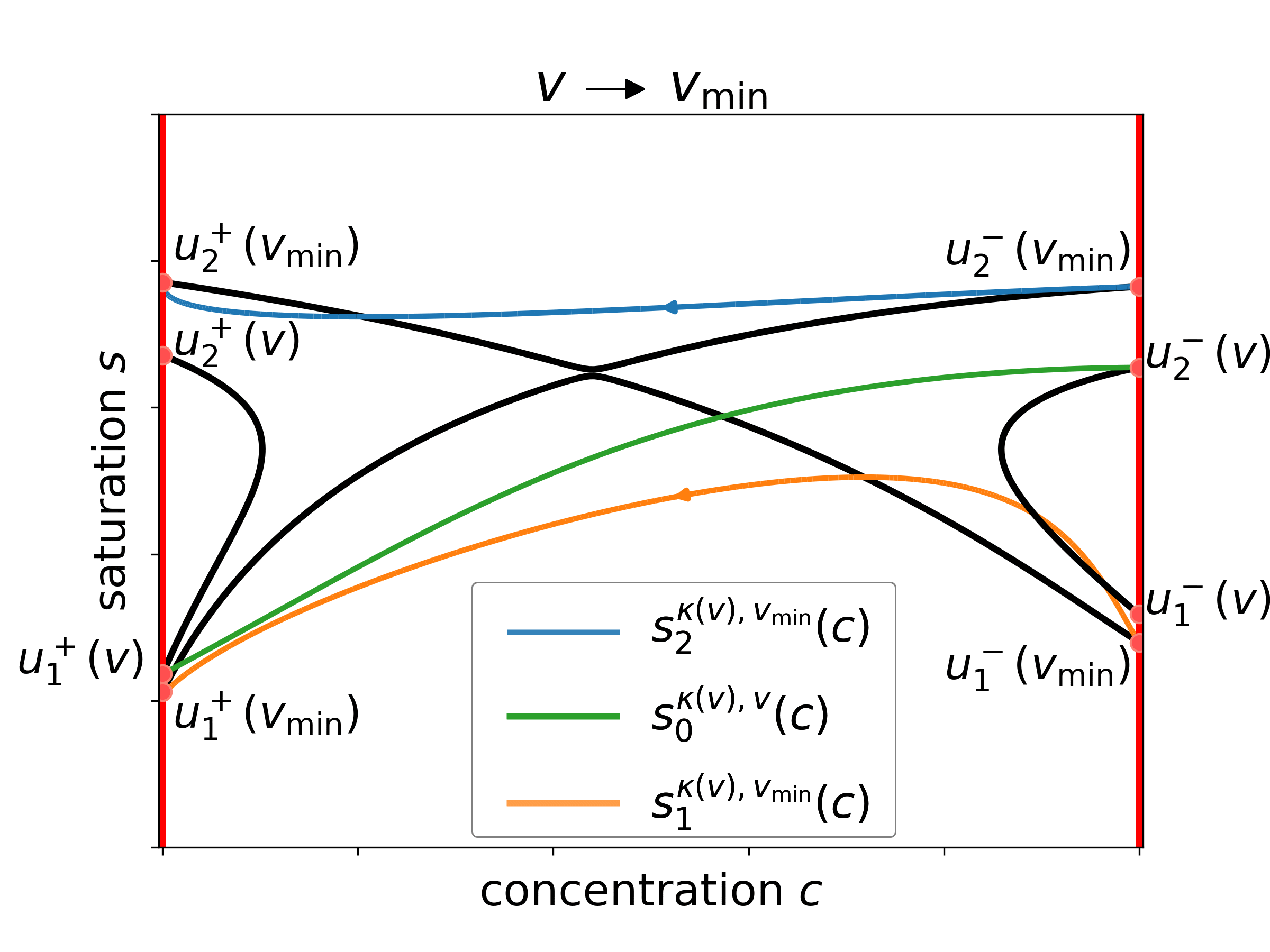}
    \caption{schematic proof of Lemma~\ref{lm:kappa_infinity}}
    \label{fig:lemma_kappa_infinity}
\end{figure}

\begin{lemma}
\label{lm:kappa_infinity}
$\kappa \to \infty$ as $v\to v_{\min}+0$.
\end{lemma}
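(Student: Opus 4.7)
The plan is to argue by contradiction using compactness and the strict slope positivity of Proposition~\ref{prop_half_trajectories}(D). Suppose $\kappa(v)$ stays bounded as $v \to v_{\min}^+$; by Lemma~\ref{lemma7} (monotonicity of $\kappa$ in $v$) this gives $\kappa(v) \leq K$ for some finite $K$ on a right-neighbourhood of $v_{\min}$. Pick any sequence $v_n \to v_{\min}^+$ and, passing to a subsequence, assume $\kappa_n := \kappa(v_n) \to \kappa_0 \in [0, K]$. By Proposition~\ref{prop_half_trajectories}(A)--(C), the half-trajectory leaving $u_2^-(v)$ and the half-trajectory entering $u_1^+(v)$ depend continuously on $(\kappa, v)$ pointwise in $c \in (c^+, c^-)$; since these two half-trajectories coincide for each $n$, their common pointwise limit is simultaneously the half-trajectory from $u_2^-(v_{\min})$ and the one into $u_1^+(v_{\min})$ at parameter $\kappa_0$. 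In particular, the limit provides a saddle-to-saddle connection at $v = v_{\min}$ with finite $\kappa_0$.

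It therefore suffices to rule out such a connection. Suppose for contradiction a connecting trajectory $s^{\kappa_0, v_{\min}}(c)$ exists. By Proposition~\ref{prop_half_trajectories}(D) its slope $ds/dc$ is strictly positive on $(c^+, c^-)$. At $v = v_{\min}$ the black nullcline degenerates into an upper cap over $[c_*, c^-]$ with branches $s_1^-(c) < s_2^-(c)$ and a lower cap over $[c^+, c_*]$ with branches $s_1^+(c) < s_2^+(c)$, meeting only at the tangent point $(s_*, c_*)$. The strict positivity of $ds/dc$ forbids the trajectory from entering either cap interior (where $s_\xi > 0$ and $c_\xi < 0$ give $ds/dc < 0$), from passing through $(s_*, c_*)$ (where $s_\xi = 0$ forces $ds/dc = 0$), and from tangentially touching the upper branches $s_2^\pm(c)$ (which would also force $ds/dc = 0$ at the contact). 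The eigenvector calculation at $u_2^-(v_{\min})$ shows that the trajectory's initial slope $f_c/(A\lambda_2 + v_{\min} - f_s)$ is strictly smaller than the nullcline slope $f_c/(v_{\min} - f_s)$ (both are positive), so the trajectory starts strictly above $s_2^-(c)$. The non-crossing conclusion then forces it to remain strictly above $s_2^-(c)$ on $(c_*, c^-)$ and, after reaching $c = c_*$ with $s > s_*$, strictly above $s_2^+(c)$ on $(c^+, c_*)$. Hence $\lim_{c \to c^+} s(c) \geq s_2^+(v_{\min}) > s_1^+(v_{\min})$, contradicting the prescribed endpoint $u_1^+(v_{\min}) = (s_1^+(v_{\min}), c^+)$.

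The main technical hurdle is justifying the non-crossing claims globally, including across the degenerate tangent point $(s_*, c_*)$; both reduce to the observation that a crossing or tangential contact with the nullcline would yield a point where $ds/dc$ vanishes, which Proposition~\ref{prop_half_trajectories}(D) prohibits on the open interval $(c^+, c^-)$. This proposition remains applicable at the limit value $v = v_{\min}$ because the saddles $u_2^-(v_{\min})$ and $u_1^+(v_{\min})$ are non-degenerate (the fold occurs at the interior point $(s_*, c_*)$, not at $c = c^\pm$), so its hypotheses are preserved in the limit.
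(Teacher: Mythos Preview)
Your proof is correct and follows the same overall strategy as the paper: assume a finite limit, pass to $v=v_{\min}$, and derive a contradiction by showing that no saddle-to-saddle connection can exist there. The paper reaches the contradiction via a sandwiching argument (using the monotonicity of the half-trajectories in $v$ from Proposition~\ref{prop_half_trajectories}(C)) and then simply asserts that the half-trajectory into $u_1^+(v_{\min})$ lies strictly below the Type~I-II nullcline while the half-trajectory out of $u_2^-(v_{\min})$ lies strictly above it. Your route is slightly different and in fact more explicit on exactly this last point: you produce a limiting connecting orbit directly from continuity, invoke Proposition~\ref{prop_half_trajectories}(D) to force $ds/dc>0$ on $(c^+,c^-)$, and use the eigenvector comparison at $u_2^-(v_{\min})$ together with the non-crossing observation to keep the orbit above the upper nullcline branch all the way to $c^+$. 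This supplies precisely the detail the paper leaves to the reader.

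Two minor remarks. First, you allow $\kappa_0=0$ in your subsequence, where Proposition~\ref{prop_half_trajectories} is not stated; since $\kappa$ is monotone by Lemma~\ref{lemma7} you could take the monotone limit instead and observe it is positive, or dispose of $\kappa_0=0$ separately (the second equation degenerates and no connection can exist). Second, your description of the Type~I-II nullcline as ``two caps meeting at a point'' is slightly imprecise: at $(s_*,c_*)$ one has both $f_s=v_{\min}$ and $f_c=0$ (the latter because $v_{\min}$ is the \emph{minimum} tangent slope over $c$), so $(s_*,c_*)$ is a saddle of $F=f-v_{\min}(s+d_1)$ and the level set is locally an X rather than two tangent arcs. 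This does not affect your argument, however, since all you use is that the upper branch $s_2(c)$ is continuous with $s_2(c_*)=s_*$ and that touching it forces $ds/dc=0$.
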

\begin{proof}
Suppose there exists a finite limit $\kappa(v)\to \hat{\kappa}<\infty$ as $v\to v_{\min}$.
Similar to the proof of continuity,  let $s_1^{\kappa, v_{\min}}(c)$ be the trajectory ending in $u_1^+(v_{\min})$ and let $s_2^{\kappa, v_{\min}}(c)$ be the trajectory beginning in $u_2^-(v_{\min})$. 
Let $s_0^{\kappa,v}(c)$ be the trajectory connecting $u_1^+(v)$ and $u_2^-(v)$. Then according to Proposition \ref{prop_half_trajectories} (point~C) the trajectory $s_0^{\kappa,v}(c)$ is sandwiched between $s_1^{\kappa(v), v_{\min}}(c)$ and $s_2^{\kappa(v), v_{\min}}(c)$ and all three trajectories get closer as $v\to v_{\min}$ due to continuous dependence on $\kappa$ and $v$, i.e.
\[
\lim\limits_{v\to v_{\min}} s_1^{\kappa(v), v_{\min}}(c) - s_0^{\kappa(v), v}(c) = 0,\qquad \forall c\in(c^+,c^-),
\]
\[
\lim\limits_{v\to v_{\min}} s_2^{\kappa(v), v_{\min}}(c) - s_0^{\kappa(v), v}(c) = 0,\qquad \forall c\in(c^+,c^-),
\]
thus
\[
\lim\limits_{v\to v_{\min}} s_1^{\kappa(v), v_{\min}}(c) - s_2^{\kappa(v), v_{\min}}(c) = 0,\qquad \forall c\in(c^+,c^-),
\]
which cannot occur for any finite value of $\hat{\kappa}$, since $s_1^{\hat{\kappa}, v_{\min}}(c)$ goes strictly below the black line connecting $u_1^+(v_{\min})$ and $u_2^-(v_{\min})$, and $s_2^{\hat{\kappa}, v_{\min}}(c)$ goes strictly above the same line.


\end{proof}


When $v$ tends to $v_{\max}$ from below portrait of Type~II evolves into either portrait of Type II-III or Type II-IV.

\begin{lemma}
If we have Type II-III, then $\kappa \to 0$ as $v \to v_{\max}-0$.
\end{lemma}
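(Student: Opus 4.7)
The plan is to derive a direct upper bound of the form $\kappa(v)\le C\bigl(s^-_2(v)-s^+_1(v)\bigr)$ for $v$ close to $v_{\max}$ and then invoke the Type II-III defining equality $s^+_1(v_{\max})=s^-_2(v_{\max})=:s_*$ to conclude. This avoids any contradiction argument and produces an explicit rate.

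First I would extract the geometric input of Type II-III. By the definition of $v_{\max}$ in this regime, the line $L(s)=v_{\max}(s+d_1)$ passes through the intersection point of $f(\cdot,c^+)$ and $f(\cdot,c^-)$, so $f(s_*,c^+)=f(s_*,c^-)=v_{\max}(s_*+d_1)$. Assumption (F4) makes $c\mapsto f(s_*,c)$ first decreasing and then increasing on $[c^+,c^-]$, hence its equal boundary values are its maximum and $f(s_*,c)<v_{\max}(s_*+d_1)$ strictly on $(c^+,c^-)$. Fix any closed subinterval $[c_1,c_2]\subset(c^+,c^-)$. By continuity there exists $\delta>0$ with $v_{\max}(s_*+d_1)-f(s_*,c)\ge\delta$ on $[c_1,c_2]$; and the concavity of $a$ (A3) gives $d_1c-d_2-a(c)<0$ strictly on $(c^+,c^-)$, so also $|d_1c-d_2-a(c)|\ge\delta$ on $[c_1,c_2]$ after shrinking $\delta$ if needed.

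Next, Proposition~\ref{prop_half_trajectories}(D) tells us the connecting trajectory $s^{\kappa(v),v}(c)$ is monotone with positive slope, hence squeezed: $s^{\kappa(v),v}(c)\in[s^+_1(v),s^-_2(v)]$ for all $c\in[c^+,c^-]$. Combined with $s^+_1(v),s^-_2(v)\to s_*$ (continuity of the critical-point coordinates in $v$), this yields uniform convergence $s^{\kappa(v),v}(c)\to s_*$ as $v\to v_{\max}-0$. Writing the slope along the trajectory via the ODE
\[
\frac{d}{dc}s^{\kappa(v),v}(c)=\kappa(v)\cdot\frac{vd_1\bigl(f(s,c)-v(s+d_1)\bigr)}{A(s,c)\bigl(d_1c-d_2-a(c)\bigr)}\bigg|_{s=s^{\kappa(v),v}(c)},
\]
the previous paragraph together with continuity of $f$ and $A$ imply the existence of a constant $M>0$ and of $v_0<v_{\max}$ such that on $[c_1,c_2]$, for all $v\in(v_0,v_{\max})$, the factor multiplying $\kappa(v)$ on the right is at least $M$ (both numerator and denominator are negative and bounded away from zero in absolute value near $s=s_*$).

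Integrating the resulting inequality $\tfrac{d}{dc}s^{\kappa(v),v}(c)\ge\kappa(v)M$ over $[c_1,c_2]$ gives
\[
s^-_2(v)-s^+_1(v)\ge s^{\kappa(v),v}(c_2)-s^{\kappa(v),v}(c_1)\ge\kappa(v)M(c_2-c_1),
\]
hence $\kappa(v)\le\bigl(M(c_2-c_1)\bigr)^{-1}\bigl(s^-_2(v)-s^+_1(v)\bigr)\to 0$, which is the claim. The main technical point is securing the uniform positive lower bound $M$ on the slope factor: this hinges simultaneously on the squeeze of the trajectory toward $s=s_*$ (Proposition~\ref{prop_half_trajectories}(D) together with the pinching $s^+_1(v),s^-_2(v)\to s_*$) and on the strict inequality $f(s_*,c)<v_{\max}(s_*+d_1)$ on $(c^+,c^-)$ furnished by (F4) and the Type II-III defining condition.
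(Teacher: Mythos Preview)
Your argument is correct and follows essentially the same idea as the paper's: both exploit the identity
\[
s^-_2(v)-s^+_1(v)=\int \frac{d}{dc}s^{\kappa(v),v}(c)\,dc=\kappa(v)\int \frac{vd_1\bigl(f(s,c)-v(s+d_1)\bigr)}{A(s,c)\bigl(d_1c-d_2-a(c)\bigr)}\,dc
\]
together with a uniform positive lower bound on the integrand, so that $s^-_2(v)-s^+_1(v)\to 0$ forces $\kappa(v)\to 0$.

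The one genuine difference is where you integrate. The paper integrates over the full interval $[c^+,c^-]$ and asserts that the integrand is ``separated from zero everywhere including $c^+$ and $c^-$''; this is delicate, since at the endpoints both numerator and denominator vanish (the trajectory lands on critical points, and $d_1c-d_2-a(c)=0$ at $c^\pm$), so the claim really concerns the limiting value of a $0/0$ quotient. You sidestep this entirely by restricting to a fixed compact subinterval $[c_1,c_2]\subset(c^+,c^-)$, where the denominator is automatically bounded away from zero by (A3), and where the squeeze $s^{\kappa(v),v}(c)\in[s^+_1(v),s^-_2(v)]\to\{s_*\}$ combined with the strict inequality $f(s_*,c)<v_{\max}(s_*+d_1)$ on $(c^+,c^-)$ (which you correctly derive from (F4) and the defining Type II-III equality) gives a clean uniform lower bound on the numerator. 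This buys you a self-contained estimate without having to analyse the endpoint behaviour of the integrand; the cost is only that you use the monotonicity of the connecting orbit (Proposition~\ref{prop_half_trajectories}(D)) to justify the squeeze, which the paper's version does not explicitly invoke.
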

\begin{proof}
As we approach the Type II-III, we have $s_2^-(v) - s_1^+(v) \to +0$. At the same time, for the trajectory $s(c)=s^{\kappa,v}(c)$, connecting saddle points we have
\begin{equation}
\label{eq:delta_s_integral}
s_2^-(v) - s_1^+(v) = \int\limits_{c^+}^{c^-} s'(c) \,dc = \kappa(v) \int\limits_{c^+}^{c^-} \dfrac{(v d_1) (f(s(c), c) - v( s(c) + d_1))}{A(s(c),c) (d_1 c - d_2 - a(c))} \, dc,
\end{equation}
and the integral is separated from zero (since the integrand is separated from zero everywhere including $c^+$ and $c^-$), thus $\kappa(v) \to +0$ as well.
\end{proof}

\begin{lemma}
\label{lm:v_max_type-2-3}
If we have Type II-IV, then there exists a positive finite limit $\kappa(v) \to \kappa_{crit}>0$ as $v\to v_{\max}-0$ and for every $0 < \kappa \leq \kappa_{crit}$ we have $v(\kappa) = v_{\max}$, i.e. for $v_{\max}$ there exists a trajectory from $u_2^-(v)$ to $u_1^+(v)$ for every $0<\kappa \leq \kappa_{crit}$.
\end{lemma}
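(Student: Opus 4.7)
I will split the argument into three stages: first identify the limit value $\kappa_{crit}$ from the integral identity~\eqref{eq:delta_s_integral}, then exhibit the saddle-to-saddle-node connection at the limit parameters $(v_{\max},\kappa_{crit})$, and finally extend this connection to every smaller positive $\kappa$.

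\textbf{Stage~1 (the limit).} Without loss of generality, assume the Type~II-IV degeneration occurs at the right boundary: $u_1^+(v)$ and $u_2^+(v)$ collide into a saddle-node $(s^*,c^+)$ as $v\uparrow v_{\max}$, while $u_2^-(v)$ remains a hyperbolic saddle (the mirror case is analogous). In contrast with Type~II-III, the quantity $s_2^-(v)-s_1^+(v)$ then tends to the \emph{strictly positive} limit $\ell:=s_2^-(v_{\max})-s^*>0$. Continuous dependence (Proposition~\ref{prop_half_trajectories}(C)) provides a limiting curve $s_\ast(c)$ of the saddle-saddle trajectories; its asymptotics at the two endpoints (exponential decay at the hyperbolic saddle, centre-manifold behaviour at the saddle-node) yield integrability of the right-hand integrand in~\eqref{eq:delta_s_integral}. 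Dominated convergence then produces a positive finite limit $I$ for the integral, so $\kappa(v)\to\kappa_{crit}:=\ell/I\in(0,+\infty)$. The curve $s_\ast$ is exactly the desired saddle-to-saddle-node connection at $(v_{\max},\kappa_{crit})$, which disposes of stage~2.

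\textbf{Stage~3 (extension to small $\kappa$).} Fix $\kappa\in(0,\kappa_{crit})$ and denote by $\sigma(c):=s^{\kappa,v_{\max}}(c)$ the trajectory leaving $u_2^-(v_{\max})$. In a vicinity of $u_2^-(v_{\max})$, Proposition~\ref{prop_half_trajectories}(C) gives $\sigma(c)>s_\ast(c)$ (the trajectory from the saddle moves up as $\kappa$ decreases). Since $s_\ast$ has strictly positive slope on $(c^+,c^-)$ (by Proposition~\ref{prop_half_trajectories}(D), extended by continuity to the saddle-node endpoint), the pointwise slope comparison of Proposition~\ref{prop1}(E) forbids $\sigma$ from crossing $s_\ast$ --- precisely the non-crossing argument of Lemma~\ref{lm:uniqueness_kappa}. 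Thus $\sigma(c)>s_\ast(c)\ge s^*$ throughout $(c^+,c^-)$, while $\sigma(c)\le s_2^-(v_{\max})<1$ by monotonicity from the starting point. Hence $\sigma$ is trapped in a compact subset of $\Omega$ away from the lateral boundaries $\{s=0,1\}$. By Proposition~\ref{prop1}(C), as $c\searrow c^+$ the orbit must accumulate at a critical point on the line $\{c=c^+\}$; in Type~II-IV the only such point is the saddle-node $(s^*,c^+)$, so $v(\kappa)=v_{\max}$.

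\textbf{Main obstacle.} The genuinely delicate point is stage~3: the endpoint $(s^*,c^+)$ is non-hyperbolic, so one must justify that $\sigma$ is actually absorbed by the saddle-node rather than drifting indefinitely along its centre manifold. The clean remedy is the centre-manifold theorem together with the observation that the sign of $f_{ss}(s^*,c^+)$ dictated by the tangent-line construction of $v_{\max}$ renders the $\{s>s^*\}$ half of the centre manifold attracting; any orbit entering a small neighbourhood on that side is captured. The same centre-manifold reduction also underpins the integrability estimate invoked in stage~1.
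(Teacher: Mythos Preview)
Your overall strategy coincides with the paper's: compare the candidate trajectory with the connection at $\kappa_{crit}$, use the monotone slope comparison to prevent crossing, and conclude via the classification of trajectory endpoints. The paper works in the mirror configuration (saddle-node at $c^-$, hyperbolic saddle at $c^+$), tracking the unique trajectory entering $u_1^+(v_{\max})$ backward and showing that for $\kappa<\kappa_{crit}$ it lies below the $\kappa_{crit}$-trajectory, hence cannot begin at $\{s=1\}$ and so must begin at the saddle-node. Your forward tracking from $u_2^-$ is the symmetric version of this.

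The substantive difference is that you manufacture an obstacle that is not there. Your ``main obstacle'' --- justifying absorption at the non-hyperbolic saddle-node via centre-manifold theory --- is bypassed entirely by Proposition~\ref{prop1}(C). That proposition is purely topological: it uses only the strict monotonicity of $c$ along orbits and the blow-up of $|s_\xi/c_\xi|$ near any non-critical boundary point to conclude that \emph{every} trajectory in $\Omega$ terminates at one of $u_{1,2}^+(v)$ or at $\{s=0\}$, with no hyperbolicity hypothesis. In Type~II-IV the only critical point on $\{c=c^+\}$ is the saddle-node, and your lower barrier $\sigma>s_\ast$ already excludes $\{s=0\}$; nothing more is needed. (Your upper bound $\sigma\le s_2^-(v_{\max})$ is therefore unnecessary, which is just as well, since the justification ``by monotonicity from the starting point'' presupposes global monotonicity of $\sigma$ that you have not yet established.)

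Similarly, Stage~1 is over-engineered. The paper obtains existence of $\kappa_{crit}$ immediately from the monotonicity of $\kappa(v)$ (Lemma~\ref{lemma7}), and positivity from the relation~\eqref{eq:delta_s_integral}: the left side $s_2^-(v)-s_1^+(v)$ stays bounded away from zero while the integral stays bounded, so $\kappa(v)$ cannot tend to zero. Dominated convergence, endpoint asymptotics, and centre-manifold behaviour play no role.
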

\begin{proof}
The limit $\kappa_{crit}$ exists since $\kappa(v)$ is monotone and non-negative. Similar to the previous lemma, we utilise the relation \eqref{eq:delta_s_integral}, but this time both $s_2^-(v_{\max}) - s_1^+(v_{\max})$ and the integral in the right part are bounded and separated from zero, thus $\kappa_{crit} > 0$. The trajectories depend monotonously on $\kappa$, thus for every $0 < \kappa < \kappa_{crit}$ the trajectory arriving at $u_1^+(v_{\max})$ must be lower, than the same trajectory for $\kappa_{crit}$. At the same time, due to Proposition \ref{prop1} (point~C) every such trajectory must begin either in $u_1^-(v_{\max}) = u_2^-(v_{\max})$, or at $s = 1$. Thus, all such trajectories begin in $u_1^-(v_{\max}) = u_2^-(v_{\max})$.
\end{proof}
Together Lemmas~\ref{lm:existence_kappa}--\ref{lm:v_max_type-2-3} give the proof of Theorem~\ref{Theorem2}. Finally, we can derive Theorem~\ref{Theorem1} from Theorem~\ref{Theorem2}.

\begin{proof}[Proof of Theorem~\ref{Theorem1}]
Since $\kappa(v)$ given by Theorem~\ref{Theorem2} is a continuous monotone function, there exists an inverse function $v(\kappa)$ for $\kappa\geqslant \kappa_{crit}$. By Lemma \ref{lm:v_max_type-2-3} for $\kappa\in(0,\kappa_{crit})$ and $v=v_{\max}$ there exists a trajectory from $u_2^-(v)$ to $u_1^+(v)$. Thus Theorem~\ref{Theorem1} follows from Theorem~\ref{Theorem2} for any $\kappa>0$.
\end{proof}

\section{Discussions and generalizations}
\label{sec:generalisations}

\subsection{Milder assumptions on the flow function}
The aim of the paper is to provide a clear proof for the simplest non-monotone dependence of flow function $f$ on concentration $c$. 
However, in practice assumptions (F1)-(F4) are too rigorous and usually do not hold for real life data. Initially we wanted to allow a lot of weaker inequalities and other degrees of freedom in assumptions on $f$:
\begin{enumerate}
    \item[(F2*)] $f_s(s, c)\geq 0$ for $0<s<1$, $0 \leq c \leq 1$;  $f_s(0,c)=f_s(1,c)=0$;
    \item[(F3*)] $f$ is S-shaped in $s$: for each $c \in [0,1]$ function $f(\cdot,c)$ has a point of inflection $s^I =  s^I(c) \in [0, 1]$, such that $f_{ss}(s, c)\geq 0$ for $0<s<s^I$ and $f_{ss}(s, c)\leq 0$ for $s^I<s<1$. 
    \item[(F4*)] $f$ is possibly non-monotone in $c$: $\forall s \in (0,1) \, \exists c^*(s) \in [0,1]$:
    \begin{itemize}
        \item $f_c(s, c)\leq 0$ for $0<s<1$, $0 < c < c^*(s)$;
        \item $f_c(s, c)\geq 0$ for $0<s<1$, $c^*(s) < c < 1$.
    \end{itemize}
\end{enumerate}
Sadly, these assumptions include some functions that break part of the assertions of Theorem~\ref{Theorem1}. Most notably, the monotone case is included under these weaker assumptions. Since such functions~$f$ could be obtained as a limit of functions satisfying (F1)-(F4), we believe that the ``limiting'' variant of Theorem~\ref{Theorem1} should hold for them, i.e. at worst 
$v(\kappa)$ might become non-strictly monotonous, or 
we might have $v_{\min} = v_{\max}$, and thus trivial dependence $v(\kappa) = const$.

Our best recommendation for functions $f$ from this wider class is to look at the phase portrait sequence. If the phase portrait sequence generated by $f$ is similar enough to the ones we considered above (i.e. it at least has a Type II portrait in it), then Theorem~\ref{Theorem1} will most likely hold.

Additionally, any changes to $f$ below the line $l(s) = (v_{\min} - \epsilon) \cdot (s + d_1)$ for some $\epsilon > 0$ do not perturb the important part of the phase portrait sequence, and thus will not break the assertions of Theorem~\ref{Theorem1}.

Finally, we believe that some milder variants of 
Theorem~\ref{Theorem1} hold under fewer assumptions on~$f$. Specifically, in future work we aim to get rid of the assumptions (F3)-(F4). Certainly, the monotonicity of $v(\kappa)$ will no longer hold in this case. But nevertheless, for some functions $f$ in this class we intend to obtain non-trivial dependence of $v$ on $\kappa$.

\subsection{Dissipative system with three parameters}
\label{subsec_6_2}
Another direction of investigation is to consider the dissipative system~\eqref{eq:main_system_dissipation} with three dimensionless groups $\varepsilon_c, \varepsilon_d$ and $\varepsilon_r$, and study the corresponding travelling wave dynamical system:
\begin{equation}
\begin{cases} 
A(s,c) s_\xi = f(s, c) - v (s + d_1), \\
\kappa_1 c_\xi = v(d_1 c - d_2 - \alpha),\\
\kappa_2 \kappa_1 \alpha_\xi = v^{-1}(\alpha-a(c)).
\end{cases}
\end{equation}
Here $\kappa_1=\varepsilon_d/\varepsilon_c$ and $\kappa_2=\varepsilon_r/\varepsilon_d$. The increased dimension of the dynamical system makes the analysis a bit more complicated in comparison to systems~\eqref{eq:dyn_sys_cap_non-eq} and~\eqref{eq:dyn_sys_cap_diff}. However, the last two equations are decoupled from the first one, and thus could be solved separately to reduce the dimension. The system 
\begin{equation*}
\begin{cases} 
 c_\xi = v(d_1 c - d_2 - \alpha),\\
\kappa_2 \alpha_\xi = v^{-1}(\alpha-a(c)).
\end{cases}
\end{equation*}
has two critical points $(0, a(0))$ (focus) and $(1, a(1))$ (saddle point), thus there is a unique solution $\alpha = \alpha(c)$ connecting them. We can substitute $\alpha(c)$ into the second equation and reduce the problem to the previous case: 
\begin{equation*}
\begin{cases} 
A(s,c) s_\xi = f(s, c) - v (s + d_1), \\
\kappa_1 c_\xi = v(d_1 c - d_2 - \alpha(c)).
\end{cases}
\end{equation*}
For a fixed $\kappa_2$ we get the same dependence of $v(\kappa_1)$ as in Theorem~\ref{Theorem1}. The dependence of $v$ on $\kappa_2$ could be the subject of further studies. We believe that $\alpha(c)$ depends monotonously on $\kappa_2$ and thus it should be possible to prove the monotonous dependence of $v$ on $\kappa_2$, but that requires more rigorous arguments. 

It would be interesting to consider a truly three-equation dynamical system (e.g. for a three-phase model including gas or relaxation time dependent on flux), but that is a much more complex problem that we do not know how to approach yet.

\subsection{General Riemann problem}
Note that we considered one particular Riemann problem $(1,1)\to(0,0)$ as we wanted to focus more on travelling wave solutions corresponding to a unique $c$-shock wave connecting the saddle points of the travelling wave dynamical system. We believe that for the non-monotone case the result analogous to~\cite{JohansenWinther} may be proved: any Riemann problem $(s^L, c^L)\to(s^R, c^R)$ has a unique solution for a fixed value of $\kappa$. For $c_L<c_R$ it is true as there are no $c$-shock waves; for $c_L>c_R$ a more careful analysis of admissible shocks is needed, since different Riemann problems might allow connections other than saddle to saddle to be compatible by speed.





\subsection{Stability and convergence}
When a Riemann problem solution is found, a lot of stability and convergence questions arise: 
\begin{itemize}
    \item Does the Riemann problem solution of the dissipative system converge to the non-dissipative solution as $\varepsilon_{c}\to 0$ with fixed $\kappa$ or $\kappa_1$, $\kappa_2$?
    \item Are undercompressive travelling wave solutions stable in the sense that any solution of a Cauchy problem for the dissipative system with the correct values at $\pm\infty$ tends to the admissible travelling wave as $t\to+\infty$?
\end{itemize}
There are similar results for a single equation (see \cite{MejaiVolpert}, \cite{Gasnikov}), for strictly hyperbolic systems of conservation laws (see \cite{GoodmanXin}) and for undercompressive shocks in certain hyperbolic systems of conservation laws (see \cite{LiuZumbrun}). Note that there are examples of unstable undercompressive shocks~\cite{GardnerZumbrun}. To our knowledge, for our systems these questions are still open even in the monotone case.
\begin{appendices}

\section{}
\label{ap:A}

\begin{proof}[Proof of Proposition \ref{prop1}]
A) Functions $f$ and $a$ are $C^2$-smooth, thus the right-hand side of the dynamical system is Lipschitz. The  point A thus follows from the Picard theorem and its corollaries.

B) The system is autonomous, so the point B follows from the uniqueness of the solution.

C) Note the following:
\begin{itemize}
    \item The right-hand side of the second equation is signed:
    \[
    d_1c - d_2 - a(c) < 0, \qquad \forall c\in(c^+, c^-),
    \]
    thus every trajectory goes strictly from right to left. This also implies the point D.
    \item The border lines $\{c=c^\pm\}$ also consist of orbits for the system and critical points. Since orbits cannot intersect, other orbits can only approach these border lines asymptotically as $\xi \to \pm\infty$. Consider a point $(s, c^+)$, at which the trajectory approaches the boundary as $\xi \to +\infty$. If $(s, c^+)$ is not critical, then $s_\xi/c_\xi$ is signed on this trajectory in the $\varepsilon$-vicinity of $(s, c^+)$ for some $\varepsilon>0$ and
    \[
    \left| \dfrac{s_\xi}{c_\xi} \right| > \dfrac{const}{c - c^+}, \quad c\in(c^+, c^+ + \varepsilon), 
    \]
    as in formula \eqref{eq:s_c_derivative} the enumerator is not zero and denominator is equivalent to $K(c-c^+)$ for $K=d_1-a'(c^+)\not=0$ due to strict concavity of $a$. This inequality means that $s$ cannot be bounded as $c\to c^+$ and leads to a contradiction. Thus, $(s,c^+)$ can only be a critical point.
\end{itemize}

E) Consider the dynamical system \eqref{eq:dyn_sys_cap_non-eq}. We recall the relation \eqref{eq:s_c_derivative} and rewrite it in the following way:
\[
\dfrac{s_\xi}{c_\xi}= \kappa\dfrac{d_1 (v^2( s + d_1) - vf(s, c))}{A(s,c) ( a(c) - d_1 c + d_2)} > 0.
\]
Since this expression is positive, it increases when $\kappa$ increases. And when $v$ increases, $v^2(s+d_1) - vf(s,c)$ increases, since
\[
\dfrac{d}{dv} \Big(v^2(s+d_1) - vf(s,c)\Big) = 2v(s+d_1) - f(s,c) > v(s+d_1) - f(s,c) > 0,
\] 
thus the slope increases when $v$ increases as well.

\end{proof}

\begin{remark}\label{remark_difference_between_systems}
In the case of the system \eqref{eq:dyn_sys_cap_diff} instead of $(v^2(s+d_1) - vf(s,c))$ we have $((s+d_1) - v^{-1} f(s,c))$ in the numerator of the relation \eqref{eq:s_c_derivative}. It also increases when $v$ increases, thus the proof of E does not change.
\end{remark}


\begin{proof}[Proof of Proposition \ref{prop_half_trajectories}]
A, B) The existence and the uniqueness of such trajectories follows immediately from the Hartman-Grobman theorem, as $u_2^-(v)$ and $u_1^+(v)$ are saddle points (see Remark~\ref{rm:critical_points}).

C) To obtain the continuous dependence on $\kappa$ and $v$, let us fix $c_0 \in (c^+, c^-)$. Denote by $s_1^{\kappa, v}(c)$ the trajectory arriving at $u_1^+(v)$ and by $s_\theta^{\kappa, v}(c)$ the trajectory going through $(\theta, c_0)$ for any $\theta \in (0,1)$. It is clear that due to Proposition~\ref{prop1} every trajectory $s_\theta^{\kappa, v}(c)$ for $\theta>s_1^{\kappa, v}(c_0)$ must end in $u_2^+(v)$, and every trajectory $s_\theta^{\kappa, v}(c)$ for $\theta<s_1^{\kappa, v}(c_0)$ must end in $\{s=0\}$. Thus, the trajectory entering $u_1^+(v)$ is the supremum of all trajectories below it. More rigorously, we write
\begin{equation}\label{s_1_as_supremum}
s_1^{\kappa, v}(c) = 
\sup \Big\{s_\theta^{\kappa,v}(c) : s_\theta^{\kappa, v} \text{ ends at the border } s=0  \Big\}.
\end{equation}
Every trajectory $s_\theta^{\kappa, v}(c)$ satisfies all the Picard theorem corollaries assumptions, and thus depend continuously on $\kappa$, $v$ and $\theta$. Thus, $s_1^{\kappa, v}(c)$, as a supremum, also depends continuously on $\kappa$ and $v$. 

The trajectory $s_1^{\kappa, v}(c)$ is an increasing function in the vicinity of the critical point $u_1^+(v)$, since any trajectory with a decreasing slope close enough to the border $\{c=c^+\}$ will end in $u_2^+(v)$.

Now, to obtain the monotonous dependence in the vicinity of the critical point, 
note that due to previous point $\frac{d}{dc} s_1^{\kappa, v}(c) = s_\xi/c_\xi > 0$ in some vicinity $(c^+, c^++\varepsilon)$, $\varepsilon>0$. For this part of the proof we will assume $c_0\in(c^+, c^++\varepsilon)$. Then for all trajectories $s_\theta^{\kappa, v}(c)$ below $s_1^{\kappa, v}(c)$ we also have $\frac{d}{dc} s_\theta^{\kappa, v}(c) > 0$ on $(c^+, c^++\varepsilon)$.
Due to Proposition~\ref{prop1} (point~E), when $\kappa$ or $v$ increases, the slope of all $s_\theta^{\kappa, v}(c)$ below $s_1^{\kappa,v}(c)$ increases. Therefore, the set $\{s_\theta^{\kappa,v}(c) : s_\theta^{\kappa, v} \text{ ends at the border } s=0  \}$ can only expand, and the supremum in \eqref{s_1_as_supremum} increases.

Similar arguments give the proof for the trajectories leaving $u_2^-(v)$.

D) If it's not monotone, then it could be crossed three times for some value of $s$. More rigorously, let there be a point $(s_0, c_0)$ such that the trajectory $s(c) = s^{\kappa, v}(c)$ connecting $u_2^-(v)$ and $u_1^+(v)$ is not monotone:
\[
s'(c_0) = \kappa \dfrac{(v d_1) (f(s_0, c_0) - v( s_0 + d_1))}{A(s_0,c_0) (d_1 c_0 - d_2 - a(c_0))} < 0.
\]
Then $s$ takes the value $s_0$ at least two more times (before and after $c_0$) and has a non-negative derivative at those points. Thus, for some $c_1 < c_0 < c_2$ we have
\[
f(s_0, c_0) - v( s_0 + d_1)) > 0,
\]
\[
f(s_0, c_{1,2}) - v( s_0 + d_1)) \leqslant 0,
\]
which contradicts the property (F4) of $f$ (see Section \ref{subsec:restrictions}).

Since the function is monotone, we conclude that it's slopes are non-negative. All that's left is to note, that if a trajectory with non-negative slopes touches the nullcline (the black curve) inside $\Omega$ on any portrait, it must cross it. Thus, the slopes cannot be zero and therefore are positive on $(c^+, c^-)$.
\end{proof}

\end{appendices}

\section*{Acknowledgements}

We are grateful to Pavel Bedrikovetsky and Sergey Tikhomirov for fruitful discussions. Research is supported by Russian Science Foundation grant 19-71-30002.

\end{document}